\title[Intersection de Rham complexes in positive characteristic]{Intersection de Rham complexes in positive characteristic}
\author[Mao Sheng]{Mao Sheng}
\email{msheng@ustc.edu.cn}
\address{School of Mathematical Sciences,
	University of Science and Technology of China, Hefei, 230026, China}
\author[Zebao Zhang]{Zebao Zhang}
\email{zhangzebao@bicmr.pku.edu.cn}
\address{Peking University, Beijing International Center for Mathematical Research,
	Beijing, 100871, China}
\begin{document}
	\theoremstyle{plain}
	\newtheorem{thm}{Theorem}[section]
	\newtheorem{theorem}[thm]{Theorem}
	\newtheorem*{theorem*}{Theorem}
	\newtheorem*{theoremA*}{Theorem A}
	\newtheorem*{theoremB*}{Theorem B}
	\newtheorem*{theoremC*}{Theorem C}
	\newtheorem*{definition*}{Definition}
	\newtheorem{lemma}[thm]{Lemma}
	\newtheorem{sublemma}[thm]{Sublemma}
	\newtheorem{corollary}[thm]{Corollary}
	\newtheorem*{corollary*}{Corollary}
	\newtheorem{proposition}[thm]{Proposition}
	\newtheorem{addendum}[thm]{Addendum}
	\newtheorem{variant}[thm]{Variant}
	\theoremstyle{definition}
	\newtheorem{lemma-definition}[thm]{Lemma-Definition}
	\newtheorem{proposition-definition}[thm]{Proposition-Definition}
	\newtheorem{theorem-definition}[thm]{Theorem-Definition}
	\newtheorem{construction}[thm]{Construction}
	\newtheorem{notations}[thm]{Notations}
	\newtheorem{question}[thm]{Question}
	\newtheorem{problem}[thm]{Problem}
	\newtheorem*{problem*}{Problem}
	\newtheorem{remark}[thm]{Remark}
	\newtheorem*{remark*}{Remark}
	\newtheorem{remarks}[thm]{Remarks}
	\newtheorem{definition}[thm]{Definition}
	\newtheorem{claim}[thm]{Claim}
	\newtheorem{assumption}[thm]{Assumption}
	\newtheorem{assumptions}[thm]{Assumptions}
	\newtheorem{properties}[thm]{Properties}
	\newtheorem{example}[thm]{Example}
	\newtheorem{conjecture}[thm]{Conjecture}
	\numberwithin{equation}{thm}
	
	\newcommand{\sA}{{\mathcal A}}
	\newcommand{\sB}{{\mathcal B}}
	\newcommand{\sC}{{\mathcal C}}
	\newcommand{\sD}{{\mathcal D}}
	\newcommand{\sE}{{\mathcal E}}
	\newcommand{\sF}{{\mathcal F}}
	\newcommand{\sG}{{\mathcal G}}
	\newcommand{\sH}{{\mathcal H}}
	\newcommand{\sI}{{\mathcal I}}
	\newcommand{\sJ}{{\mathcal J}}
	\newcommand{\sK}{{\mathcal K}}
	\newcommand{\sL}{{\mathcal L}}
	\newcommand{\sM}{{\mathcal M}}
	\newcommand{\sN}{{\mathcal N}}
	\newcommand{\sO}{{\mathcal O}}
	\newcommand{\sP}{{\mathcal P}}
	\newcommand{\sQ}{{\mathcal Q}}
	\newcommand{\sR}{{\mathcal R}}
	\newcommand{\sS}{{\mathcal S}}
	\newcommand{\sT}{{\mathcal T}}
	\newcommand{\sU}{{\mathcal U}}
	\newcommand{\sV}{{\mathcal V}}
	\newcommand{\sW}{{\mathcal W}}
	\newcommand{\sX}{{\mathcal X}}
	\newcommand{\sY}{{\mathcal Y}}
	\newcommand{\sZ}{{\mathcal Z}}
	\newcommand{\A}{{\mathbb A}}
	\newcommand{\B}{{\mathbb B}}
	\newcommand{\C}{{\mathbb C}}
	\newcommand{\D}{{\mathbb D}}
	\newcommand{\E}{{\mathbb E}}
	\newcommand{\F}{{\mathbb F}}
	\newcommand{\G}{{\mathbb G}}
	\newcommand{\HH}{{\mathbb H}}
	\newcommand{\I}{{\mathbb I}}
	\newcommand{\J}{{\mathbb J}}
	\renewcommand{\L}{{\mathbb L}}
	\newcommand{\M}{{\mathbb M}}
	\newcommand{\N}{{\mathbb N}}
	\renewcommand{\P}{{\mathbb P}}
	\newcommand{\Q}{{\mathbb Q}}
	\newcommand{\R}{{\mathbb R}}
	\newcommand{\SSS}{{\mathbb S}}
	\newcommand{\T}{{\mathbb T}}
	\newcommand{\U}{{\mathbb U}}
	\newcommand{\V}{{\mathbb V}}
	\newcommand{\W}{{\mathbb W}}
	\newcommand{\X}{{\mathbb X}}
	\newcommand{\Y}{{\mathbb Y}}
	\newcommand{\Z}{{\mathbb Z}}
	\newcommand{\id}{{\rm id}}
	\newcommand{\rank}{{\rm rank}}
	\newcommand{\END}{{\mathbb E}{\rm nd}}
	\newcommand{\End}{{\rm End}}
	\newcommand{\Hom}{{\rm Hom}}
	\newcommand{\Hg}{{\rm Hg}}
	\newcommand{\tr}{{\rm tr}}
	\newcommand{\Sl}{{\rm Sl}}
	\newcommand{\Gl}{{\rm Gl}}
	\newcommand{\Cor}{{\rm Cor}}
	\newcommand{\Aut}{\mathrm{Aut}}
	\newcommand{\Sym}{\mathrm{Sym}}
	\newcommand{\ModuliCY}{\mathfrak{M}_{CY}}
	\newcommand{\HyperCY}{\mathfrak{H}_{CY}}
	\newcommand{\ModuliAR}{\mathfrak{M}_{AR}}
	\newcommand{\Modulione}{\mathfrak{M}_{1,n+3}}
	\newcommand{\Modulin}{\mathfrak{M}_{n,n+3}}
	\newcommand{\Gal}{\mathrm{Gal}}
	\newcommand{\Spec}{\mathrm{Spec}}
	\newcommand{\res}{\mathrm{res}}
	\newcommand{\coker}{\mathrm{coker}}
	\newcommand{\Jac}{\mathrm{Jac}}
	\newcommand{\HIG}{\mathrm{HIG}}
	\newcommand{\MIC}{\mathrm{MIC}}
	
	\thanks{The work is supported by National Key Research and Development Project SQ2020YFA070080, CAS Project for Young Scientists in Basic Research Grant No. YSBR-032, National Natural Science Foundation of China (Grant No. 11721101), Fundamental Research Funds for the Central Universities.}
	\maketitle
	\centerline{{\itshape Dedicated to Professor Shing-Tung Yau on the occasion of his 70th birthday}}

	\begin{abstract}
		We establish a positive characteristic analogue of intersection cohomology theory for variations of Hodge structure. It includes: a) the de Rham-Higgs comparison theorem for the intersection de Rham complex; b) the $E_1$-degeneration theorem for the intersection de Rham complex of a periodic de Rham bundle; c) the Kodaira-Saito vanishing theorem for the intersection cohomology groups of a periodic Higgs bundle. These results generalize the decomposition theorem of Deligne-Illusie \cite{DI} and the de Rham-Higgs theorem of Ogus-Vologodsky \cite{OV}, the $E_1$-degneration theorem of Deligne-Illusie \cite{DI}, Illusie \cite{IL90}, Faltings \cite{Fa} and the Kodaira-Saito vanishing theorem of Arapura \cite{Arapura}. As an application, we give an algebraic proof of the $E_1$-degeneration theorem due to Cattani-Kaplan-Schmid \cite{CKS} and Kashiwara-Kawai \cite{KK}, and the vanishing theorem of Saito \cite{Saito} for VHSs of geometric origin.
	\end{abstract}
	
	\tableofcontents
	
	\section{Introduction}
	This work was motivated by Gabber's purity theorem for pure lisse $\bar{\Q}_l$-sheaves \cite{BBD} and its complex counterpart for variations of Hodge structure \cite{Zuc,CKS,KK}. We searched for such an analogue for crystals in positive characteristic. In this paper, we have obtained some new results in this direction.
	
	In their foundational work in nonabelian Hodge theory in positive characteristic, Ogus-Vologodsky \cite[Corollary 2.27]{OV} established the following theorem:
	\begin{theorem}[Ogus-Vologodsky]\label{equivalence}
		Let $k$ be a perfect field of positive characteristic $p$.  Let $X$ be a smooth variety over $k$. Suppose $X$ is $W_2(k)$-liftable. Let $C$ be the Cartier transform from the category of $D^{\gamma}_X$-modules to the category of $\hat\Gamma_{\cdot}T_{X'}$-modules with respect to a chosen $W_2(k)$-lifting of $X$. Let $(H,\nabla)$ be a module with integrable connection over $X$ whose $p$-curvature is nilpotent of level $\ell\leq p-1$. Then there is an isomorphism in the derived category $D(X')$:
		$$
		F_{\ast}\tau_{<p-\ell}\Omega^{*}(H,\nabla)\cong \tau_{<p-\ell} \Omega^{*}(C(H,\nabla)),
		$$
		where $X'=X\times_{\sigma,k}k$, $\sigma$ is the Frobenius automorphism of $k$, $F: X\to X'$ is the relative Frobenius, $\Omega^{*}(H,\nabla)$ is the de Rham complex attached to $(H,\nabla)$, $\Omega^{*}(C(H,\nabla))$ is the Higgs complex attached to the Higgs module $C(H,\nabla)$.
	\end{theorem}
	It generalizes vastly the fundamental decomposition theorem of Deligne-Illusie \cite{DI} that is the statement for the de Rham object $(\sO_{X},d)$.  We shall call it the \emph{de Rham-Higgs comparison theorem for de Rham complexes in positive characteristic}. In a preprint \cite{Schepler}, Schepler extends the above result to the logarithmic setting. For the Gau{\ss}-Manin bundle associated to a semistable family over $k$, which is $W_2(k)$-liftable, the de Rham-Higgs comparison theorem of Ogus-Vologodsky was first established by Illusie \cite{IL90}.  Moreover, he proved the $E_1$-degeneration property and a relative Kodaira type vanishing theorem (that has been recently generalized to general coefficients \cite{Arapura}). In our work, we shall establish the ``intersection'' version of all these results.
	
	Our starting point is the observation that the intersection complex attached to a VHS $\V$ with unipotent local monodromies, first introduced in \cite[\S4.3]{KK} and expounded in \cite[Ch. 8]{CEGT}, can be rephrased in the Zariski topology if the underlying compact K\"{a}hler manifold is algebraic. Let $S$ be a regular locally Noetherian scheme $S$ and $X$ is a smooth $S$-scheme of finite relative dimension. Let $D$ be a reduced $S$-relative normal crossing divisor in $X$. For a flat logarithmic relative $\lambda$-connection $(E,\nabla)$ with $\lambda\in \Gamma(S,\sO_S)$ and simple pole of $\nabla$ along $D$, we introduce the notion of \emph{intersection $\lambda$-complex} associated to $(E,\nabla)$ which is denoted as $\Omega^*_{int}(E,\nabla)$. For $S=\Spec(\C)$ and $X$ smooth projective over $\C$, the complex analytification of the intersection $\lambda$-complex associated to the Deligne's canonical extension of a VHS with unipotent local monodromy $\V$ coincides with the intersection complex of Kashiwara-Kawai (see Lemma \ref{two def equ}). As a matter of convention, we call an intersection $\lambda$-complex for $\lambda=1$ (resp. $\lambda=0$) an \emph{intersection de Rham (resp. Higgs) complex}.
	
	Now we come back to the characteristic $p$ situation as we started. Our first main result is the following
	\begin{theoremA*}[Corollary \ref{dec thm}]\label{thm A}
		Let $X$ and $(H,\nabla)$ be as Theorem \ref{equivalence}. One has an isomorphism in $D(X')$:
		$$
		F_{\ast}\tau_{<p-\ell}\Omega_{int}^{*}(H,\nabla)\cong \tau_{<p-\ell} \Omega_{int}^{*}(C(H,\nabla)),
		$$
	\end{theoremA*}
	The above theorem in the case of $D=\emptyset$ is nothing but Theorem \ref{equivalence}. However, the method of its proof is different from that of Ogus-Vologodsky \cite{OV}. Rather in the spirit of Deligne-Illusie \cite{DI}, we construct a quasi-isomorphism between $\tau_{<p-\ell} \Omega^{*}(C(H,\nabla))$ and $\tau_{<p-\ell}\mathcal{C}(\mathcal{U}',F_{\ast}\Omega^{*}(H,\nabla))$ which also induces a quasi-isomorphism on intersection $\lambda$-complexes (which are subcomplexes of logarithmic $\lambda$-complexes). So our method gives another proof of Theorem \ref{equivalence} and its logarithmic generalization due to Schepler \cite{Schepler}.
	
	Our next main result concerns the $E_1$-degeneration property of intersection de Rham complexes attached to \emph{periodic de Rham bundles}, whose Higgs counterpart was first introduced in \cite{LSZ1}. We remark that a logarithmic Gau{\ss}-Manin bundle attached to a semistable family, as considered in \cite{IL90}, is one-periodic (Proposition \ref{one-periodic}).  Analogous to the role played by polarization in the $E_1$-degeneration theorem due to Cattani-Kaplan-Schmid and Kashiwara-Kawai for VHS (see \cite[Theorem 8.3.25]{CEGT}), periodicity is essential to the following analogue in positive characteristic.
	\begin{theoremB*}[Corollary \ref{E_1 degeneration}]\label{thm B}
		Let $X$ be as  in Theorem \ref{equivalence} and assume additionally that $X$ is proper over $k$. For a periodic de Rham bundle $(H,\nabla,Fil)$ satisfying $\dim X+\rank(H)\leq p$, the spectral sequence associated to the filtered complex $\Omega_{int}^*(H,\nabla)$ degenerates at $E_1$.
	\end{theoremB*}
	A more precise form of the above theorem for one-periodic de Rham bundles appears in \S 4 (Theorem \ref{E_1 deg}). Besides the de Rham-Higgs comparison theorem, the other key ingredient to the $E_1$-degeneration theorem is the \emph{intersection adaptedness theorem} \footnote{The name is after the $L^2$ adaptedness theorem of S. Zucker \cite{Zuc}.} for periodic de Rham bundles (Theorem \ref{inter comm}).
	
	A good analogue in positive characteristic of the notion of variation of Hodge structures with unipotent local monodromies is the notion of strict $p$-torsion logarithmic Fontaine module, as introduced in \cite[Chapter IV, Section c)]{Fa} \footnote{In loc. cit., a smooth scheme $X_{W(k)}$ over $W(k)$ and a $W(k)$-relative simple normal crossing divisor $D_{W(k)}\subset X_{W(k)}$ are assumed. But the notion itself requires only a $W_2(k)$-lifting of the pair $(X',D')$, as first observed by Ogus-Vologodsky \cite[Definition 4.6]{OV} when $D=\emptyset$. See \S4.1 for detail.}. Theorem A and Theorem \ref{E_1 deg} combined together lead to the following structural result:
	\begin{corollary*}[Corollary \ref{FL module}]
		Let $X$ be as in Theorem B. Let $(H,\nabla, Fil, \Phi)$ be a strict $p$-torsion logarithmic Fontaine module over $S$ of Hodge-Tate weight $w$ with $w+\dim X\leq p-1$. Then for each $i$,  the natural inclusion of complexes $\Omega_{int}^*(H,\nabla)\to \Omega^*(H,\nabla)$ induces a morphism of strict $p$-torsion Fontaine-Laffaille modules \cite{FL}:
		$$
		\HH^i(X,\Omega_{int}^*(H,\nabla))\to \HH^i(X,\Omega^*(H,\nabla)).
		$$
	\end{corollary*}
	\begin{remark*}
		The corollary is in a good analogue to the classical result on the cohomology groups of VHS: let $\V$ be a variation of Hodge structure defined over $U=X-D$ where $X$ is a compact K\"{a}hler manifold and $D$ a reduced normal crossing divisor. Then for each $i$, the natural morphism of cohomology groups $IH^i(X,\V)\to H^i(U,\V)$ is a morphism of mixed Hodge structures.
	\end{remark*}
	Our last main result is a Kodaira-Saito vanishing theorem in positive characteristic.
	\begin{theoremC*}[Corollary \ref{vanishing result}]\label{thm C}
		Let $X$ be as in Theorem B and assume additionally that $X$ is projective over $k$. Let $(E,\theta)$ be a periodic Higgs bundle over $X$. If $\dim X+\rank(E)\leq p$, then the following holds
		$$
		\HH^i(X,\Omega_{int}^*(E,\theta)\otimes L)=0, \quad i>\dim X,
		$$
		for any ample line bundle $L$ on $X$.
	\end{theoremC*}
	The relation of the above theorem with \cite[Theorem 1 (ii)]{Arapura} is as follows: by \cite[Theorem 1.5]{LSZ}, a semistable Higgs bundle with vanishing Chern classes and with rank $\leq p$ is preperiodic. So our vanishing result gives the intersection analogue of Arapura's vanishing theorem for periodic Higgs bundles. We do not know how to extend the intersection adaptedness theorem to a general preperiodic de Rham bundle.
	
	Together with the results of Illusie \cite{IL90}, Theorems A, B, C shall form a useful package for the study of $W_2$-liftable semistable families in positive characteristic. Using the technique of spreading-out, we give an algebraic proof of the $E_1$-degeneration theorem of Cattani-Kaplan-Schmid \cite{CKS} and Kashiwara-Kawai \cite{KK}, and the vanishing theorem of Saito \cite{Saito} for a VHS of geometric origin. It is a meaningful problem to ask for the intersection analogue of $E_1$-degeneration theorem of Scholze (\cite[Theorem 1.6]{Scholze}, and its logarithmic analogue by Diao-Lan-Liu-Zhu  \cite[Theorem 1.7]{DLLZ}) in the rigid analytic setting.
	
	The structure of the paper is outlined as follows. In Section 2, we introduce the central notion of the article: intersection $\lambda$-complex. In Section 3, we establish the de Rham-Higgs comparison theorem for the intersection de Rham complex in positive characteristic. In Section 4, we establish the $E_1$-degeneration theorem for the intersection de Rham complex attached to a periodic de Rham bundle. In Section 5, we give some applications of the previous theory in characteristic zero. In Appendix, we construct an $\infty$-homotopy which is essential in our approach to the de Rham-Higgs comparison theorem.  
	
	{\bf Acknowledgement:} We would like to express our appreciation to Professor Arthur Ogus and Professor H\'{e}l\`{e}ne Esnault for valuable advices and comments on the paper.  We would like also to thank Professor Kang Zuo and Professor Luc Illusie for their interest and encouragement. Last but not least, we want to thank the members of Algebraic Geometry team at USTC and Professor Jilong Tong for numerous discussions.
	
	\section{Intersection $\lambda$-complexes}
	Let $S$ be a regular locally noetherian scheme and $\alpha: X\to S$ a smooth morphism of relative dimension $n$, and let $D\subset X$ be a reduced normal crossing divisor relative to $S$ (abbreviated as NCD in below).  Let $\lambda\in \Gamma(S, \sO_S)$, and $E$ be an $\sO_X$-module, equipped with a flat relative logarithmic $\lambda$-connection $\nabla$ with pole along $D$. Recall that a relative logarithmic $\lambda$-connection is an $S$-linear additive morphism
	$$
	\nabla: E\to E\otimes\Omega_{X/S}(\log D),
	$$
	satisfying the $\lambda$-Leibniz rule
	$$
	\nabla(fs)=s\otimes\lambda df+f\nabla(s),~f\in \sO_X,~s\in E.
	$$
	For simplicity, we denote $\Omega_{X/S}(\log D)$ by $\Omega_{X_{\log}/S}$ \footnote{There is a notion of sheaf of relative K\"{a}hler differentials in log geometry (\cite[\S1.7]{Kato}). In our case, we equip $X$ with the log structure determined by $D$ and $S$ with the trivial log structure (\S1.5 loc. cit.). Then we may extend the structural morphism $\alpha$ to a  morphism of log schemes $X_{\log}\to S$ which is smooth (\S3 loc. cit.).}, and speak of a $\lambda$-connection over $X_{\log}/S$, which means a relative logarithmic $\lambda$-connection with pole along $D$ as above. One forms the $\lambda$-complex $\Omega^*(E,\nabla)$ attached to $(E,\nabla)$ as follows:
	$$
	E\stackrel{\nabla}{\longrightarrow}E\otimes\Omega_{X_{\log}/S} \stackrel{\nabla}{\longrightarrow}E\otimes\Omega^2_{X_{\log}/S}\stackrel{\nabla}{\longrightarrow}\cdots,
	$$
	where $\Omega^i_{X_{\log}/S}=\bigwedge^i\Omega_{X_{\log}/S},i>0$ and
	$$
	\nabla(s\otimes\omega)=s\otimes\lambda d\omega+\nabla(s)\wedge\omega,~s\in E,~\omega\in \Omega^*_{X_{\log}/S}.
	$$
	Note that the above complex is naturally a graded module over the graded algebra $\Omega^{\bullet}_{X_{\log}/S}:=\bigoplus_{i=0}^{n}\Omega^i_{X_{\log}/S}$, whose module structure is determined by sending $\eta\in \Omega^i_{X_{\log}/S}$ and $e\otimes \omega\in E\otimes \Omega^j_{X_{\log}/S}$ to $e\otimes \omega\wedge \eta\in E\otimes \Omega^{i+j}_{X_{\log}/S}$. Since there is a natural morphism of graded algebras
	$$
	\Omega^{\bullet}_{X/S}:=\bigoplus_{i=0}^{n}\Omega^i_{X/S}\to \Omega^{\bullet}_{X_{\log}/S},
	$$
	we may regard $\Omega^{\bullet}(E,\nabla):=\bigoplus_{i=0}^nE\otimes \Omega^i_{X_{\log}/S}$ as a graded module over $\Omega^{\bullet}_{X/S}$. The main aim of this section is to construct the intersection $\lambda$-complex for $(E,\nabla)$, which is a subcomplex of $\Omega^*(E,\nabla)$ in the Zariski topology.  It will be achieved first in the simple normal crossing divisor (abbreviated as SNCD in below) case, and then in the NCD case using the technique of faithfully flat descent (\cite[VIII 1.]{SGA 1}).
	\subsection{The construction}
	Let $X$ a smooth projective variety over $\C$ and $D\subset X$ a NCD. Let $j: U\to X$ be the open immersion of the complement of $D$. For a polarizable variation of Hodge structure $\V$ (with unipotent local monodromies) over the complex manifold $U^{an}$, Kashiwara-Kawai \cite{KK86} constructed a resolution for the intermediate extension $j_{!*}\V_{\C}$, which is a certain subcomplex of the analytic logarithmic de Rham complex attached to the Deligne's canonical extension of $\V\otimes \sO_{U^{an}}$. It gives rise to an explicit construction of the Hodge filtration on the intersection cohomology group $H^i(X^{an},j_{!*}\V_{\C})$. This beautiful construction motivated us to consider its analogue in characteristic $p$. It turns out that such a construction does exist in a purely algebraic setting. As seen in later sections, it behaves well in the context of the nonabelian Hodge theory in positive characteristic. Before carrying out the general construction, let us explain the case of curves. Let $S, X, D, (E,\nabla)$ be as in the beginning of the section and $\dim_SX=1$. The intersection $\lambda$-complex for $(E,\nabla)$ reads
	$$
	E\stackrel{\nabla}{\longrightarrow}\nabla(E)+E\otimes\Omega_{X/S},
	$$
	which is a subcomplex of $E\stackrel{\nabla}{\longrightarrow}E\otimes \Omega_{X/S}(\log D)$, the $\lambda$-complex for $(E,\nabla)$.

	Recall that a Weil divisor $D\subset X$ is said to be simple normal crossing, if $D=\sum_iD_i$ with $D_i$ integral and regular, and different components intersect transversally. For a reduced NCD $D$, there is an \'{e}tale covering $X'\to X$ such that $D'=D\times_XX'$ is simple normal crossing. In our construction, it is essential to make a good choice of local coordinates. Let $D$ be a SNCD. Let $\A^n_S$ be endowed with the standard log structure given by the union $E_n$ of all coordinate hyperplanes. Then over a given point $x\in X$, one finds an open neighborhood $U\subset X$ together with an \'{e}tale and log \'{e}tale  morphism over $S$
	$$
	t=(t_1,\cdots,t_n): U\to \A^n_S,
	$$
	with $t^{-1}E_n=D$. We call $t$ an \emph{admissible system of local coordinates} on $U$.  For such a system of local coordinates on $U$,  the following conditions are satisfied:
	\begin{itemize}
		\item [(i)]  $\Omega_{U/S}$  admits an $\sO_U$-basis $\{dt_1,\cdots,dt_n\}$;
		\item [(ii)]  $\Omega_{U_{\log}/S}$  admits an $\sO_U$-basis  $\{\omega_1=d\log t_1,\cdots,\omega_n=d\log t_n\}$.
	\end{itemize}
	Clearly, for an \'{e}tale morphism $V\to U$, an admissible system of local coordinates $t$ on $U$ pulls back to an admissible system of local coordinates on $V$. For another admissible system of local coordinates $t'=(t'_1,\cdots,t'_n)$, shrinking $U$ if necessary to a smaller open neighborhood of $x$, we may find units $u_i$ such that, up to a permutation of indices,
	\begin{eqnarray}\label{system unique}
		t_i'=u_it_i.
	\end{eqnarray}
	We shall work with this type of local coordinates throughout the paper.

	Let $(E_U,\nabla_U)$ be the restriction of $(E,\nabla)$ to the open log subscheme $U_{\log}$. We may write the operator $\nabla_U$ into a sum of components
	$$\nabla_U=\sum_{1\leq i\leq n}\nabla_{U,i}\otimes\omega_i,$$
	with $\nabla_{U,i}\in \End_S(E_U)$. For  $\emptyset\neq I=\{i_1,\cdots,i_l\},i_1<\cdots<i_l$, set
	$$
	\nabla_{U,I}=\nabla_{U,i_1}\circ\cdots\circ \nabla_{U,i_l}, \  \omega_I=\omega_{i_1}\wedge\cdots\wedge\omega_{i_l},~d_It=dt_{i_1}\wedge\cdots\wedge dt_{i_l},~t_I=\prod_{i\in I}t_i.
	$$
	Set $\nabla_{U,\emptyset}=Id,\omega_{\emptyset}=d_\emptyset t=t_\emptyset=1\in \sO_U$.

	\begin{lemma}\label{independence of special local coord}
		Let $D$ be a SNCD and $U\subset X$ be an open subset such that admissible systems of local coordinates on $U$ exist.  Then the $\Omega^{\bullet}_{U/S}$-graded submodule ${R}_{t}$ of $\Omega^{\bullet}(E_U,\nabla_U)$, generated by the abelian subsheaf
		$$
		\sum_{I\subset \{1,\cdots,n\}}\nabla_{U,I}E_{U}\otimes \omega_I
		$$
		is independent of the choices of admissible systems of local coordinates on $U$. That is, for another admissible system of local coordinates $t'$ on $U$, ${R}_{t}={R}_{t'}$. Furthermore, the submodule $R_{t}$ is closed under the differential of the complex $\Omega^{*}(E_U,\nabla_U)$.
	\end{lemma}
	\begin{proof}
		 By symmetry, it suffices to show $R_{t'}\subset R_{t}$.  For every $x\in U$, we may find an open subset of $U$, such that the equality \ref{system unique} holds. Write
		$$
		\nabla_U=\sum_{1\leq i\leq n}\nabla'_{U,i}\otimes\omega_i',
		$$
		where $\omega_i'=d\log t'_i$. It suffices to show the following relation:
		\begin{eqnarray}\label{nabla' belong}
			\nabla'_{U,I}(e)\otimes\omega_I'\in R_{t},~I\subset \{1,\cdots,n\},~e\in E_U.
		\end{eqnarray}
		To this purpose, we need to rewrite $\nabla'_{U,I}$ (resp. $\omega_I'$) in terms of $\nabla_{U,J}$s (resp. $\omega_J$s) with $J\subset I$. To simply the notation, we drop the subscript $U$ in $\nabla_{U,i}$ etc.. Since $\omega_{i}=\omega'_{i}+u_{i}^{-1}du_{i}$, it follows that
		\begin{eqnarray}\label{relation nabla nabla'}\nabla'_i=\nabla_i+t_i\vartheta_i,~\vartheta_i=\sum_{1\leq j\leq n}b_j\nabla_j,~b_{j}\in \mathcal{O}_{U}.\end{eqnarray}
		It is easy to see that
		$$
		\omega_I'=\sum_{J\subset I}\omega_{J}\wedge\beta_{J},~\beta_{J}\in\Omega^{|I-J|}_{U/S}.
		$$
		To get an expression for $\nabla'_{I}$, we introduce some notations. Set for $1\leq i\leq n$, $\nabla^{0}_{i}=Id$ and for an $\alpha=(\alpha_1,\cdots,\alpha_n)\in \N^n$,
		$$
		\nabla^\alpha=\prod_{1\leq i\leq n}\nabla^{\alpha_i}_{i}.
		$$
		Further for any $m\geq0$, we set
		$$
		\Theta^m=\sum_{\alpha\in\mathbb{N}^n,|\alpha|\leq m}\Gamma(U,\mathcal{O}_U)\nabla^\alpha\subset\mathrm{End}_S(E_U)
		,$$
		and $\Theta=\bigcup_{m\geq0}\Theta^m$, the $\sO_U$-subalgebra generated by $\nabla_i$s. For any $I\subset\{1,\cdots,n\}$, we set
		$$
		\Theta_I=\sum_{J\subset I}t_{I-J}\nabla_{J}\Theta.
		$$
		We claim that $\Theta$ admits the following properties:
		\begin{itemize}
			\item[(i)\ ] $\Theta$ is a subring;
			\item[(ii)\ ] $\Theta t_i=t_i\Theta$;
			\item[(iii)\ ]$\Theta\nabla_{i}\subset \nabla_{i}\Theta+t_i\Theta$;
			\item[(iv)\ ] $\Theta_I\Theta_J=\Theta_{I+J}$ for $I,J\subset\{1,\cdots,n\}$ subject to the condition $I\cap J=\emptyset$.
		\end{itemize}
		Using \eqref{relation nabla nabla'} and (iv), we obtain
		\begin{eqnarray}\label{nabla' I}
			\nabla'_{I}=\sum_{J\subset I}t_{I-J}\nabla_{J}\vartheta_J,~\vartheta_J\in\Theta.
		\end{eqnarray}
		Now the requested \eqref{nabla' belong} follows from the following explicit expression:
		$$
		\begin{array}{c}\nabla'_{I}(e)\otimes\omega'_{I}=\sum_{J\subset I}t_{I-J}\nabla_{J}\vartheta_{J}(e)\otimes\sum_{J\subset I}\omega_{J}\wedge\beta_{J}=\\
			\sum_{J_{1},J_{2}\subset I}\nabla_{J_{1}\cap J_{2}}(\nabla_{J_1-J_{2}}\vartheta_{J_{1}}e)\otimes\omega_{J_{1}\cap J_{2}}\wedge t_{I-J_{1}-J_{2}}d_{J_{2}-J_{1}}t\wedge\beta_{J_{2}},
		\end{array}
		$$
		which is clearly an element in $R_{t}$. It remains to verify (i)-(iv): note for any $f\in \mathcal{O}_U$, it holds that
		\begin{eqnarray}\label{observation nabla}
			\nabla_{i}f=f\nabla_{i}+t_i\frac{\partial f}{\partial t_i}.
		\end{eqnarray}
		By induction on $m$, it follows that for $m>0$
		\begin{eqnarray}\label{Theta power}
			\Theta^m=(\Theta^1)^m.
		\end{eqnarray}
		This proves (i). Moreover, using \eqref{observation nabla}, we get $$\Theta^1t_i=t_i\Theta^1,$$
		which implies (ii) in conjunction with \eqref{Theta power}. (iii) is checked in a similar way. For (iv), it suffices to take $J=\{j\},j\notin I$ and then use (i)-(iii).
		
		Finally, $R_{t}$ is closed under the differential, because the action of the $\lambda$-connection $\nabla$ on an element $\nabla_{I}(e)\otimes \omega_I$ lies again in $R_{t}$, as shown in the formula:
		$$
		\nabla(\nabla_{I}(e)\otimes \omega_I)=\sum_{i\notin I}\pm\nabla_{I\cup\{i\}}(e)\otimes \omega_{I\cup\{i\}}.
		$$
		The lemma is proved.
	\end{proof}
	Let us make a further investigation into the structure of $R_{t}$. For any $w\in\mathbb{N}^{n}$, we define an $S$-submodule $E_w\subset E_U$ as follows:
	$$
	E_{w}=\sum_{\alpha+\beta=w,\alpha,\beta\in\mathbb{N}^{n}}t^{\alpha}\nabla^{\beta}E_U,
	$$
	where $t^{\alpha}=\prod_{i=1}^{n}t^{\alpha_{i}}_{i},\nabla^{\beta}=\prod_{i=1}^{n}\nabla^{\beta_{i}}_{i}$ for $\alpha=(\alpha_{1},\cdots,\alpha_{n}),\beta=(\beta_{1},\cdots,\beta_{n})$. We call a multi-index $w$ \emph{simple} if it is of the following form for some $I\subset\{1,\cdots,n\}$:
	$$w^I=(\delta^I_1,\cdots,\delta^I_n)\in\mathbb{N}^n,~\delta^I_i=|\{i\}\cap I|.$$
	For a simple multi-index $w^I$, $t^{w^I}=t_I$ and $\nabla^{w^I}=\nabla_I$. The following property is false for a general multi-index.
	\begin{lemma}\label{multi-weight E}
		For any simple $w\in\mathbb{N}^n$, $E_w$ is an $\mathcal{O}_U$-submodule of $E_U$.
	\end{lemma}
	\begin{proof}
		It suffices to show $\nabla^\beta E$ for $\beta$ simple admits the following property:
		\begin{eqnarray}\label{weight submodule beta}
			f\nabla^{\beta}E\in E_\beta,~f \in\mathcal{O}_{U}.
		\end{eqnarray}
		We prove it by induction on $|\beta|=\sum_{i=1}^r\beta_i$. When $|\beta|=0$, \eqref{weight submodule beta} holds trivially. Assume \eqref{weight submodule beta} holds for $|\beta|\leq N-1$. Given any $\beta=w^I$ with $|I|=N$, one picks an element $i\in I$ and writes $$\nabla^{\beta}=\nabla_{i}\nabla^{\beta'},\beta'=w^{I-\{i\}}.$$
		Since
		$$f\nabla_{i}e=\nabla_{i}(f e)-\lambda\frac{\partial f}{\partial t_{i}}t_{i}e,~f\in\mathcal{O}_{U},~e\in E,$$
		it follows that
		$$f\nabla^{\beta}e=\nabla_{i}(f\nabla^{\beta'}e)-\lambda\frac{\partial f}{\partial t_{i}}t_{i}\nabla^{\beta'}e.$$
		By the induction hypothesis, we get that
		$$f\nabla^{\beta'}e\in E_{\beta'},~\frac{\partial f}{\partial t_{i}}\nabla^{\beta'}e\in E_{\beta'}.$$
		As $\nabla_iE_{\beta'}\subset E_\beta$ and $t_iE_{\beta'}\subset E_\beta$,  \eqref{weight submodule beta} holds for $\beta$.  The lemma is proved.
	\end{proof}
	
	\begin{lemma}\label{coherent}
		For any simple $w\in\mathbb{N}^n$, $E_{w}$ is quasi-coherent (resp. coherent) if $E_U$ is quasi-coherent (resp. coherent).
	\end{lemma}
	\begin{proof}
		We may assume $U$ to be affine. For a quasi-coherent $\sO_U$-module $E_U$, we write $$E_U=\varinjlim_{E'\subset E_U~\mathrm{coherent}}E'.$$ Then the $\sO_U$-submodule $E'_{w}\subset E'$ is coherent. Indeed, for a set of sections $\{e_{1},\cdots,e_{m}\}\subset \Gamma(U,E)$ which generates $E'$, $E'_{w}$ is seen to be generated by the following sections:
		$$t^{\alpha}\nabla^{\beta}e_{i},~\alpha,\beta\in\mathbb{N}^{n},~\alpha+\beta=w,~1\leq i\leq m.$$
		This is because of the following formula which one proves by induction on $|\beta|$:
		\begin{eqnarray}\label{bring f outside}
			\nabla^{\beta}(fe)=\sum_{\alpha'+\beta'=\beta,\alpha',\beta'\in\mathbb{N}^{n}}f_{\beta'}t^{\alpha'}\nabla^{\beta'}e,
		\end{eqnarray}
		where $\beta$ is simple, $f, f_{\beta'}\in \sO_U$ and $e\in E_U$. As obviously $$E_w=\varinjlim_{E'\subset E~\mathrm{coherent}}E'_w,$$ it follows that $E_w$ is quasi-coherent. Certainly, if $E_U$ is coherent, the previous argument shows that $E_w$ is coherent.
	\end{proof}
	Note that the graded $\mathcal{O}_{U}$-module $\Omega^{\bullet}_{U_{\log}/S}$ has the following basis:
	\begin{eqnarray}\label{P}P^{\bullet}=\cup_{0\leq m\leq n}P^{m},~P^m=\{\omega_I: I\subset\{1,\cdots,n\},|I|=m\}.
	\end{eqnarray}
	\begin{definition}\label{mutli-weight}
		One defines the function $w:P^{\bullet}\to\{0,1\}^n$ as follows:
		$$w(\beta)=\left\{\begin{matrix}(0,\cdots,0)&\beta\in P^{0};\\
			(\epsilon_{1},\cdots,\epsilon_{n}),&\epsilon_{i}=|\{i\}\cap I|,~\beta=\omega_I\in P^{m},~m>0.\end{matrix}\right.$$
		We call $w(\beta)$ the multi-weight of $\beta$.
	\end{definition}
	We deduce immediately a key property of $R_{t}$ to the application of \'etale descent in the construction below.
	\begin{proposition}\label{multiweight Higgs}
		Regard $R_{t}$ in Lemma \ref{independence of special local coord} as an $\sO_U$-module. Then it decomposes into a direct sum of $\sO_U$-submodules:
		$$
		R_{t}=\bigoplus_{\beta\in P^{\bullet}}E_{w(\beta)}\otimes\beta.
		$$ Consequently, $R_{t}$ is a quasi-coherent (resp. coherent) $\sO_U$-module if $E_U$ is quasi-coherent (resp. coherent).
	\end{proposition}
	\begin{proof}
		It suffices to show the equality. For $I\cap J=\emptyset,f\in\mathcal{O}_{U}, e\in E_U$,
		$$f\nabla_{I}e\otimes \omega_{I}\wedge d_Jt=ft_{J}\nabla_{I}e\otimes\omega_{I\cup J}\in E_{w(\beta)}\otimes\beta,$$
		for $\beta=\omega_{I\cup J}$. The equality follows.
	\end{proof}
	Let $g:X'\to X$ be an \'etale morphism. Then (S)NCD $D\subset X$ pulls back to (S)NCD $D'\subset X'$. Let $X'_{\log}$ be the log scheme whose logarithmic structure is determined by $D'$. Let $(E',\nabla')$ be the pullback of $(E,\nabla)$ to $X'$ via $g$, which is an integrable $\lambda$-connection on $X'_{\log}/S$.
	\begin{lemma}\label{etale pull back}
		Let $D, U$ be as in Lemma \ref{independence of special local coord} and $g: U'\to U$ an \'{e}tale morphism. Let $t$ be an admissible system of local coordinates on $U$. Set $(E',\nabla')=g^*(E_U,\nabla_U)$ and $t'=g^*t$ the induced admissible system of local coordinates on $U'$.  Let $R_{t'}$ be the submodule of $\Omega^{\bullet}(E',\nabla')$ as defined by Lemma \ref{independence of special local coord}. Then one has
		$$
		R_{t'}=g^*R_{t}.
		$$
	\end{lemma}
	\begin{proof}
		As $g$ is flat, one gets an inclusion
		$$
		g^*R_{t}\subset g^*\Omega^{\bullet}(E_U,\nabla_U)=\Omega^{\bullet}(E',\nabla').
		$$
		Write $t=(t_1,\cdots,t_n)$ and $t'_i=g^*t_i\in \Gamma(U',\sO_{U'})$. The inclusion $g^*R_{t}\subset R_{t'}$ is obvious, because
		$$
		g^*(\nabla_{I}(e)\otimes\omega_I)=\nabla'_{I}(g^*e)\otimes\omega_I'\in R_{t'},~e\in E_U.
		$$
		By definition, any section of $R_{t'}$ is locally a finite $\sO_{U'}$-linear combination of
		$$
		\nabla'_{I} e'\otimes\omega'_{I}\wedge\beta',~e'\in E',~\beta'\in\Omega^{\bullet}_{U'/S}.$$
		We may write $e',\beta'$ in the above expression as a finite linear combination:
		$$e'=\sum_{j\in J_{1}}a_{j}g^*e_{j},~\beta'=\sum_{j\in J_{2}}b_{j}g^*\beta_{j},$$
		where $a_{j},b_{j}\in\mathcal{O}_{U'},~e_{j}\in E,\beta_{j}\in\Omega^{\bullet}_{U/S}$.
		By \eqref{bring f outside}, we have
		$$\nabla'_{I} e'=\sum_{J\subset I}c'_{J}t'_{I-J}\nabla'_{J}(g^*e_{J}),~c'_{J}\in\mathcal{O}_{U'},~e_{J}\in E_U,$$
		hence any section of $R_{t'}$ can be expressed as a finite linear combination of
		$$
		g^*(\nabla_Je\otimes\omega_J\wedge\beta),~e\in E_U,~\beta\in\Omega^{\bullet}_{U/S}.
		$$
		This shows the opposite inclusion $R_{t'}\subset g^*R_{t}$.
	\end{proof}
	
	For a given integrable $\lambda$-connection $(E,\nabla)$ over $X_{\log}/S$, the construction of the intersection $\lambda$-connection associated to $(E,\nabla)$ proceeds as follows:
	
	{\itshape Step 1: the SNCD case.} We assume $D$ to be a SNCD. Take a Zariski open covering $\sU=\{U_{\alpha}\}_{\alpha\in \Lambda}$, such that on each $U_{\alpha}$ there exists an admissible system of local coordinates $t_{\alpha}$.  Form the submodule $R_{t_{\alpha}}\subset \Omega^{\bullet}(E_{U_{\alpha}},\nabla_{U_{\alpha}})$ defined in Lemma \ref{independence of special local coord}. Then by Lemma \ref{independence of special local coord}, the various local modules $\{R_{t_{\alpha}}\}_{\alpha\in \Lambda}$ glue into a subcomplex of $\Omega^*(E,\nabla)$. It is independent of the choice of such an open covering $\sU$,  because it is stable under the refinement of open coverings.  We denote the subcomplex by $\Omega^*_{int}(E,\nabla)$.

	{\itshape Step 2: the quasi-coherent case.} Let $D$ be a reduced NCD and assume $E$ quasi-coherent. Take an \'{e}tale covering $g: X'\to X$ such that the pullback divisor $D'$ becomes a SNCD. Set $(E',\nabla')=g^*(E,\nabla)$. Let $p_i, i=1,2$ be the two natural projections $X'\times_X X'\to X'$ which are \'{e}tale morphisms. By Lemma \ref{etale pull back}, it follows that
	$$
	p_1^*\Omega^{*}_{int}(E',\nabla')=\Omega^{*}_{int}(E'',\nabla'')=p_2^*\Omega^{*}_{int}(E',\nabla')
	$$
	as subcomplexes of $\Omega^{*}(E'',\nabla'')$, where $(E'',\nabla'')$ is the pullback of $(E,\nabla)$ via the natural morphism $g\circ p_1=g\circ p_2: X'\times_X X'\to X$. By the \'etale descent for quasi-coherent modules, there is a unique submodule of $\Omega^{\bullet}(E,\nabla)$ which pulls back to $\Omega^{\bullet}_{int}(E',\nabla')$. The submodule is closed under differential because it is so after pullback. We denote the resulting subcomplex by $\Omega^*_{int}(E,\nabla)$.

	{\itshape Step 3: the general case.} Let $D$ be a reduced NCD and $(E,\nabla)$ an integrable $\lambda$-connection over $X_{\log}/S$. For $x\in X$, let $\sU_x=\{U_x\}$ be the directed set of Zariski open neighborhoods of $x$ with $\Omega_{U_{x,\log}/S}$ free. Then for any $U_x\in \sU_x$, we have a natural morphism of integrable $\lambda$-connections
	$$
	(\widetilde{E(U_x)},\widetilde \nabla)\to (E,\nabla)|_{U_x},
	$$
	where $\widetilde{E(U_x)}$ is the associated quasi-coherent $\sO_{U_x}$-module and $\widetilde \nabla$ is the natural $\lambda$-connection induced from $\nabla$. Let $\iota$ be the composite of the following natural morphisms
	$$
	\Omega^*_{int}(\widetilde{E(U_x)},\widetilde \nabla)_x\hookrightarrow \Omega^*(\widetilde{E(U_x)},\widetilde \nabla)_x\to \Omega^*(E,\nabla)_x.
	$$
	Then for each $x$, we define a sub germ of the complex by
	$$
	\sF^*_x:=\sum_{U_x\in \sU_x}\iota \Omega^*_{int}(\widetilde{E(U_x)},\widetilde \nabla)_x\subset \Omega^*(E,\nabla)_x.
	$$
	The set of germs $\{\sF^*_x\}_{x\in X}$ determines uniquely a subcomplex of $\Omega^*(E,\nabla)$ each component of which is a sheaf of $\sO_X$-submodule. We denote it by $\Omega^*_{int}(E,\nabla)$.
	
	It should be clear that the constructions in various steps are compatible. The so-constructed subcomplex $\Omega^*_{int}(E,\nabla)$ is called the intersection $\lambda$-complex associated to $(E,\nabla)$.
	
	\begin{remark}
		In a recent work \cite{Lin}, X.-J. Lin gives a coordinate-free reconstruction of $\Omega^*_{int}(E,\nabla)$ via the residue map of $\nabla$, under the condition that $D$ is a SNCD, $E$ is locally free of finite rank and the residues of $\nabla$ along various stratum, formed by intersections of components of $D$, are bundle morphisms.
	\end{remark}
	\subsection{Base change and cohomology}
	The following base change property for intersection $\lambda$-complexes follows by its construction.
	\begin{lemma}\label{etale base change}
		Let $g: X'\to X$ be an \'etale morphism. Let $(E,\nabla)$ be an integrable $\lambda$-connection over $X_{\log}/S$.Then there is an equality of subcomplexes
		$$
		\Omega^*_{int}g^*(E,\nabla)=g^*\Omega^*_{int}(E,\nabla).
		$$
	\end{lemma}
	\begin{proof}
		Set $(E',\nabla')=g^*(E,\nabla)$. If $D$ is a SNCD, then it is nothing but Lemma \ref{etale pull back}, because we may check the equality locally. If $D$ is a reduced NCD and $D'=D\times_XX'$ becomes a SNCD, then we may argue as follows:  let $\sF=\{(F,\nabla,\psi)\}$ be a family of morphisms of integrable $\lambda$-connections $\psi: (F,\nabla)\to (E,\nabla)$ with $F$ quasi-coherent. If $E_x=\sum_{F\in \sF}\iota(F_x)$, then by construction it holds that
		$$
		\Omega^{\bullet}_{int}(E,\nabla)_x=\sum_{F\in \sF}\iota \Omega^{\bullet}_{int}(F,\nabla)_x.
		$$
		Pick up any $x'\in X'$ and set $x=g(x')$. Clearly $E'_{x'}=E_x\otimes_{\sO_{x,X}}\sO_{x',X'}$.
		Therefore by Lemma \ref{etale pull back}, we have
		$$
		g^*\sum_{U_x\in \sU_x}\iota \Omega^{\bullet}_{int}(\widetilde{E(U_x)},\widetilde \nabla)_x=\sum_{U_x\in \sU_{x}}\iota  \Omega^{\bullet}_{int}(g^*(\widetilde{E(U_x)},\widetilde \nabla))_{x'},
		$$
		and then the equality $$
		g^*\Omega^{\bullet}_{int}(E,\nabla)_{x}=\Omega^{\bullet}_{int}(E',\nabla')_{x'}.$$
		Now we take an \'etale covering $h: X''\to X$ such that $D$ pulls back to a SNCD. Consider the following Cartesian diagram:
		$$
		\xymatrix{X'''\ar[r]^-{h'}\ar[d]_-{g'}&X'\ar[d]^-{g}\\
			X''\ar[r]^-{h}&X.}
		$$
		The previous discussions show that
		$$
		\begin{array}{lll}
			&&h'^*g^*\Omega^{\bullet}_{int}(E,\nabla)=g'^*h^*\Omega^{\bullet}_{int}(E,\nabla)=g'^*\Omega^{\bullet}_{int}h^*(E,\nabla) \\
			&=&\Omega^{\bullet}_{int}g'^*h^*(E,\nabla)=\Omega^{\bullet}_{int}h'^*g^*(E,\nabla)=h'^*\Omega^{\bullet}_{int}g^*(E,\nabla).
		\end{array}
		$$
		By the faithfully flat descent, $g^*\Omega^{\bullet}_{int}(E,\nabla)=\Omega^{\bullet}_{int}g^*(E,\nabla)$ as requested.
	\end{proof}
	Consider the following Cartesian diagram:
	$$\xymatrix{X'\ar[r]^-{g'}\ar[d]_-{f'}&X\ar[d]^-{f}\\
		S'\ar[r]^-{g}&S.}$$
	Set $D'=D\times_XX'$, which a reduced NCD of $X'$, and $\lambda'=g^*\lambda\in \Gamma(S',\sO_{S'})$. We endow $X'$ with the logarithmic structure determined by $D'$. An integrable $\lambda$-connection on $X_{\log}/S$ pulls back to an integrable $\lambda'$-connection $X'_{\log}/S'$ via $g'$.
	\begin{lemma}\label{base change}
		Let $(E,\nabla)$ be an integrable $\lambda$-connection on $X_{\log}/S$ and $S'$ a regular noetherian scheme. Let $(E',\nabla')$ be the pullback of $(E,\nabla)$ via $g'$ in the above Cartesian diagram. Then there is a natural surjective morphism $g'^*\Omega^{\bullet}_{int}(E,\nabla)\to \Omega^\bullet_{int}(E',\nabla')$, which becomes an isomorphism when one of the following conditions is satisfied:
		\begin{itemize}
			\item[(i)] $g$ is flat;
			\item[(ii)] the quotient $\sO_X$-module $Q^{\bullet}:=\Omega^{\bullet}(E,\nabla)/\Omega^{\bullet}_{int}(E,\nabla)$ is flat over $S$.
		\end{itemize}
	\end{lemma}
	\begin{proof}
		We may assume $D$ simple normal crossing. The image of the composite of the following natural morphisms
		$$
		\alpha:g'\Omega^\bullet_{int}(E,\nabla)\to g'^*\Omega^\bullet(E,\nabla)\cong\Omega^\bullet(E',\nabla')
		$$
		is easily seen to be $\Omega^\bullet_{int}(E',\nabla')$. The reasoning is similar to that of Lemma \ref{etale pull back}. Under either condition (i) or (ii), $\alpha$ is injective. The lemma follows.
	\end{proof}
	\begin{proposition}\label{cohomology and base change}
		Notation as Lemma \ref{base change}. Assume additionally that $f$ is proper. Then, for any $i\geq 0$
		\begin{eqnarray}\label{higher direct image base change}g^*\R^if_*\Omega^{*}_{int}(E,\nabla)\cong \R^{i}f'_{*}\Omega^{*}_{int}(E',\nabla')\end{eqnarray}
		holds if one of the following conditions is satisfied:
		\begin{itemize}
			\item[(i)] $E$ is quasi-coherent and $g$ is flat;
			\item[(ii)] $E$ is coherent, $Q^\bullet$ is flat over $S$ and the $\sO_S$-module  $\R^if_*\Omega^{*}_{int}(E,\nabla)$ is locally free of finite rank for any $i$.
		\end{itemize}
	\end{proposition}
	\begin{proof}
		The problem is local both on $S$ and $S'$. So we may assume they are affine. Set $S=\Spec(A)$ and $S'=\Spec(A')$. By Corollary \ref{multiweight Higgs}, $\sF^{*}:=\Omega^*_{int}(E,\nabla)$ is a finite complex of (quasi-)coherent $\sO_{X}$-modules when $E$ is (quasi-)coherent. By Lemma \ref{base change}, $\sF^{'*}:=g'^*\Omega^*_{int}(E,\nabla)=\Omega^*_{int}(E',\nabla')$ is a finite complex of (quasi-)coherent $\sO_{X'}$-modules. Let $\sU$ be a finite open affine covering of $X$ which pullbacks to a finite open covering $\sU'$ of $X'$. Let $\sC^{*,*}$ be the \v{C}ech double complex associated to $\sF^*$ with respect to $\sU$, and let $\sC^*$ be the total \v{C}ech complex. Similarly, let $\sC^{'*}$ be the total \v{C}ech complex associated to $\sF^{'*}$ with respect to $\sU'$. By construction, $\sC^{'*}$ is the base change of $\sC^*$ via $g'$.
		Now that
		$$
		\R^if_*\Omega^{*}_{int}(E,\nabla)\cong H^i(f_*\sC^*);\quad \R^if'_*\Omega^{*}_{int}(E',\nabla')\cong H^i(f'_*\sC^{'*}),
		$$
		Case (i) follows as cohomology commutes with flat base change. If $Q^{\bullet}$ is flat over $S$, then $\Omega^{\bullet}_{int}(E,\nabla)$ is flat over $S$ (in particular $E$ is $S$-flat). It follows that the above finite complex $\sC^*$ has flat components. Since its cohomology groups are all locally free by assumption, it follows that for any morphism $A\to A'$, the natural morphisms
		$$
		H^i(f_*\sC^*)\otimes_AA'\to H^i(f_*\sC^*\otimes_AA')=H^i(f'_*\sC^{'*})
		$$
		are isomorphisms for all $i$s. This can be shown by a simple descending induction on $i$. Hence Case (ii) follows. Alternatively, one may replace the complex $\sC^*$ with the so-called Grothendieck complex $\sK^*$, which is a finite complex of \emph{finitely generated} flat $A$-modules (see e.g. \cite[Lemma 6.7]{IL02}, or \cite[Lemma 1, \S5]{M}), from which Case (ii) follows too.
	\end{proof}
	
	\begin{definition}\label{intersection cohomology group}
		Let $S$ be a regular locally Noetherian scheme. Let $\alpha: X\to S$ be a smooth morphism and $D\subset X$ an $S$-relative reduced NCD. Let $(E,\nabla)$ be an integrable $\lambda$-connection over $X_{\log}/S$. The $i$-th higher direct image $\R^i\alpha_{*}\Omega_{int}^*(E,\nabla)$ is naturally an $\sO_S$-module, which we call the $i$-th intersection cohomology group associated to $(E,\nabla)$.
	\end{definition}

	\section{De Rham-Higgs comparison theorem}
	In this section and the next, let $k$ be a perfect field of characteristic $p>0$, $X$ a smooth variety of dimension $n$ over $k$ and $D\subset X$ a reduced NCD. We assume the pair $(X,D)$ is $W_2(k)$-liftable. We shall choose and then fix such a lifting $(\tilde X,\tilde D)$ in the discussion. This is because the Ogus-Vologodsky correspondence (\cite{OV}, \cite{Schepler}) depends on such a choice. Set $\sX/\sS=((X,D)/k,(\tilde X',\tilde D')/W_2(k))$, where $(\tilde X',\tilde D')$ is the fiber product $(\tilde X,\tilde D)\times_{W_2(k),\sigma}W_2(k)$. The main result of the section is Theorem \ref{dec thm}. Our method is the extension of that of \cite{DI} to coefficients.
	
\subsection{Homotopy on complexes}
 The aim of this subsection is to introduce the notion of an \emph{$\sL$-indexed $\infty$-homotopy} between two complexes, which allows us to reinterpret the decomposition theorem \cite{DI} as an $F_{*}(\sL_{\sX/\sS})$-indexed $\infty$-homotopy. Here $\sL_{\sX/\sS}$ refers to the Zariski sheaf of log Frobenius liftings over $X$, which assigns to an open subset $U\subset X$ the set of log Frobenius liftings $\tilde U:=\tilde X|_{U}\to \tilde X'$.  The crystalline nature of $\sL_{\sX/\sS}$ plays an essential role in theory of Ogus-Vologodsky \cite{OV}.

Let $(Y,\mathcal O_Y)$ be a ringed space. For $\mathcal F^*,\mathcal G^*$, two complexes of sheaves of $\mathcal O_Y$-modules, we let $\mathcal Hom^*_{\mathcal O_Y}(\mathcal F^*,\mathcal G^*)$ be the associated Hom complex. Explicitly, for $r\in\mathbb Z$,
$$
\mathcal Hom^r_{\mathcal O_Y}(\mathcal F^*,\mathcal G^*):=\prod_{i\in\mathbb Z}\mathcal Hom_{\mathcal O_Y}(\mathcal F^i,\mathcal G^{i+r}),
$$
and
$$
d_{\mathcal Hom}:\mathcal Hom^r_{\mathcal O_Y}(\mathcal F^*,\mathcal G^*)\to\mathcal Hom^{r+1}_{\mathcal O_Y}(\mathcal F^*,\mathcal G^*)
$$
sends $\{f_i\in\mathcal Hom_{\mathcal O_Y}(\mathcal F^i,\mathcal G^{i+r})\}_{i\in\mathbb Z}$ to
$$\{d_{\mathcal G^*}(-1)^{i+r}f_i+(-1)^{i+r+1}f_{i+1}d_{\mathcal F^*}\in\mathcal Hom_{\mathcal O_Y}(\mathcal F^i,\mathcal G^{i+r+1})\}_{i\in\mathbb Z}.
$$
Let $\sL$ be a sheaf of sets over $Y$ whose stalks are all nonempty (hence the projection to $Y$ of the associated espace \'etal\'e to $\sL$ is surjective). Recall the construction of the simplicial complex $\Delta_*(\mathcal L)$ attached to $\mathcal L$ as follows: For $r\geq0$, $\Delta_r(\mathcal L)$ is the sheaf associated to the presheaf of abelian groups, which assigns to an open subset $U\subset Y$ the free abelian group generated by elements of $\Gamma(U,\mathcal L^{r+1})$. The boundary operator
$$\partial:\Delta_{r+1}(\mathcal L)\to\Delta_r(\mathcal L),~r\geq0$$
is given as usual:
$$
(l_0,\cdots,l_{r+1})\mapsto\sum_{q=0}^{r+1}(-1)^q (l_0,\cdots,\widehat{l_q},\cdots,l_{r+1}).
$$
Set $\Delta_r(\mathcal L)=0$ for $r<0$.
\begin{definition}\label{defn infty homotopy}
 Notation as above. An $\mathcal L$-indexed $\infty$-homotopy from $\mathcal F^*$ to $\mathcal G^*$ is a morphism of complexes of sheaves of abelian groups
 \begin{eqnarray}\label{Delta complex infty}
 	\mathrm{Ho}:\Delta_*(\mathcal L)\to  \mathcal Hom^{*}_{\mathcal O_Y}(\mathcal F^*,\mathcal G^*).
 \end{eqnarray}
 In other words, $\mathrm{Ho}$ is a family of morphisms
 	$$
 	\mathrm{Ho}^r:\mathcal L^{r+1}\to \mathcal Hom^{-r}_{\mathcal O_Y}(\mathcal F^*,\mathcal G^*),~r\geq0
 	$$
 	such that
 	$$
 	\delta\circ \mathrm{Ho}^r=d_{\mathcal Hom}\circ\mathrm{Ho}^{r+1},
 	$$
 	and the images of $\mathrm{Ho}^0$ are morphism of complexes. Here $\mathcal L^{r+1}$ is the direct product of $(r+1)$ copies of $\mathcal L$ and for $f:\mathcal L^{r+1}\to\mathcal Hom^{-r}_{\mathcal O_Y}(\mathcal F^*,\mathcal G^*)$,
 	$$
 	\delta f:\mathcal L^{r+2}\to\mathcal Hom^{-r}_{\mathcal O_Y}(\mathcal F^*,\mathcal G^*),~(l_0,\cdots,l_{r+1})\mapsto\sum_{q=0}^{r+1}(-1)^qf(\cdots,\widehat{l_q},\cdots).
 	$$
 \end{definition}
The major usage of an $\mathcal L$-indexed $\infty$-homotopy in this paper is the following result.
 \begin{proposition}\label{Cech construction}
Notation as above. Suppose that for any $y\in Y$, there is a section $l_y$ of $\mathcal L$ around $y$ such that $\mathrm{Ho}^0(l_y)$ is a  quasi-isomorphism. Then $\mathcal F^*$ is isomorphic to $\mathcal G^*$ in the derived category of $\mathcal O_Y$-modules. Moreover, under this situation, any image of $\mathrm{Ho}^0$ is a quasi-isomorphism.
\end{proposition}
 \begin{proof}
Construct an open covering $\mathcal U$ of $Y$ as follows: Set
$$
I=\{(U,l)|U\subset Y~\mathrm{open},l\in\Gamma(U,\mathcal L)\},
$$
and $\mathcal U=\{U_i\}_{i\in I}$ with $U_i=U$ for $i=(U,l)\in I$. Using the family $\{\mathrm{Ho}^r\}_{r\geq 0}$, we may naturally construct a morphism of complexes
\begin{eqnarray}\label{Cech morphism}
\varphi:\mathcal F^*\to\check{\mathcal C}(\mathcal U,\mathcal G^*),
\end{eqnarray}
where $\check{\mathcal C}(\mathcal U,\mathcal G^*)$ is the total complex associated to the $\mathrm{\check{C}}$ech double complex $\check{\mathcal C}^*(\mathcal U, \mathcal G^*)$. Indeed, for
$i_0,\cdots,i_r$ in $I$, we define
$$
\varphi(r,s)_{i_0,\cdots,i_r}:\mathcal F^{r+s}\to j_{*}\mathcal G^s|_{U_{i_0,\cdots,i_r}}
$$
to be the adjoint of
$$
\mathrm{Ho}^r(l_{i_0}|_{U_{i_0,\cdots,i_r}},\cdots,l_{i_r}|_{U_{i_0,\cdots,i_r}}):\mathcal F^{r+s}|_{U_{i_0,\cdots,i_r}}\to\mathcal G^{s}|_{U_{i_0,\cdots,i_r}}.
$$
Here for $i=(U,l)\in I$, $l_i=l$, and $j:U_{i_0,\cdots,i_r}:=U_{i_0}\cap\cdots\cap U_{i_r}\hookrightarrow Y$ is the natural inclusion. Hence, we obtain
$$
\varphi(r,s)=\bigoplus_{i_0,\cdots,i_r}\varphi(r,s)_{i_0,\cdots,i_r}:\mathcal F^{r+s}\to\check{\mathcal C}^r(\mathcal U, \mathcal G^s)=\bigoplus_{i_0,\cdots,i_r\in I}j_{*}\mathcal G^s|_{U_{i_0,\cdots,i_r}}.
$$
It is a tautology that the so-constructed $\varphi$ is a morphism of complexes.

Now assume the assumption of this proposition. We reproduce the argument of \cite[\S2 (d)]{DI} as follows. It is a local problem to show that $\varphi$ is a quasi-isomorphism. So we may assume that $\mathcal L$ admits a global section and that there is a global section $l_{*}\in\Gamma(Y,\mathcal L)$ such that $\mathrm{Ho}^0(l_*)$ is a quasi-isomorphism. Consider the following commutative diagram
 	$$
 	\xymatrix{
 		&\mathcal G^*&\\
 		\mathcal F^*\ar[r]^-{\varphi}\ar[ru]^-{\mathrm{Ho}^0(l_*)}&\check{\mathcal C}(\mathcal U, \mathcal G^*)\ar[u]&\mathcal G^*\ar[l]_{\epsilon}\ar[lu]_{id},
 	}
 	$$
 	where $\epsilon$ is the augmentation morphism and the vertical arrow is induced by the refinement of coverings: Take the open covering consisting of only one element $\mathcal{U}_*=\{Y\}$ indexed by $(Y,l_{*})$. Therefore, all arrows in the diagram, including $\varphi$, are quasi-isomorphisms. The first statement follows. Now since $\varphi$ is known to be a quasi-isomorphism, running the commutative diagram again for any other section of $\sL$ tells us that its image under $\mathrm{Ho^0}$ is also a quasi-isomorphism. This completes the whole proof.	
 \end{proof}

The decomposition theorem of Deligne-Illusie \cite{DI} may be reinterpreted as an instance of $\sL$-indexed $\infty$-homotopy.
 \begin{theorem}[{\cite[Th\'eor\`eme 2.1]{DI}}]\label{DI case}
 	Set $\mathcal L=F_*\mathcal L_{\mathcal X/\mathcal S}$. There is an $\mathcal L$-indexed $\infty$-homotopy \begin{eqnarray*}\label{Ho for trivial}
 		\mathrm{Ho}:\Delta_*(\mathcal L)\to\mathcal Hom^*_{\mathcal O_{X'}}(\bigoplus_{i=0}^{p-1}\Omega^i_{X'_{\log}/k}[-i],\tau_{<p}F_*\Omega^*_{X_{\log}/k})
 	\end{eqnarray*}
 	such that $\mathrm{Ho}^0$ sends any section of $\mathcal L$ to a quasi-isomorphism.
 \end{theorem}

\subsection{An infinity homotopy for nilpotent Higgs modules}
We briefly recall the construction of the inverse Cartier transform $C^{-1}$ of Ogus-Vologodsky and Schepler via the technique of exponential twisting \cite{LSZ} (see Appendix \cite{LSYZ} for the SNCD case)\footnote{The inverse Cartier transform via the exponential twisting differs from the inverse Cartier transform of Ogus-Vologodsky by an automorphism $\iota$ of $\HIG_{\leq p-1}(X'_{\log}/k)$, which sends $(E,\theta)$ to $(E,-\theta)$.}. Take a Zariski open covering $\mathcal{U}=\{U_{\alpha}\}_{\alpha\in \varLambda}$ of $X$. By base change, one obtains the induced open covering  $\mathcal{U}'=\{ U_{\alpha}'\}_{\alpha\in \varLambda}$ of $X'$. For each $\alpha$, let $(\tilde{U}_{\alpha},\tilde{D}_{\alpha})$ be the restriction of the pair $(\tilde X,\tilde D)$ to $U_{\alpha}$. Set $\Omega_{U_{\alpha,\log}/k}$ to be the restriction of $\Omega_{X_{\log}/k}$ to $U_{\alpha}$. Recall that a \emph{logarithmic Frobenius lifting} over $\tilde U_{\alpha}$ is a $W_2(k)$-morphism
 $$\tilde{F}_{\alpha}: \sO_{\tilde U_{\alpha}}\to \sO_{\tilde U'_{\alpha}}$$
	lifting the relative Frobenius morphism on $U_{\alpha}$ and satisfying
$$
\tilde F_{\alpha}^*\sO_{\tilde U'_{\alpha}}(-\tilde D'_{\alpha})=\sO_{\tilde U_{\alpha}}(-p\tilde D_{\alpha}).
$$
Such a lifting exists locally and two such liftings differ by an element in $F^*T_{X_{\log}'/k}$ over $U_{\alpha}$, where $T_{X_{\log}'/k}$ is the $\sO_{X'}$-dual of  $\Omega_{X'_{\log}/k}$. Then we obtain an obstruction class in $H^1(X,F^*T_{X_{\log}'/k})$, whose \v{C}ech representative is given by
	$\{h_{\alpha\beta}: F^*\Omega_{U'_{\alpha\beta,\log}/k}\to \sO_{U_{\alpha\beta}}\}$ with the relations
	\begin{itemize}
		\item [(i)] Over $U_{\alpha\beta}$, $dh_{\alpha\beta}=\zeta_{\beta}-\zeta_{\alpha}$ with
		$\zeta_{\alpha}:=\frac{\tilde{F}_{\alpha}}{[p]}: F^{*}\Omega_{U'_{\alpha,\log}/k}\to \Omega_{U_{\alpha,\log}/k}$.
		\item [(ii)] Over $U_{\alpha\beta\gamma}$, $h_{\alpha\beta}+h_{\beta\gamma}=h_{\alpha\gamma}$.
	\end{itemize}
	In (i), the notation $\frac{1}{[p]}$ refers to the restriction of the following canonical isomorphism to $U$:
 $$
 p\Omega_{\tilde X/W_2(k)}(\log \tilde D)\cong \Omega_{X/k}(\log D).
 $$
 For a nilpotent Higgs module $(E,\theta)$ of level $\ell\leq p-1$ over $X'_{\log}/k$, an object in the category $\HIG_{\leq \ell}(X'_{\log}/k)$, its inverse Cartier transform $(H,\nabla):=C^{-1}(E,\theta)$, an object in the category $\MIC_{\leq \ell}(X_{\log}/k)$, is obtained by gluing local modules with integrable connection
	$$
	(H_{\tilde{F}_{\alpha}},\nabla_{\tilde{F}_{\alpha}}):=(F^*E|_{U_\alpha}, \nabla_{can}+(id\otimes\zeta_{\alpha})(F^{*}\theta))
	$$
	via the gluing data $\{G_{\alpha\beta}:=\exp(1\otimes h_{\alpha\beta})F^{*}\theta=\sum_{i=0}^{p-1}\frac{((1\otimes h_{\alpha\beta})F^{*}\theta)^{i}}{i!}\}$. It was shown that $(H,\nabla)$ is independent of the choices of data $(\tilde{U}_{\alpha},\tilde{F}_{\alpha})$. The following lemma follows directly from the construction.
	\begin{lemma}\label{inverse cartier and etale base change}
		The inverse Cartier transform commutes with \'etale base change. Precisely, let $\tilde g: \tilde Y\to \tilde X$ be an \'etale morphism, $(E,\theta)\in \HIG_{\leq l}(X_{\log}/k)$. Set $g=\tilde g\times k: Y\to X$. Then $g^*(E,\theta)\in \HIG_{\leq l}(Y_{\log}/k)$, where $Y_{\log}$ is equipped with the log structure determined by $D\times_XY$, and
		$$
		C^{-1}(g^*(E,\theta))=g^*(C^{-1}(E,\theta)).
		$$	
	\end{lemma}
	\begin{proof}
		By \'etaleness, there is one unique log Frobenius lifting over a given log Frobenius lifting downstairs. Using these log Frobenius liftings as part of the construction data for the inverse Cartier transform upstairs, the equality becomes transparent. We refer the reader for a detailed proof in \cite[Theorem 5.3 ]{La}.
	\end{proof}
	
The following result generalizes Theorem \ref{DI case}.
	\begin{theorem}\label{infty homotopy}
Notation as above. Then there is an $\sL:=F_*\mathcal L_{\mathcal X/\mathcal S}$-indexed $\infty$-homotopy $\mathrm{Ho}$ from $\tau_{<p-\ell}\Omega^{*}(E,\theta)$ to $\tau_{<p-\ell}F_{*}\Omega^{*}(H,\nabla)$
				$$
		\mathrm{Ho}^r:\mathcal L^{r+1}\to\mathcal Hom^{-r}_{\mathcal O_{X'}}(\tau_{<p-\ell}\Omega^{*}(E,\theta),\tau_{<p-\ell}F_{*}\Omega^{*}(H,\nabla))
		$$
		such that the images of $\mathrm{Ho}^0$ are quasi-isomorphisms. Furthermore, $\mathrm{Ho}$ restricts to an $\infty$-homotopy from $\tau_{<p-\ell}\Omega_{int}^{*}(E,\theta)$ to $\tau_{<p-\ell}F_{*}\Omega_{int}^{*}(H,\nabla)$ such that the images of $\mathrm{Ho}^0$ are also quasi-isomorphisms.
	\end{theorem}
\begin{proof}	
The construction of $\mathrm{Ho}$ is the content of the Appendix, especially Theorem \ref{appendix construction Ho} and Corollary \ref{infty homotopy intersection}. That the images of $\mathrm{Ho}^0$ are quasi-isomorphisms is the content of the Cartier isomorphism. We derive it in our setting as Theorem \ref{Cartier iso}.
\end{proof}

\begin{corollary}\label{dec thm}
Notation in Theorem \ref{infty homotopy}. Then we have an isomorphism in $D(X')$:
$$
\tau_{<p-\ell}F_*\Omega_{int}^*(H,\nabla)\cong\tau_{<p-\ell}\Omega_{int}^*(E,\theta).
$$
\end{corollary}
\begin{proof}
Combine Theorem \ref{infty homotopy} with Proposition \ref{Cech construction}.
\end{proof}

	\subsection{Cartier isomorphism}
	A simple way to interpret the classical Cartier isomorphism (see e.g. \cite[Theorem 9.14]{EV}, \cite[Theorem 3.5]{IL02}) is that it computes the cohomology sheaves of the Frobenius push-forward of the de Rham complex. A beautiful generalization was found by A. Ogus for an arbitrary (logarithmic) module with integrable connection (\cite[Theorems 1.2.1, 3.1.1]{Ogus04}). As the classical one, the generalized Cartier isomorphism of Ogus is of purely positive characteristic. Its relation with the de Rham-Higgs comparison theorem of Ogus-Vologodsky (with liftability assumption) is explained in \cite[Remark 2.30]{OV}. The Cartier isomorphism we are going to present, which is related to that remark, is essentially local (assuming liftings of schemes and Frobenius) but also compatible with the intersection condition.
	\begin{theorem} \label{Cartier iso}
		Assume that over $\tilde X_{\log}$ there is a log Frobenius lifting $\tilde F$. Let $\mathrm{Ho}^0$ be the morphism constructed in Theorem \ref{appendix construction Ho}. Then
		\begin{eqnarray}\label{Csartier iso log}			\mathrm{Ho}^0(\tilde F): \tau_{<p-\ell}\Omega^*(E,\theta)\rightarrow\tau_{<p-\ell}F_*\Omega^*(H,\nabla)
		\end{eqnarray}
is an isomorphism and which  restricts to a quasi-isomorphism of intersection subcomplexes				\begin{eqnarray}\label{Cartier iso int}
			\mathrm{Ho}^0(\tilde F): \tau_{<p-\ell}\Omega^*_{int}(E,\theta)\rightarrow\tau_{<p-\ell}F_*\Omega^*_{int}(H,\nabla).
		\end{eqnarray}
	\end{theorem}
	
The problem is local and thus we can assume that $X$ admits an admissible system of local coordinates $t=(t_1,\cdots,t_n)$. By Proposition \ref{Cech construction}, one can even assume that $\tilde F$ is standard with respect to $t$, i.e. $\tilde F(t_i)=t_i^p$ for $1\leq i\leq n$.  From now on, we fix the natural identifications
	\begin{eqnarray}\label{identification F_*H}(H,\nabla)=(H_{\tilde F},\nabla_{\tilde F}), \quad F_*\Omega^\bullet(H,\nabla)=E\otimes_{\sO_{X'}} F_*\Omega^\bullet_{X_{\log}/k}.\end{eqnarray}
	Clearly, \eqref{Csartier iso log} is obtained by truncating
	$$
	\varphi:\Omega^*(E,\theta)\to F_*\Omega^*(H,\nabla),~e\mapsto e\otimes1,~e\otimes\beta_1\wedge\cdots\wedge\beta_i\mapsto e\otimes\zeta_{\tilde F}(\beta_1)\wedge\cdots\wedge\zeta_{\tilde F}(\beta_i)
	$$
	at place $<p-\ell$. One can check that $\varphi$ restricts to
	$$
	\varphi_{int}:\Omega_{int}^*(E,\theta)\to F_*\Omega_{int}^*(H,\nabla).	
	$$
		Theorem \ref{Cartier iso} is an immediate consequence of
	\begin{proposition}\label{Cartier iso standard}
		$\varphi,\varphi_{int}$ are quasi-isomorphisms.
	\end{proposition}

Before doing that, we introduce the \emph{scalarization} of a nilpotent Higgs module. Let $(E,\theta)$ be a nilpotent Higgs module on $X_{\log}'/k$. Let $T_{X'_{\log}/k}$ be the $\sO_{X'}$-dual of $\Omega^1_{X'_{\log}/k}$. The Higgs structure on the $\sO_{X'}$-module $E$ gives rise to a morphism of $\sO_{X'}$-algebras:
$$
\rho_\theta:S^\bullet T_{X'_{\log}/k}\to\mathcal End_{\mathcal O_{X'}}(E),
$$
so that $E$ is an $S^\bullet T_{X'_{\log}/k}$-module. Let $S^\bullet T_{X'_{\log}/k} \oplus E$ be the trivial infinitesimal extension of $S^\bullet T_{X'_{\log}/k}$ by $E$ (so we obtain an infinitesimal deformation of the vector bundle $\Omega^1_{X'_{\log}/k}$). Let $\ker(\rho^+_{\theta})$ be the kernel of $\rho_{\theta}|_{S^+ T_{X'_{\log}/k}}$. It is an ideal of $S^\bullet T_{X'_{\log}/k} \oplus E$. We put
$$
R(E,\theta):=S^\bullet T_{X'_{\log}/k} \oplus E/\ker(\rho^+_{\theta})\cong \mathcal O_{X'}\oplus\rho_\theta(S^+ T_{X'_{\log}/k})\oplus E,
$$
which inherits an $\sO_{X'}$-algebra structure from $S^\bullet T_{X'_{\log}/k} \oplus E$. More importantly, there is a tautological Higgs field over $R(E,\theta)$: Note that the Higgs field $\theta$ is a global section of
$$
\rho_\theta(S^+ T_{X'_{\log}/k}) \otimes_{\sO_{X'}}\Omega^1_{X'_{\log}/k} \subset\mathcal End_{\mathcal O_{X'}}(E)\otimes_{\sO_{X'}}\Omega^1_{X'_{\log}/k}.
$$
So we obtain a global section $\theta$ of $R(E,\theta)\otimes_{\sO_{X'}}\Omega^1_{X'_{\log}/k}$.
Thus, we obtain the following Higgs structure on $R(E,\theta)$:
$$
R(E,\theta)\to R(E,\theta)\otimes_{\sO_{X'}}\Omega^1_{X'_{\log}/k},~r\mapsto r\theta,~r\in R(E,\theta),
$$
where $R(E,\theta)\otimes_{\sO_{X'}}\Omega^1_{X'_{\log}/k}$ endowed with a natural $R(E,\theta)$-module structure.  By abuse of notation, we denote this Higgs field again by $\theta$. 
\begin{definition}
The Higgs module $(R(E,\theta),\theta)$ is said to be the scalarization of $(E,\theta)$.
\end{definition}
We put an $(R(E,\theta),F_*\Omega^\bullet_{X_{\log}/k})$-bimodule structure on $R(E,\theta)\otimes_{\sO_{X'}} F_*\Omega^\bullet_{X_{\log}/k}$ as follows:
$$
(r',\eta')\cdot (r\otimes\eta):=(rr')\otimes(\eta\wedge\eta'),~r,r'\in R(E,\theta),~\eta,\eta'\in F_*\Omega^\bullet_{X_{\log}/k}.$$
Since $(E,\theta)$ is a direct summand of $(R(E,\theta),\theta)$, we may replace $(E,\theta)$ by its scalarization. Thus we may assume that
	\begin{itemize}
	\item[(i)\ ] $E$ is an $\mathcal O_{X'}$-algebra, $\mathcal O_{X'}\subset E$ as an $\mathcal O_{X'}$-subalgebra and there is a projection of $\mathcal O_{X'}$-algebras $E\to \mathcal O_{X'}$,
		\item[(ii)\ ] the projection above induces a morphism of Higgs modules $(E,\theta)\to (\mathcal O_{X'},0)$,
	\item[(iii)\ ] any section lies in the kernel of the projection above is nilpotent,
\item[(iv)\ ] $\theta:E\to E\otimes_{\mathcal O_{X'}}\Omega^1_{X'_{\log}/k},~e\mapsto e\cdot\vartheta,~\vartheta\in \Gamma(X',E\otimes\Omega^1_{X'_{\log}/k})$. \end{itemize}
Clearly, the assumptions above are satisfied by $(\mathcal O_{X'},0)$.	
	
Let us turn to the proof that $\varphi$ is a quasi-isomorphism. Write $\theta,\nabla$ as a finite sum of their components in the usual way. We abbreviate $\zeta_{\tilde F}$ as $\zeta$. By using $\omega'_i=d\log t'_i$ (resp. $\omega_i=d\log t_i$) for $1\leq i\leq n$, we define sets $P'^m,P'^\bullet$ (resp. $P^m,P^\bullet$) as in \eqref{P}. Let $B^{\bullet}=\cup_{0\leq m\leq n}B^m$, where $B^m$ is the subset of $\Gamma(X',F_{*}\Omega^m_{X_{\log}/k})$ consisting of elements of form
	$$
	t_1^{i_1}\cdots t_n^{i_n}\cdot\omega_I,~0\leq i_1,\cdots,i_n\leq p-1,~I\subset\{1,\cdots,n\}.
	$$
	Set $N^m=B^m-P^m$ and $N^\bullet=\cup_{0\leq m\leq n}N^m$. Set $\mathcal P^\bullet_\theta,\mathcal N^\bullet_\theta$ to be $E$-submodules of $E\otimes_{\sO_{X'}} F_*\Omega^\bullet_{X_{\log}/k}$ generated by $P^\bullet,N^\bullet$, respectively. Clearly, we have decompositions of $E$-modules
$$
\mathcal P^\bullet_\theta=\bigoplus_{0\leq m\leq n}\mathcal P^m_\theta,~\mathcal N^\bullet_\theta=\bigoplus_{0\leq m\leq n}\mathcal N^m_\theta,~F_*\Omega^m(H,\nabla)=\mathcal P^m_\theta\bigoplus\mathcal N^m_\theta.$$	
Note that the differential $\nabla$ of $F_*\Omega^*(H,\nabla)$ preserves $\mathcal P^\bullet_\theta$ as well as $\mathcal N^\bullet_\theta$. Set $\mathcal P^*_\theta:=(\mathcal P^\bullet_\theta,\nabla|_{\mathcal P^\bullet_\theta})$ and $\mathcal N^*_\theta:=(\mathcal N^\bullet_\theta,\nabla|_{\mathcal N^\bullet_\theta})$ which are subcomplexes of $F_*\Omega^*(H,\nabla)$. Obviously, $\varphi$ is an isomorphism onto $\mathcal P_\theta^*$. The following formula is the key input of Proposition \ref{Cartier iso standard}:
\begin{eqnarray}\label{differential formula}\nabla(1\otimes\beta)=\sum_{j\in J}(i_j
				+\theta_j)\otimes(\omega_j\wedge\beta),\end{eqnarray}
		where $e\in E,\ \beta=t_1^{i_1}\cdots t_n^{i_n}\cdot\omega_I\in B^{\bullet}$  and
		$J=\{1,\cdots,n\}-I$. Therefore, by the assumption (iv) of $(E,\theta)$, we have a decomposition of complexes of $E$-modules
$$
F_*\Omega^*(H,\nabla)=\mathcal P^*_\theta\bigoplus\mathcal N^*_\theta.
$$

	\begin{lemma}\label{acyclic lemma}
		  $\mathcal N_\theta^*$ is exact.
	\end{lemma}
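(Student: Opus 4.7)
The plan is to reduce acyclicity of $(EN_*, d_\theta)$ to the case $\theta = 0$ via a filtration on $E$, and to handle the case $\theta = 0$ by a K\"unneth computation matching the classical Cartier isomorphism.

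First, under the standard Frobenius lifting fixed in the reduction, the $\F_p$-complex $(B_*, d_0)$ splits as a tensor product
$$
(B_*, d_0) \ =\ \bigotimes_{i=1}^{r} C_i^{\bullet}\ \otimes_{\F_p}\ \bigotimes_{i=r+1}^{n} D_i^{\bullet},
$$
where $C_i^{\bullet}$ is the two-term complex $\F_p\{t_i^{j}\}_{0 \le j \le p-1} \to \F_p\{t_i^{j}\, d\log t_i\}_{0 \le j \le p-1}$ with $d_0(t_i^{j}) = j\, t_i^{j}\, d\log t_i$, and $D_i^{\bullet}$ is the analogous two-term complex with $\omega_i = dt_i$ and $d_0(t_i^{j}) = j\, t_i^{j-1}\, dt_i$. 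A direct inspection yields $H^\bullet(C_i^\bullet) = \F_p\{1,\, d\log t_i\}$ and $H^\bullet(D_i^\bullet) = \F_p\{1,\, t_i^{p-1}\, dt_i\}$, which are exactly the generators of the peaked summand $Q_*$. The K\"unneth formula over $\F_p$ then gives $\dim H^\bullet(B_*, d_0) = 2^n = \dim Q_*$, and since every element of $Q_*$ is $d_0$-closed, the complement $(N_*, d_0)$ is forced to be acyclic. Tensoring over $\F_p$ with $E$ preserves exactness, so $(EN_*, d_0 \otimes \id)$ is acyclic as well.

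For a general nilpotent Higgs field $\theta$, I introduce a finite descending filtration of $\sO_{X'}$-submodules
$$
E = E^{(0)} \supset E^{(1)} \supset \cdots \supset E^{(N)} = 0,
$$
where $E^{(k)}$ is generated by the images of all $k$-fold iterates $\theta_{j_1} \circ \cdots \circ \theta_{j_k}$ (well-defined since the $\theta_j$ pairwise commute by the Higgs condition); the nilpotency of level $\le p-1$ guarantees termination. Setting $\mathrm{Fil}^k(EN_*) := N_* \otimes_{\F_p} E^{(k)}$ and decomposing $d_\theta = d_0 + \delta$, where $\delta$ collects the $\theta$-contribution from the inverse Cartier transform, I observe that $d_0$ preserves each $\mathrm{Fil}^k$, while $\delta$ strictly raises the filtration degree because each of its summands applies some $\theta_j$, sending $E^{(k)}$ into $E^{(k+1)}$. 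The associated graded complex is therefore $(N_*, d_0) \otimes_{\F_p} \mathrm{gr}(E)$, which is acyclic by the previous paragraph. A standard finite-filtration spectral sequence argument then yields the acyclicity of $(EN_*, d_\theta)$.

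The main technical subtlety lies in verifying that $\delta$ respects the filtration for indices $j > r$, where $\zeta$ introduces a factor $t_j^{p-1}$. When combined with an existing $t_j^{i_j}$ with $i_j \ge 1$, the resulting exponent $i_j + p - 1$ exceeds the basis range $[0, p-1]$, forcing reduction modulo $t_j^p = F^*(t_j')$ and absorbing an $\sO_{X'}$-scalar $t_j'$ into the $E$-coefficient. Since each $E^{(k)}$ is an $\sO_{X'}$-submodule of $E$, this absorption preserves the filtration step, and the argument proceeds as stated.
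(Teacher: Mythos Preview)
Your argument is correct and proceeds along a genuinely different route from the paper.

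For $\theta=0$ the paper appeals to the classical Cartier isomorphism (packaged as Sublemma~\ref{sublemma1}) to conclude that $\F_pN_*$ is exact; your K\"unneth computation recovers this directly and is arguably more transparent. For general $\theta$, the paper's method is explicit linear algebra: one chooses bases of $\F_pN_{m-1},\F_pN_m,\F_pN_{m+1}$ so that the matrices of $d_{m-1},d_m$ have designated invertible square blocks $C_{m-1},A_m$, and then observes (via Sublemma~\ref{explicit expression of differential}) that passing from $d$ to $d_\theta$ only perturbs these blocks by nilpotent elements of the ideal $R^+\subset R$, so $(C_{m-1})_\theta,(A_m)_\theta$ remain invertible and one can write down an explicit preimage (Formula~\ref{solution vector theta}). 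Your filtration by the images $E^{(k)}$ together with a finite spectral sequence is cleaner and avoids this matrix bookkeeping; your last paragraph correctly handles the reduction of $t_i^{a_i+p-1}$ modulo $F^*(t_i')$ for $i>r$, which is exactly the place where $\delta$ could fail to respect the filtration.

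What the paper's approach buys, however, is the explicit solution formula~\ref{solution vector theta}, which is immediately reused in the proof of Lemma~\ref{lemma IC} (the intersection analogue). There one must show that if a closed element lies in $EN_{int,m}$ then the constructed preimage lies in $EN_{int,m-1}$; the argument tracks the multi-weight $w(\beta)$ through the entries of the adjugate of $(C_{m-1})_\theta$, and the concrete form of the preimage is essential. Your filtration by $E^{(k)}$ does not obviously interact with the multi-weight submodules $E_{w(\beta)}$, so while it settles Lemma~\ref{acyclic lemma} elegantly, extending it to the intersection subcomplex would require an additional idea.
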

It is crucial to consider first the case $(E,\theta)=(\mathcal O_{X'},0)$. Write $\mathcal P^*_0, \mathcal Q^*_0$ for $\mathcal P^*_\theta, \mathcal Q_\theta^*$ in this case. Let $V^\bullet$ be the $\mathbb F_p$-vector space of $\Gamma(X',F_*\Omega^\bullet_{X_{\log}/k})$ generated by $N^\bullet$. Clearly, $d (V^\bullet)\subset V^\bullet$. Let $V^*$ be the corresponding subcomplex of $\Gamma(X',F_*\Omega^*_{X_{\log}/k})$ with differential $d_{V^*}$. One observes that we have an identification of complexes
\begin{eqnarray}\label{N 0 V}
\mathcal N^*_0=\mathcal O_{X'}\otimes_{\mathbb F_p}V^*
\end{eqnarray}	
and an identification of graded $E$-modules
\begin{eqnarray}\label{N theta V}
~\mathcal N^\bullet_\theta=E\otimes_{\mathbb F_p}V^\bullet.\end{eqnarray}
Under the previous identification, we obtain $e\otimes_{\mathcal O_{X'}}\beta=e\otimes_{\mathbb F_p}\beta$ for $e\in E$ and $\beta\in N^\bullet$. We shall simply write $e\otimes \beta$ in what follows.
	\begin{sublemma}\label{sublemma1}
	The complex $V^*$ is exact. In particular, 
	\begin{eqnarray}\label{V^bullet}
	V^\bullet\stackrel{d_{V^*}}{\longrightarrow}V^\bullet\stackrel{d_{V^*}}{\longrightarrow}V^\bullet
	\end{eqnarray}
	is exact at the middle term. Furthermore, there exist decompositions
	$$
	N^\bullet=N^\bullet_{i1}\coprod N^\bullet_{i2}, ~i=1,2,3$$
	such that \eqref{V^bullet} becomes
	\begin{eqnarray}\label{decomposable exact}
	V^\bullet_{11}\bigoplus V^\bullet_{12}\xrightarrow{ \left(\begin{matrix}f_{11}&f_{12}\\
	f_{21}&f_{22}\end{matrix}\right)}{}V^\bullet_{21}\bigoplus V^\bullet_{22}\xrightarrow{ \left(\begin{matrix}g_{11}&g_{12}\\
	g_{21}&g_{22}\end{matrix}\right)}{}V^\bullet_{31}\bigoplus V^\bullet_{32}	\end{eqnarray}	
	where $f_{22},g_{11}$ are isomorphisms. Here
$$
f_{ij}:V^\bullet_{1j}\to V^\bullet_{2i},~g_{ij}:V^\bullet_{2j}\to V^\bullet_{3i},~i,j=1,2.
$$

	\end{sublemma}
	\begin{proof}
		 By the Cartier isomorphism (see \cite[Th\'eor\`eme 1.2]{DI}), one knows that $\mathcal N^*_0$ is exact. Hence $V^*$ is exact. Clearly, the exactness of $V^*$ is equivalent to \eqref{V^bullet} is exact at the middle place. Set $r:=\mathrm{rank}(d_{V^*})$ and put
$$
d_{V^*}(\beta)=\sum_{\gamma\in N^\bullet}a_{\gamma\beta}\gamma,~\beta\in N^\bullet,~a_{\gamma\beta}\in\mathbb F_p.
$$	
Basic linear algebra shows that there are subsets 
\begin{eqnarray}\label{basis for N}
\{\beta_1,\cdots,\beta_r\}\subset N^\bullet,~\{\gamma_1,\cdots,\gamma_r\}\subset N^\bullet
\end{eqnarray}	 
such that $\mathrm{det}(a_{\gamma_i\beta_j})_{1\leq i,j\leq r}\neq0$. Set $V^\bullet_{21},V^\bullet_{22},V^\bullet_{31},V^\bullet_{32}$ to be subspaces of $V^\bullet$ generated by
$$
N^\bullet_{21}:=\{\beta_1,\cdots,\beta_r\},~N^\bullet_{22}:=N^\bullet-\{\beta_1,\cdots,\beta_r\},~N^\bullet_{31}:=\{\gamma_1,\cdots,\gamma_r\},~N^\bullet_{32}:=N^\bullet-\{\gamma_1,\cdots,\gamma_r\}$$
over $\mathbb F_p$, respectively. Clearly, $g_{11}$ is an isomorphism. 

Note that the composite of $\mathrm{Ker}(d_{V^*})\subset V^\bullet\twoheadrightarrow V^\bullet_{22}$ is an isomorphism. Since $\mathrm{Im}(d)=\mathrm{Ker}(d)$, it follows that the composite of
$$
V^\bullet\stackrel{d_{V^*}}{\longrightarrow}V^\bullet\twoheadrightarrow V^\bullet_{22}
$$
is surjective. By a similar argument as above, there is a decomposition
$$
N^\bullet=N^\bullet_{11}\coprod N^\bullet_{12}, 
$$
such that the composite of
$$
V_{12}^\bullet\subset V^\bullet\stackrel{d_{V^*}}{\longrightarrow}V^\bullet\twoheadrightarrow V^\bullet_{22}
$$
is an isomorphism. Here $V^\bullet_{12}$	is a subspace of $V^\bullet$ generated by $N^\bullet_{12}$ over $\mathbb F_p$. Set $V^\bullet_{11}$ to be the subspace of $V^\bullet$ generated by $N^\bullet_{11}$ over $\mathbb F_p$. Clearly, $f_{22}$ is an isomorphism. This completes the proof.
	\end{proof}
	We proceed now to the proof of Lemma \ref{acyclic lemma}.
	\begin{proof}
	 Clearly, the exactness of $\mathcal N^*_\theta$ is equivalent to 
\begin{eqnarray}\label{N theta}
\mathcal N^\bullet_\theta\stackrel{\nabla|_{\mathcal N^\bullet_\theta}}{\longrightarrow}\mathcal N^\bullet_\theta\stackrel{\nabla|_{\mathcal N^\bullet_\theta}}{\longrightarrow}\mathcal N^\bullet_\theta
\end{eqnarray}	
is exact at the middle term. By using the identification \eqref{N theta V}, \eqref{N theta} can be rewritten as
$$
E\otimes V^\bullet_{11}\bigoplus E\otimes V^\bullet_{12}\xrightarrow{ \left(\begin{matrix}\tilde f_{11}&\tilde f_{12}\\
	\tilde f_{21}&\tilde f_{22}\end{matrix}\right)}{}E\otimes V^\bullet_{21}\bigoplus E\otimes V^\bullet_{22}\xrightarrow{ \left(\begin{matrix}\tilde g_{11}&\tilde g_{12}\\
	\tilde g_{21}&\tilde g_{22}\end{matrix}\right)}{}E\otimes V^\bullet_{31}\bigoplus E\otimes V^\bullet_{32}.$$
By the assumption (ii) of $(E,\theta)$, $E\to\mathcal O_{X'}$ induces a projection from the above complex to $\mathcal N^*_0$. Consequently, $\mathrm{det}(\tilde g_{11}),\mathrm{det}(\tilde f_{22})\in\Gamma(X',E)$ are mapped to $\mathrm{det}( g_{11}),\mathrm{det}(f_{22})\in\Gamma(X',\mathcal O_{X'})$ which are units. By the assumption (iii) of $(E,\theta)$, $\mathrm{det}(\tilde g_{11}),\mathrm{det}(\tilde f_{22})\in\Gamma(X',E)$ are units. In particular, $\tilde g_{11},\tilde f_{22}$ are isomorphisms, from which we conclude that \eqref{N theta} is exact at the middle term. This completes the proof.
	\end{proof}
	Next we are going to adapt the previous arguments to the intersection complexes.

	\begin{definition}
		Let $\beta=t_1^{i_1}\cdots t_n^{i_n}\cdot\omega_I\in B^{m},1\leq m\leq n$. One defines the multi-weight of $\beta$ by
		$$w(\beta):=(\epsilon_{1},\cdots,\epsilon_n)\in\{0,1\}^n,$$
		where $\epsilon_{j}=1$ if $j\in I$ and $i_j=0$, otherwise $\epsilon_{j}=0$. For any $w=(w_{1},\cdots,w_n)\in \mathbb{Z}^n$, we define
	$$
	E_w:=E_{0\vee w}=\sum_{\alpha+\beta=0\vee w,\alpha,\beta\in \N^n}t^{\alpha}\theta^{\beta}E,~0\vee w=(\mathrm{max}\{0,w_1\},\cdots,\mathrm{max}\{0,w_n\})\in\mathbb N^n.
	$$	
	\end{definition}
	\begin{remark}\label{weight order}
		 For two elements $w,w'\in \Z^n$, we have $E_{w}E_{w'}\subset E_{w+w'}$.\end{remark}
	The key ingedients to show that $\varphi_{int}$ is a quasi-isomorphism are the following:
	\begin{itemize}
\item[(1)\ ]	Write $\nabla(1\otimes \beta)=\sum_{\gamma\in B^\bullet}\vartheta_{\gamma\beta}\otimes\gamma,~\beta\in B^\bullet$. Then $\vartheta_{\gamma\beta}\in E_{w(\gamma)-w(\beta)}$;	
\item[(2)\ ] $\sum_{\beta\in N_{12}^\bullet}w(\beta)=\sum_{\beta\in N_{22}^\bullet}w(\beta),~\sum_{\beta\in N_{21}^\bullet}w(\beta)=\sum_{\beta\in N_{31}^\bullet}w(\beta)$;
\item[(3)\ ] The decomposition $F_*\Omega^\bullet(H,\nabla)=\bigoplus_{\beta\in B^\bullet}E\otimes \beta$ is compatible with the intersection condition.
	\end{itemize}
The first two ingredients follow from the following information about $\vartheta_{\gamma\beta}$ which can be obtained directly from the formula \eqref{differential formula}: (a) it belongs to $E_{w(\gamma)-w(\beta)}$, (b) it is either invertible or nilpotent, and (c) $w(\beta)=w(\gamma)$ if it is a unit. Clearly, (a) implies (1). By using \eqref{basis for N} and the proof of Lemma \ref{acyclic lemma}, we know that $\mathrm{det}(\vartheta_{\gamma_i\beta_j})_{1\leq i,j\leq r}=\mathrm{det}(\tilde g_{11})$ is a unit. According to (b), there is a permutation $\sigma$ of $\{1,\cdots,n\}$ such that $\prod^n_{i=1}\vartheta_{\gamma_i\beta_{\sigma(i)}}$ is a unit. Taking (c) into account, one has 
$$\sum_{\beta\in N_{12}^\bullet}w(\beta)=\sum_{i=1}^nw(\beta_{\sigma(i)})=\sum_{i=1}^nw(\gamma_i)=\sum_{\beta\in N_{22}^\bullet}w(\beta).$$
The other equality in (2) can be obtained similarly. So (2) follows. (3) is the following 
	\begin{lemma}\label{identification int}
				$F_*\Omega^\bullet_{int}(H,\nabla)=\bigoplus_{\beta\in B^\bullet}E_{w(\beta)}\otimes \beta$.
			\end{lemma}
	\begin{proof}
This lemma follows from the following facts:
\begin{itemize}
\item $F_*\Omega_{int}^\bullet(H,\nabla)$ is an $F_*\Omega_{X/k}^\bullet$-submodule of $F_*\Omega^\bullet(H,\nabla)$;
\item $\mathrm{RHS}:=\bigoplus_{\beta\in B^\bullet}E_{w(\beta)}\otimes\beta$  is an $F_*\Omega_{X/k}^\bullet$-submodule of $F_*\Omega^\bullet(H,\nabla)$;
\item As $F_*\Omega_{X/k}^\bullet$-module, $F_*\Omega_{int}^\bullet(H,\nabla)$ and $\bigoplus_{\beta\in B^\bullet}E_{w(\beta)}\otimes \beta$ are generated by $$\sum_{I\subset\{1,\cdots,n\}}\theta_IE\otimes\omega_I.$$
\end{itemize}				
Let us check the facts above. The first fact is obvious. Note that as an $\mathcal O_{X'}$-algebra, $F_*\Omega^\bullet_{X_{\log}/k}$ is generated by
$t_i,t_i\omega_i,1\leq i\leq n$. Thus the second fact follows from the following obvious inclusions:
$$
\mathrm{RHS}\cdot t_i\subset\mathrm{RHS},\quad \mathrm{RHS}\wedge(t_i\omega_i)\subset \mathrm{RHS},~1\leq i\leq n.$$		
For the last fact, it suffices to observe that $[t_i\partial_i,F^*\theta_j]=0$ for $i\neq j$ and
$$
H_w=\sum_{\alpha+\beta=w,\alpha,\beta\in\mathbb N^n}t^\alpha\prod^n_{i=1}(t_i\partial_i+F^*\theta_i)^{\beta_i} F^*E=\sum_{\alpha+\beta=w,\alpha,\beta\in\mathbb N^n}t^\alpha\prod_{i=1}^n(F^*\theta_i)^{\beta_i} F^*E,$$
where $\nabla_i=t_i\partial_i+F^*\theta_i$ for $1\leq i\leq n$ and $\beta=(\beta_1,\cdots,\beta_n)$.
		This completes the proof.
	\end{proof}	
The above lemma shows that $\varphi_{int}$ is an isomorphism onto $\mathcal P^*_\theta\cap F_*\Omega_{int}^*(H,\nabla)$ and
$$
F_*\Omega_{int}^*(H,\nabla)=(\mathcal P^*_\theta\cap F_*\Omega_{int}^*(H,\nabla))\bigoplus(\mathcal N^*_\theta\cap F_*\Omega_{int}^*(H,\nabla)).$$
Set $\mathcal N_{\theta,int}^*:=\mathcal N^*_\theta\cap F_*\Omega_{int}^*(H,\nabla)$. In order to show that $\varphi_{int}$ is a quasi-isomorphism, it remains to show the following 
	\begin{lemma}\label{acyclic int}
		$\mathcal N_{\theta,int}^{*}$ is exact.
	\end{lemma}
	\begin{proof}
Note that $\mathcal N_{\theta,int}^{*}=\bigoplus_{\beta\in N^\bullet}E_{w(\beta)}\otimes\beta$.	According to the proof of Lemma \ref{acyclic lemma}, it suffices to show that $\tilde g_{11}$ (resp. $\tilde f_{22}$) induces an isomorphism from $\bigoplus_{\beta\in N_{21}^\bullet}E_{w(\beta)}\otimes\beta$ to $\bigoplus_{\beta\in N_{31}^\bullet}E_{w(\beta)}\otimes\beta$ (resp. from $\bigoplus_{\beta\in N_{12}^\bullet}E_{w(\beta)}\otimes\beta$ to $\bigoplus_{\beta\in N_{22}^\bullet}E_{w(\beta)}\otimes\beta$). We prove this statement for $\tilde g_{11}$ (it can be discussed similarly for $\tilde f_{22}$). In other words, we need to show that 
$$
\sum_{i=1}^ru_i\otimes\beta_i:=\tilde g_{11}^{-1}(\sum_{i=1}^rv_i\otimes\gamma_i)\in \bigoplus_{i=1}^rE_{w(\beta_i)}\otimes\beta_i=\bigoplus_{\beta\in N_{12}^\bullet}E_{w(\beta)}\otimes\beta$$
for $v_i\in E_{w(\gamma_i)}$	with $1\leq i\leq r$. By symmetry, it suffices to check that $u_1\in E_{w(\beta_1)}$. Set $(\vartheta^\sharp_{ij}):=(\vartheta_{\gamma_i\beta_j})^{-1}$. For $r=1$, there is nothing to prove. Assume $r\geq2$. By basic linear algebra, we have
$$
u_1=\sum_{j=1}^r\vartheta^\sharp_{1j}v_j.
$$
We claim that $\vartheta^\sharp_{1j}v_j\in E_{w(\beta_1)}$ holds for $1\leq j\leq r$. By symmetry again, it suffices to show that $\vartheta^\sharp_{11}v_1\in E_{w(\beta_1)}$. Since
$$
\vartheta^\sharp_{11}v_1=\sum_{\sigma}\mathrm{sgn}(\sigma)\prod_{i=2}^r\vartheta_{\gamma_i\beta_{\sigma(i)}}v_1,
$$
coupled with the key ingredients (1), (2) and Remark \ref{weight order}, one has
$$
\vartheta^\sharp_{11}v_1\in\sum_{\sigma}\prod_{i=2}^rE_{w(\gamma_i)-w(\beta_{\sigma(i)})}E_{w(\gamma_1)}\subset\sum_{\sigma}E_{w(\beta_1)}=E_{w(\beta_1)}.
$$
Here $\sum_\sigma$ means the summation over all permutations of $\{2,\cdots,r\}$. This completes the proof.
	\end{proof}

	\section{$E_1$-degeneration theorem}
	Whether the Hodge to de Rham spectral sequence associated to a certain de Rham complex degenerates at $E_1$ is one of central problems in Hodge theory. We shall address it in this section, as a natural continuation of our study on de Rham-Higgs comparison theorem.
	
	\subsection{Periodic de Rham bundles}
	Let us first remark that the works \cite{LSZ}, \cite{LSZ1} extend in an obvious way to the current logarithmic setting\footnote{As we have demonstrated in Appendix \cite{LSYZ}, which extends the major part of \cite{LSZ} in the case of SNCD, the extension can be realized by replacing Frobenius liftings by log Frobenius liftings and the Taylor formula by its log analogue (\cite[Formula 6.7.1]{Kato}).}. In the next definition, which was introduced in \cite{LSZ1} when $D$ is absent, the notation $C^{-1}$ (by abuse of notation) is actually the composite of the inverse Cartier transform using the construction in \S3.1 with the base change functor $\pi^*$, where $\pi:X'\to X$ is the natural morphism.
	\begin{definition}\label{logarithmic periodic de Rham}
		A de Rham module over $X_{\log}/k$ is a triple $(H,\nabla,Fil)$, where $H$ is a coherent $\sO_X$-module, $\nabla$ is logarithmic integrable connection on $H$ and $Fil$ is a Griffiths transverse filtration on $(H,\nabla)$. It is said to be periodic, if there exist a positive integer $f$ and a sequence of Griffiths transverse filtrations $\{Fil_{i}\}_{0\leq i\leq f-1}$ of level $\leq p-1$ inductively defined on
		$$
		(H_{i},\nabla_{i})=C^{-1}\circ Gr_{Fil_{i-1}}(H_{i-1},\nabla_{i-1})
		$$
		with $(H_0,\nabla_0,Fil_0)=(H,\nabla,Fil)$ such that the module with integrable connection $(H_{f},\nabla_{f})$ is isomorphic to the initial module with integrable connection $(H,\nabla)$. A graded Higgs module $(E,\theta)\in \HIG_{\leq p-1}(X_{\log}/k)$ is said to be periodic if $E$ is coherent and there exists a positive number $f$ and a sequence of Griffiths transverse filtrations $\{Fil_{i}\}_{0\leq i\leq f-1}$ of level $\leq p-1$ inductively defined on
		$$
		(H_{i},\nabla_{i})=C^{-1}(E_{i},\theta_{i})
		$$
		with $(E_0,\theta_0)=(E,\theta)$ and $(E_{i+1},\theta_{i+1})=Gr_{Fil_i}(H_{i},\nabla_{i})$ such that $(E_{f},\theta_{f})$ is isomorphic to the initial Higgs module $(E,\theta)$ as graded Higgs modules. If the positive integer $f$ in the definition can be chosen to be one, we call it to be one-periodic.
	\end{definition}
	   By the equivalence of categories Theorem \ref{equivalence} (i), the graded Higgs module $Gr_{Fil}(H,\nabla)$ for a periodic de Rham module $(H,\nabla,Fil)$ is periodic and conversely, the de Rham module $(C^{-1}(E,\theta),Fil_0)$ appearing in the definition of a periodic Higgs module is periodic. By a similar argument as \cite[Proposition 3.12]{LSZ1}, a periodic Higgs module over $X_{\log}/k$ is locally free, and a periodic de Rham module is also locally free (and the filtration is a filtration of locally free submodules and locally split). Therefore, we shall speak of periodic de Rham/Higgs bundles instead of modules from now on.
	
	\begin{lemma}\label{periodicity stable under etale base change}
		Let $\tilde g: \tilde Y\to \tilde X$ be an \'etale morphism over $W_2(k)$ and $g$ its mod $p$ reduction. Let $(H,\nabla,Fil)$ (resp. $(E,\theta)$) be a (one-)periodic de Rham (resp. Higgs) module over $X_{\log}/k$. Then its pull-back via $g$ is again (one)-periodic.
	\end{lemma}
	\begin{proof}
		This is a direct consequence of Lemma \ref{inverse cartier and etale base change}.
	\end{proof}
	The following definition is the logarithmic analogue of \cite[Definition 4.6]{OV}.
	\begin{definition}
	A strict $p$-torsion logarithmic Fontaine module over $X_{\log}/k$ is a quadruple $(H,\nabla,Fil,\psi)$, where $(H,\nabla,Fil)$ is a de Rham bundle over $X_{\log}/k$ and
	$$
	\psi: C^{-1}\circ Gr_{Fil}(H,\nabla)\cong (H,\nabla)
	$$
	is a morphism of modules with integrable connection.
	\end{definition}
In the paragraph following \cite[Definition 4.6]{OV}, Ogus-Vologodsky pointed out the truth of the following result when $D=\emptyset$.  Let $X_{W(k)}$ be a smooth $W(k)$-scheme and $D_{W(k)}\subset X_{W(k)}$ be a reduced normal crossing divisor relative to $W(k)$. Faltings introduced the category $MF^{\nabla}_{[0,p-1]}(X_{W(k)},D_{W(k)})$ in \cite[Chapter IV, Section c)]{Fa} (see \cite[Section 1]{SXZ} for more details.).
\begin{proposition}\label{one-periodic}
	Let $(H,\nabla,Fil,\Phi)$ be a strict $p$-torsion object in $MF^{\nabla}_{[0,p-1]}(X_{W(k)},D_{W(k)})$. Then the relative Frobenius $\Phi$ induces a global isomorphism
	$$
	\psi: C^{-1}\circ Gr_{Fil}(H,\nabla)\cong (H,\nabla),
	$$
	so that $(H,\nabla,Fil,\psi)$ is a strict $p$-torsion logarithmic Fontaine module over $X_{\log}/k$, where $C^{-1}$ respects the $W_2(k)$-lifting $(X_{W(k)}\times W_2(k), D_{W(k)}\times W_2(k))$. In particular, $(H,\nabla,Fil)$ forms a one-periodic de Rham bundle over $X_{\log}/k$.
	\end{proposition}
	\begin{proof}
	The remark of Ogus-Vologodsky has been realized in \cite[Proposition 4.1]{LSZ}. The same proof works verbatim in the logarithmic case, sufficing to replacing the Taylor formula used therein with its logarithmic analogue \cite[Formula 6.7.1]{Kato}.
	\end{proof}

	\subsection{Intersection adaptedness theorem}
In this subsection, we are going to establish an intersection adaptedness theorem for one-periodic de Rham bundles, which is the intersection version in positive characteristic of the $L^2$-adaptedness theorem in the complex analytic setting which was shown in the one-variable case by Zucker (\cite[\S5]{Zuc}) (his tool is the $SL_2$-orbit theorem of Schmid \cite{Schmid}).

 Let $(H,\nabla,Fil)$ be a de Rham bundle over $X_{\log}/k$, where the level of $Fil$ is assume to be $l$.  We extend the filtration on $H$ by setting
 $$
 Fil^{i}=H, \quad i<0; \quad Fil^{l+i}=0, \quad i>0.
 $$
 There is a natural filtration
 $$
 \Omega^*(H,\nabla)=Fil^0\Omega^*(H,\nabla)\supset Fil^1\Omega^*(H,\nabla)\supset \cdots \supset Fil^{l+n}\Omega^*(H,\nabla)\supset 0
 $$
 of the complex $\Omega^*(H,\nabla)$, where
 $$
 Fil^i\Omega^j(H,\nabla):=Fil^{i-j}\otimes \Omega^{j}_{X_{\log}/k}.
 $$
 Since $\Omega_{X_{\log}/k}$ is locally free, there is a natural identification of complexes
 $$
 \Omega^*Gr_{Fil}(H,\nabla)=Gr_{Fil}\Omega^*(H,\nabla).
 $$
 Now we endow $\Omega^*_{int}(H,\nabla)\subset \Omega^*(H,\nabla)$ with the induced filtration.  Then, it is natural to ask whether the following commutativity holds:
	$$
	\Omega^*_{int}Gr_{Fil}(H,\nabla)=Gr_{Fil}\Omega^*_{int}(H,\nabla),
	$$
	where both sides are regarded as subcomplexes of $ \Omega^*Gr_{Fil}(H,\nabla)$.  The equality holds away from the divisor $D$ at infinity, but not around $D$ in general, as the following example shows.
	\begin{example}
		Take $X=\Spec(k[t])$ and $D=\textrm{div}(t)$. Set $H=\mathcal{O}_Xe_1\oplus\mathcal{O}_Xe_2$,
		$$
		Fil^0H=H,~ Fil^1H=\mathcal{O}_Xe_1,~ Fil^2H=0.
		$$
		Define a logarithmic connection $\nabla$ over $H$ by the following formula:
		$$
		\nabla(e_1,e_2)=(e_1,e_2)\left(\begin{matrix}0 & d\log t\\
			0 & 0
		\end{matrix}\right).
		$$
		One computes that the degree one term of $\Omega^*_{int}(H,\nabla)$ equals
		$$
		Fil_1H\otimes\Omega_{X/k}(\log D)+H\otimes \Omega_{X/k} (\subset H\otimes\Omega_{X/k}(\log D)).
		$$
		So the degree one term of $Gr_{Fil}\Omega^*_{int}(H,\nabla)$ takes the form
		$$
		E^{1,0}\otimes \Omega_{X/k}(\log D)+ E\otimes \Omega_{X/k},
		$$
		where $(E=E^{1,0}\oplus E^{0,1},\theta)$ is the associated graded Higgs module. But since $\theta=0$ as seen easily, it follows that the degree one term of $\Omega^*_{int}Gr_{Fil}(H,\nabla)$ takes the form
		$E\otimes \Omega_{X/k}$, which is a strict submodule of the degree one term of $Gr_{Fil}\Omega^*_{int}(H,\nabla)$.

	\end{example}

	\begin{theorem}\label{inter comm}
		Let $(H,\nabla,Fil)$ be a one-periodic de Rham bundle. Then the following equality of subcomplexes of $ \Omega^*Gr_{Fil}(H,\nabla)$ holds:
		$$Gr_{Fil}\Omega_{int}^{*}(H,\nabla)=\Omega_{int}^{*}Gr_{Fil}(H,\nabla).$$
	\end{theorem}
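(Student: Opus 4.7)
Since the statement is \'etale-local on $X$, I fix a closed point $x\in D$ and work in a neighbourhood admitting a special system of log coordinates $t_{1},\dots,t_{n}$ at $x$ with $D = \{t_{1}\cdots t_{r}=0\}$, together with a lift to $W_{2}(k)$ carrying the standard log Frobenius lifting $\tilde F(t_{i}) = t_{i}^{p}$. Setting $(E,\theta):=Gr_{Fil}(H,\nabla)$, the one-periodic hypothesis gives a local isomorphism $(H,\nabla)\cong C^{-1}(E,\theta)$. By the explicit formula of \S3.1 this identifies $H$ with $F^{*}E$ under which
$$\nabla \;=\; \nabla_{can} + \sum_{i=1}^{r} F^{*}\theta_{i}\otimes d\log t_{i} + \sum_{i=r+1}^{n} t_{i}^{p-1}\,F^{*}\theta_{i}\otimes dt_{i}.$$
As $\nabla_{can}$ contributes no log pole, the log residues of $\nabla$ along $D$ are precisely $F^{*}\theta_{i}$ for $1\le i\le r$, so every composite $\nabla_{I}$ appearing in the intersection subcomplex is completely explicit.

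For the easier inclusion $Gr_{Fil}\Omega_{int}^{*}(H,\nabla)\subseteq \Omega_{int}^{*}(E,\theta)$, a local generator of $F^{k}\cap\Omega_{int}^{s}(H,\nabla)$ takes the form $\nabla_{I}(e)\otimes d\log t_{I}\wedge \eta$ with $e\in Fil^{k-s+|I|}H$, $I\subseteq\{1,\dots,r\}$ and $\eta$ a regular $(s-|I|)$-form. Griffiths transversality gives $\nabla_{I}(e)\in Fil^{k-s}H$, and the identity $Gr\,\nabla_{I}=\theta_{I}$ identifies its class in $Gr^{k-s}H = E^{k-s}$ with $\theta_{I}(\bar e)$, where $\bar e := [e]\in E^{k-s+|I|}$. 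Thus the associated graded form $\theta_{I}(\bar e)\otimes d\log t_{I}\wedge\eta$ is a standard generator of $\Omega_{int}^{s}(E,\theta)$.

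The reverse inclusion is where the one-periodic hypothesis is essential. Every generator $\theta_{I}(\bar e)\otimes d\log t_{I}\wedge\eta$ of the right-hand side with $\bar e\in E^{k}$ is hit by any $Fil^{k}$-lift $e$ of $\bar e$; the real content is to show that $Gr_{Fil}\Omega_{int}^{s}(H,\nabla)$ contains no ``spurious'' classes beyond those produced in this way. A potential source of spuriousness is an element $\nabla_{I}(e)\otimes d\log t_{I}\wedge\eta$ with $e\in Fil^{k-1}H\setminus Fil^{k}H$ along which $\nabla_{I}(e)$ accidentally sits in $Fil^{k-|I|}H$, one filtration level deeper than Griffiths transversality alone predicts. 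Here the local formula $\nabla_{I}=F^{*}\theta_{I}$ together with the splitting $H\cong\bigoplus_{i}F^{*}E^{i}$ induced by the grading of $E$ intervenes: such an accidental drop is equivalent to $\theta_{I}(\bar e)=0$ in $E^{k-|I|-1}$, and the remaining higher-filtration components of $\nabla_{I}(e)$, which originate solely from the mixed terms between $\nabla_{can}$ and $F^{*}\theta$, are by a direct Frobenius-pullback computation rewritten as sums of standard generators $\nabla_{I'}(e')\otimes d\log t_{I'}\wedge\eta'$ of the intersection subcomplex. Controlling this bookkeeping of filtration drops is the main technical obstacle: without one-periodicity the residues of $\nabla$ are unconstrained, and the two complexes need not agree at the boundary. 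Once the local equality is established, Galois descent as in Lemma~\ref{independence of special local coord} globalises it to a Zariski neighbourhood of $x$, hence to all of $X$.
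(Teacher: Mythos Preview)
There is a genuine gap, and it stems from conflating two different structures on $H$. The isomorphism $\psi\colon C^{-1}(E,\theta)\cong (H,\nabla)$ supplied by one-periodicity is an isomorphism of flat bundles only; it carries no information about the Hodge filtration $Fil$ on $H$. Consequently the splitting $H\cong\bigoplus_i F^{*}E^{i}$ you invoke has no a priori relation to $Fil$, and the step ``the splitting \dots\ intervenes: such an accidental drop is equivalent to $\theta_I(\bar e)=0$'' is unjustified. The paper exploits exactly this tension: in Lemma~\ref{local} it works simultaneously with a filtered-free basis $\{e_i\}$ adapted to $Fil$ and a second basis $\{f_i=\psi(F_X^{*}\bar e_i)\}$ adapted to the Frobenius structure, and compares the two representation matrices of $\res_{D_I}\nabla$ to deduce that the residues are \emph{special} lower-triangular (Definition~\ref{triangular}). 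That comparison, together with a Fitting-ideal argument (Lemma~\ref{bundle map}) showing constancy of residue ranks along strata, is what replaces your direct use of the splitting.

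A second, related problem is that you have the easy and hard inclusions reversed. Lemma~\ref{inclusion of intersection subcomplex} shows $\Omega_{int}^{*}(E,\theta)\subseteq Gr_{Fil}\Omega_{int}^{*}(H,\nabla)$ holds for \emph{any} de Rham sheaf, with no periodicity needed; this is the ``every generator $\theta_I(\bar e)\otimes d\log t_I\wedge\eta$ is hit'' direction you label as hard. The genuinely hard inclusion is $Gr_{Fil}\Omega_{int}^{*}(H,\nabla)\subseteq\Omega_{int}^{*}(E,\theta)$, and your argument for it assumes precisely what must be proved: you assert that an element of $Fil^{k}\cap\Omega_{int}^{s}(H,\nabla)$ is a sum of terms $\nabla_I(e)\otimes d\log t_I\wedge\eta$ with each $e\in Fil^{k-s+|I|}H$, but a priori one only knows the \emph{sum} lies in $Fil^{k-s}H\otimes\omega^{s}$, not the individual summands. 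Establishing this decomposition is the content of equation~\eqref{suff cond}, which the paper proves by a downward induction on the codimension of strata, using the linear-algebra Sublemma~\ref{uppertriangle} (governing how special lower-triangular maps interact with filtrations) at each step. Your treatment of ``spurious classes'' only contemplates single-term filtration drops and never addresses the possibility of cancellation among several terms, which is where the real difficulty lies.
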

	To approach the problem, it is good to consider it in a general setting. Let $S$ be a noetherian regular scheme and $D\subset X$ be a reduced NCD relative to $S$. Let $(H,\nabla,Fil)$ be a de Rham module over $X_{\log}/S$. Set $(E=\bigoplus_iE^i,\bigoplus_i\theta^i)$ to be the graded Higgs module.
	\begin{lemma}\label{inclusion of intersection subcomplex}
		Notation as above. Then one has natural inclusions of complexes
		$$
		\Omega^*_{int}(E,\theta)\subset Gr_{Fil}\Omega_{int}^{*}(H,\nabla)\subset \Omega^{*}(E,\theta).
		$$
	\end{lemma}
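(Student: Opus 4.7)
The plan is to handle the two inclusions separately: the right one is essentially formal, while the left one reduces, in local coordinates, to a direct verification using Griffiths transversality.

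For the right inclusion $Gr_{Fil}\Omega^*_{int}(H,\nabla)\subset \Omega^*(E,\theta)$, I would simply observe that the Hodge filtration on $\Omega^*(H,\nabla)$ restricts to a filtration on the subcomplex $\Omega^*_{int}(H,\nabla)$, so passing to the associated graded yields an injection of complexes into $Gr_{Fil}\Omega^*(H,\nabla)$, and the latter is tautologically identified with $\Omega^*(E,\theta)$ (the differential on the graded being precisely the Higgs field coming from Griffiths transversality). Nothing here is specific to the intersection condition.

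For the left inclusion $\Omega^*_{int}(E,\theta)\subset Gr_{Fil}\Omega^*_{int}(H,\nabla)$, the question is local, so I would fix a point $x\in D$ and a special system of log coordinates $\{t_1,\dots,t_n\}$ in an \'etale neighborhood $U$ of $x$, with $D$ cut out by $t_1\cdots t_r=0$. Decompose $\nabla=\sum_i\nabla_i\otimes d\log t_i+\sum_j\nabla_j\otimes dt_j$ and similarly for $\theta$; by Griffiths transversality each $\nabla_i$ sends $Fil^q H$ into $Fil^{q-1}H$, so on the graded it induces $\theta_i$. A typical local generator of $\Omega^*_{int}(E,\theta)$ has the form $\theta_I(e)\otimes d\log t_I\wedge dt_K$ with $I\subset\{1,\dots,r\}$, $K\subset\{r+1,\dots,n\}$, $e\in E^q$ for some $q$. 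Writing $e=[h]$ for some local lift $h\in Fil^q H$, the element $\nabla_I(h)\otimes d\log t_I\wedge dt_K$ lies in $Fil^{q-|I|}\bigl(H\otimes\Omega^{|I|+|K|}_X(\log D)\bigr)\cap \Omega^{|I|+|K|}_{int}(H,\nabla)$ by Griffiths transversality together with the very definition of the intersection subcomplex, and by construction it projects to $\theta_I(e)\otimes d\log t_I\wedge dt_K$ in $Gr_{Fil}\Omega^*_{int}(H,\nabla)$.

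This shows that every generator of $\Omega^*_{int}(E,\theta)$ lies in $Gr_{Fil}\Omega^*_{int}(H,\nabla)$, giving the left inclusion. I do not anticipate a genuine obstacle here; the only point requiring a moment of care is that the lift $h$ of $e$ can be chosen \emph{in the Hodge filtration at the correct level}, which is just the fact that $E^q=Fil^qH/Fil^{q+1}H$ and the filtration is locally split for a de Rham bundle. The lemma is thus a formal consequence of Griffiths transversality, and crucially does not yet require periodicity of $(H,\nabla,Fil)$ (which will enter only when one tries to upgrade this inclusion to an equality in Theorem \ref{inter comm}).
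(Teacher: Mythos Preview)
Your proposal is correct and follows essentially the same approach as the paper: the right inclusion is dispatched formally via $Gr_{Fil}\Omega^*_{int}(H,\nabla)\subset Gr_{Fil}\Omega^*(H,\nabla)=\Omega^*(E,\theta)$, and the left one is verified locally by lifting the generators $\theta_I(e)\otimes d\log t_I$ to $\nabla_I(h)\otimes d\log t_I$ using Griffiths transversality. The paper organizes the local argument through the multi-weight decomposition $\Omega^*_{int}(H,\nabla)=\bigoplus_{I,J}H_{w(\beta_{I,J})}\beta_{I,J}$ and reduces to $E_{w}\subset Gr_{Fil}H_{w}$ component-wise, but this is just a repackaging of your direct lifting argument; in particular your remark about local splitness is not needed, since only surjectivity of $Fil^qH\to E^q$ is used.
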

	\begin{proof}
		Note the second inclusion is obvious for we have
		$$
		Gr_{Fil}\Omega_{int}^{*}(H,\nabla)\subset Gr_{Fil}\Omega^{*}(H,\nabla)=\Omega^*Gr_{Fil}(H,\nabla)=\Omega^*(E,\theta).
		$$
		To show the first inclusion, we notice first that it is an \'etale local problem and holds trivially outside $D$. So we may assume $\tilde X$ admits an admissible system of local coordinates $((\tilde t_1,\cdots,\tilde t_n),r)$ with $r>0$. Let $\nabla_i,\theta_i,1\leq i\leq n$ be the components of $\nabla,\theta$ with respect to the admissible system. By Proposition \ref{multiweight Higgs}, one has
		$$
		\Omega_{int}^{\bullet}(H,\nabla)=\bigoplus_{\beta\in P^\bullet}H_{w(\beta)}\otimes\beta
		$$
		and
		$$
		\Omega_{int}^{\bullet}(E,\theta)=\bigoplus_{\beta\in P^\bullet}E_{w(\beta)}\otimes\beta.
		$$
		As the filtration $Fil$ on $\Omega_{int}^{\bullet}(H,\nabla)$ is compatible with the direct decomposition, it is equivalent to show $E_{w(\beta)}\subset Gr_{Fil}H_{w(\beta)}$ for each $\beta\in P^\bullet$. Let $w(\beta)=w^I$ for some $I\subset \{1,\cdots,r\}$. Then by Griffiths transversality, we have for any fixed $a$ the following inclusion:
		\begin{eqnarray}\label{inclusion}
			\sum_{K\subset I}t_{I-K}\nabla_{K}Fil^{a+|K|}H&\subseteq& Fil^{a}\sum_{K\subset I}t_{I-K}\nabla_{K}H=Fil^{a}H_{w(\beta)}.
		\end{eqnarray}
		By the commutativity of the following diagram (with natural projections as vertical maps)
		\begin{eqnarray}\label{Griffiths transversality}\xymatrix{Fil^{a+|K|}H\ar[r]^-{\nabla_{K}}\ar[d]&Fil^{a}H\ar[d]\\
				E^{a+|K|}\ar[r]^-{\theta_{K}}&E^{a},}
		\end{eqnarray}
		the image in $E^a$ of the summand $t_{I-K}\nabla_{K}Fil^{a+|K|}H$ in the left hand side of the inclusion \eqref{inclusion}, is equal to $t_{I-K}\theta_{K}E^{a+|K|}$. Summing up over various $K$s, the inclusion \eqref{inclusion} leads to the inclusion $E^a_{w(\beta)}\subset Gr^a_{Fil}H_{w(\beta)}$. As $a$ is arbitrary, the first inclusion holds.
	\end{proof}
	In the proof of  Lemma \ref{inclusion of intersection subcomplex}, one sees clearly that if the inclusion \eqref{inclusion} becomes an equality, that is if the following condition
	\begin{eqnarray}\label{suff cond}
		Fil^{a}\sum_{K\subset I}t_{I-K}\nabla_{K}H&=&\sum_{K\subset I}t_{I-K}\nabla_{K}Fil^{a+|I|}H
	\end{eqnarray}
	holds for any $a$ and any $I\subset\{1,\cdots,r\}$, then $\Omega^*_{int}(E,\theta)= Gr_{Fil}\Omega_{int}^{*}(H,\nabla)$ holds on $X$. By Lemma \ref{periodicity stable under etale base change} and the faithfully flat descent, one immediately deduces Theorem \ref{inter comm} from the next
	\begin{proposition}\label{proposition suff cond}
		Let $S$ be the spectrum of a perfect field $k$ of characteristic $p$. Then \eqref{suff cond} holds when the following conditions are both satisfied:
		\begin{itemize}
			\item[(i)] $\tilde X_{\log}$ admits an admissible system of local coordinates $t$,
			\item[(ii)] $(H,\nabla,Fil)$ is a one-periodic de Rham bundle over $X_{\log}/k$.
		\end{itemize}
	\end{proposition}
	For each $i$, let us denote the residue of $\nabla$ along the component $D_i\subset D$ by
	$$
	\res_{D_i}\nabla\in \End_{\sO_{D_i}}(H|_{D_i}),
	$$
	and denote for an arbitrary nonempty set $I\subset \{1,\cdots,n\}$, the residue $\res_{D_I}\nabla$ of $\nabla$ along the strata $D_I:=\cap_{i\in I}D_i$ \footnote{For $I=\emptyset$, we set $D_{I}=X$.}
	$$
	\res_{D_I}\nabla:=\prod_{i\in I}(\res_{D_i}\nabla)|_{D_I}\in \End_{\sO_{D_I}}(H|_{D_I}).
	$$
	The following properties of the residues are essential ingredients in the proof of Proposition \ref{proposition suff cond}.
	\begin{lemma}\label{bundle map}
		Notation as in Proposition \ref{proposition suff cond}. Then the following hold:
		\begin{itemize}
			\item[(i)] $\res_{D_I}\nabla$ is strict, i.e. $\res_{D_I}\nabla(Fil^{a+|I|}H|_{D_I})=\mathrm{im}(\res_{D_I}\nabla)\cap Fil^aH|_{D_I}, \forall a$;
			\item[(ii)] for any $a$, $\res_{D_I}\nabla$ restricts to a bundle morphism
			$$\res_{D_I,a}\nabla: Fil^{a+|I|}H|_{D_I}\longrightarrow Fil^aH|_{D_I}.$$
			In other words, the kernel and the cokernel of the morphism $\res_{D_I,a}\nabla$ are locally free.
		\end{itemize}
	\end{lemma}
	\begin{proof}
		We may even assume $\tilde X=\Spec\ R$ affine. By definition, as $(H,\nabla,Fil)$ is one-periodic, there is an isomorphism of modules with integrable connection over $X$:
		$$
		\psi: C^{-1}Gr_{Fil}(H,\nabla)\cong (H,\nabla).
		$$
		Forgetting the connection part, it is indeed an isomorphism of $R$-modules $\psi: F^*Gr_{Fil}H\cong H$. So it is in fact a strict $p$-torsion object of the category $MF(R)$ in Ch. II c) \cite{Fa} \footnote{Strictly speaking, Faltings defines the category $MF(R)$ for $R$ smooth over $W(k)$. However a strict $p$-torsion object in the category is indeed defined over its mod $p$ reduction. Moreover the proof of Theorem 2.1 loc. cit. is reduced to strict $p$-torsion objects over $R/pR$. So it follows that the full subcategory $MF(R)$ of strict $p$-torsion objects and its consequent Theorem 2.1 only require $R$ be smooth over $k$.}. Thus, for each $I$, $\psi$ restricts to an isomorphism $F^*Gr_{Fil}H|_{D_I}\cong H|_{D_I}$, and hence defines $(H|_{D_I},Fil)$ a strict $p$-object in $MF(R_I)$ with $R_I$ the coordinate ring of $D_I$. As $\psi$ is horizontal, $\res_{D_i}$ is a morphism in the category $MF(R_i)$ for any $i$. Consequently, $\res_{D_I}$ is a morphism in $MF(R_I)$. Now (i) follows from \cite[Theorem 2.1 iii)]{Fa}, and (ii) follows from \cite[Corollary to Theorem 2.1, Theorem 2.1]{Fa}.
	\end{proof}
	Now we proceed to the proof of Proposition \ref{proposition suff cond} as follows.
	\begin{proof}
		If $I=\emptyset$, then both sides of the \eqref{suff cond} are equal to $Fil^aH$. Thus we may assume $I$ to be nonempty from the beginning. It suffices to show a section $s$ lying in $Fil^{a}\sum_{K\subset I}t_{I-K}\nabla_{K}H$ over some open affine subset of $X$ admits the following expansion:
		\begin{eqnarray}\label{expansion along D}
			s=\sum_{0\leq i\leq |I|}s_i,~s_i=\sum_{K\subset I, |K|=i}t_{I-K}\nabla_Ks_K,~s_K\in Fil^{a+|K|}H.
		\end{eqnarray}
		The above expansion will be attained, by applying consecutively the following
		\begin{claim}
			If a section $s\in Fil^{a}\sum_{K\subset I}t_{I-K}\nabla_{K}H$ vanishes along $D_K$ for any $K\subset I$ with $|K|=i+1$ for some $0\leq i\leq |I|$, then there exists some section of form $$s_{i}=\sum_{K\subset I, |K|=i}t_{I-K}\nabla_Ks_K,~s_K\in Fil^{a+|K|}H$$ such that the difference $s-s_{i}$ vanishes along $D_K$ for any $K\subset I$ with $|K|=i$.
		\end{claim}
		As the condition of the claim for $i=|I|$ is void, we shall obtain a section $s_{|I|}$ such that $s-s_{|I|}$ vanishes along the strata $D_{I}$. Then we continue downward, until we arrive at a section $s-s_{|I|}-\cdots-s_0$ vanishing along $D_{\emptyset}=X$, from which the expansion for $s$ is obtained.
		
		The defining ideal of $D$ is principal with generator $t_I$. Therefore, if a section $s\in H$ vanishes along all $D_i$s, it must be of form $s_0=t_Is'$ for some $s'\in H$. So the claim for $i=0$ holds. Now
		given an $s\in H$ as in the claim with $|I|>0$ and any $K_0\subset I$ with $|K_0|=i$, we aim to construct an $s_{K_0}\in Fil^{a+|K_0|}H$ such that $s|_{D_{K_0}}=t_{I-K_0}\nabla_{K_0}s_{K_0}$. Since $t_{I-K_0}$ vanishes along any $D_{K'}$ with $K'\subset I, K'\neq K_0, |K'|=i$, it follows that $s_{i}=\sum_{K\subset I, |K|=i}t_{I-K}\nabla_Ks_K$ is a solution as required. By condition, $s|_{D_{K_0}}$ vanishes along any $D_{K_0\cup \{j\}}$ with $j\in I-K_0$. So
		$$
		s|_{D_{K_0}}=(t_{I-K_0}s'_{K_0})|_{D_{K_0}},\quad s'_{K_0}\in Fil^aH.
		$$
		Notice that the image of the natural morphism $(\sum_{K\subset I}t_{I-K}\nabla_{K}H)|_{D_{K_0}}\rightarrow H|_{D_{K_0}}$ is contained in the image of
		$$
		\res_{D_{K_0}}\nabla: H|_{D_{K_0}}\to H|_{D_{K_0}}.
		$$
		It follows that
		$
		s\in Fil^aH\cap\mathrm{im}(\res_{D_{K_0}}\nabla).
		$
		Consider the following exact sequence of $\sO_{D_{K_0}}$-modules:
		$$
		Fil^{a+|K_0|}H|_{D_{K_0}}\stackrel{\res_{D_{K_0},a}\nabla}{\longrightarrow} Fil^{a}H|_{D_{K_0}}\stackrel{\pi_{K_0,a}}{\longrightarrow}\mathrm{coker}(\res_{D_{K_0},a}\nabla)\to 0.
		$$
		By Lemma \ref{bundle map} (i), it follows that
		$$
		\pi_{K_0,a}(s'_{K_0}|_{D_{K_0}})=0~\mathrm{on}~U\cap(D_{K_0}-\cup_{j\in I-K_0} D_j).
		$$
		On the other hand, since $D_{K_0}$ is integral and $\mathrm{coker}(\res_{D_{K_0},a}\nabla)$ is locally free by Lemma \ref{bundle map}, it follows that $\pi_{K_0,a}(s'_{K_0}|_{D_{K_0}})=0$. Therefore, there exists some section $s_{K_0}$ of $Fil^{a+|K_0|}H$ such that
		$$
		s'_{K_0}|_{D_{K_0}}=\res_{D_{K_0},a}\nabla(s_{K_0}|_{D_{K_0}}).
		$$
		Consequently, $s=t_{I-K_0}\nabla_{K_0}s_{K_0}$ along $D_{K_0}$ with $s_{K_0}\in Fil^{a+|K_0|}H$. The claim is proved. The proposition follows.	
	\end{proof}

\subsection{$E_1$-degeneration and vanishing theorem}\label{E_1 degeneration subsection}
	The data of a one-periodic de Rham bundle $(H,\nabla,Fil)$ is encoded in the following triangle:
	$$
	\xymatrix{  (H,\nabla) \ar[dr]^{Gr_{Fil}}
		&&  C^{-1}(E,\theta),\ar@/_2pc/[ll]_{\stackrel{\psi}{\cong}  }\\
		&Gr_{Fil}(H,\nabla)=(E,\theta).\ar[ur]^{C^{-1}}}
	$$
	The following $E_1$-degeneration theorem has a one-sentence proof.
	\begin{theorem}\label{E_1 deg}
		Assume additionally that $X$ is proper over $k$. Let $(H,\nabla,Fil)$ be a one-periodic de Rham bundle over $X_{\log}$ with the level of the filtration $l\leq p-1$. If $\dim X+l< p$, then the Hodge to de Rham spectral sequence associated to the intersection de Rham complex $\Omega^*_{int}(H,\nabla)$ degenerates at $E_1$.
	\end{theorem}
	\begin{proof}
		In the above triangle, applying the intersection adaptedness theorem (Theorem \ref{inter comm}) to the left down side and then the de Rham-Higgs comparison theorem (Theorem \ref{dec thm}) to the right down side, one obtains the following (in)equalities on the dimensions of finite dimensional $k$-vector spaces:
		\begin{eqnarray*}
			\dim Gr_{Fil}\HH^i(X,\Omega^*_{int}(H,\nabla))&\leq &\dim \HH^i(X,Gr_{Fil}\Omega^*_{int}(H,\nabla))\\
			&=&\dim \HH^i(X,\Omega^*_{int}Gr_{Fil}(H,\nabla))\\
			&=& \dim \HH^i(X,F_{X*}\Omega^*_{int}C^{-1}(E,\theta))\\
			&=& \dim \HH^i(X,F_{X*}\Omega^*_{int}(H,\nabla))\\
			&=&\dim \HH^i(X,\Omega^*_{int}(H,\nabla))\\
			&=& \dim Gr_{Fil}\HH^i(X,\Omega^*_{int}(H,\nabla)),
		\end{eqnarray*}
		which implies the first inequality is actually an equality and hence the theorem follows.
	\end{proof}
	It is useful to extend the intersection adaptedness theorem and hence the $E_1$-degeneration theorem to a general periodic de Rham bundle.
	\begin{theorem}\label{loc dec thm}
		Notation and assumption as Theorem \ref{inter comm}. Let $(H,\nabla,Fil)$ be a periodic de Rham bundle over $X_{\log}$. Then the following holds
		$$
		Gr_{Fil}\Omega^*_{int}(H,\nabla)=\Omega^*_{int}Gr_{Fil}(H,\nabla).
		$$
	\end{theorem}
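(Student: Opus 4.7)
The plan is to reduce the general $f$-periodic case to the one-periodic case already settled by Theorem \ref{inter comm}, via a direct-sum construction that packages the entire Higgs--de Rham flow into a single one-periodic object. Let $(H_i,\nabla_i,Fil_i)_{0\leq i\leq f-1}$ denote the iterates of the given periodic de Rham bundle, with $(H,\nabla,Fil)=(H_0,\nabla_0,Fil_0)$, and let $\psi\colon (H_f,\nabla_f)\cong(H_0,\nabla_0)$ be the periodicity isomorphism.

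First I would form the direct sum
$$
(\widetilde H,\widetilde\nabla,\widetilde{Fil}):=\bigoplus_{i=0}^{f-1}(H_i,\nabla_i,Fil_i),\qquad \widetilde{Fil}^{j}:=\bigoplus_{i=0}^{f-1}Fil_i^{j},
$$
which is a de Rham bundle with Griffiths transverse filtration of level $\leq p-1$. Its associated graded Higgs bundle is $(\widetilde E,\widetilde\theta)=\bigoplus_{i}(E_i,\theta_i)$, and the additivity of $C^{-1}$ yields
$$
C^{-1}(\widetilde E,\widetilde\theta)=\bigoplus_{i=0}^{f-1}(H_{i+1},\nabla_{i+1})=(H_1,\nabla_1)\oplus\cdots\oplus(H_{f-1},\nabla_{f-1})\oplus(H_f,\nabla_f).
$$
Applying $\psi$ to the last summand and cyclically reordering the summands then produces an isomorphism $C^{-1}(\widetilde E,\widetilde\theta)\cong\widetilde H$ of flat bundles, so $(\widetilde H,\widetilde\nabla,\widetilde{Fil})$ is one-periodic.

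Next I would apply Theorem \ref{inter comm} to $(\widetilde H,\widetilde\nabla,\widetilde{Fil})$, which gives the equality
$$
Gr_{\widetilde{Fil}}\,\Omega^*_{int}(\widetilde H,\widetilde\nabla)=\Omega^*_{int}\,Gr_{\widetilde{Fil}}(\widetilde H,\widetilde\nabla).
$$
Since the intersection subcomplex is built locally from generators of the form $\nabla_I(e)\otimes d\log t_I$, its formation is manifestly compatible with direct sums of flat sheaves and of Higgs sheaves; likewise $Gr_{\widetilde{Fil}}$ splits as $\bigoplus_i Gr_{Fil_i}$. Thus both sides decompose as $\bigoplus_{i=0}^{f-1}$ of the corresponding subcomplex attached to $(H_i,\nabla_i,Fil_i)$, and extracting the $i=0$ summand produces the desired identity
$$
Gr_{Fil}\,\Omega^*_{int}(H,\nabla)=\Omega^*_{int}(E,\theta)=\Omega^*_{int}\,Gr_{Fil}(H,\nabla).
$$

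The hard part has already been carried out in the proof of Theorem \ref{inter comm}; no additional local analysis (such as extending Lemmas \ref{local} or \ref{bundle map} beyond the one-periodic situation) is needed here. The only step requiring any care is verifying that the cyclic-permutation-together-with-$\psi$ map is a flat-bundle isomorphism fulfilling the one-periodicity definition, which is immediate from the additivity of $C^{-1}$ and the fact that $\psi$ intertwines the relevant connections.
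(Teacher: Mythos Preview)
Your proof is correct and follows essentially the same route as the paper: form the direct sum $\bigoplus_{i=0}^{f-1}(H_i,\nabla_i,Fil_i)$, observe it is one-periodic, apply Theorem~\ref{inter comm}, and use that both $\Omega^*_{int}$ and $Gr_{Fil}$ commute with direct sums to read off the $i=0$ component. The paper makes the extraction step slightly more explicit by invoking the termwise inclusion $\Omega^*_{int}Gr_{Fil_i}(H_i,\nabla_i)\subset Gr_{Fil_i}\Omega^*_{int}(H_i,\nabla_i)$ from Lemma~\ref{inclusion of intersection subcomplex}, but your observation that both sides sit as direct sums inside $\bigoplus_i\Omega^*(E_i,\theta_i)$ achieves the same thing.
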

	\begin{proof}
		Assume the period of $(H,\nabla,Fil)$ to be $f$ and set $\{(H_i,\nabla_i,Fil_i)\}$ to be a defining sequence of the $f$-periodicity of $(H,\nabla,Fil)$. One observes that the direct sum of de Rham bundles
		$$
		\bigoplus_{0\leq i\leq f-1}(H_i,\nabla_i,Fil_i)
		$$
		is a one-periodic de Rham bundle. It is clear that the grading functor $Gr_{Fil}$ and the intersection functor $\Omega^*_{int}$ commute with direct sum. Hence, by Theorem \ref{inter comm} we get that
		$$
		\bigoplus_{i=0}^{f-1}\Omega^*_{int}Gr_{Fil_i}(H_i,\nabla_i)=\bigoplus_{i=0}^{f-1}Gr_{Fil_i}\Omega^*_{int}(H_i,\nabla_i).
		$$
		By Lemma \ref{inclusion of intersection subcomplex}, we have the natural inclusion for each $i$:
		$$
		\Omega^*_{int}Gr_{Fil_i}(H_i,\nabla_i)\subset Gr_{Fil_i}\Omega^*_{int}(H_i,\nabla_i).
		$$
		Therefore, we have the equality for each $i$,
		$$
		\Omega^*_{int}Gr_{Fil_i}(H_i,\nabla_i)=Gr_{Fil_i}\Omega^*_{int}(H_i,\nabla_i).
		$$
		The $i=0$ case of the above equality is the claimed result.
	\end{proof}
	\begin{corollary}\label{E_1 degeneration}
		Assume additionally that $X$ is proper over $k$. Let $(H,\nabla, Fil)$ be a periodic de Rham bundle over $X_{\log}$. If $\dim(X)+\rank(H)\leq p$, then the Hodge to de Rham spectral sequence of the intersection de Rham complex $\Omega_{int}^*(H,\nabla)$ degenerates at $E_1$.
	\end{corollary}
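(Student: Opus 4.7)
The plan is to reduce the case of a general periodic de Rham bundle to the one-periodic case by the direct sum trick already exploited in the proof of Theorem \ref{loc dec thm}, and then invoke Theorem \ref{E_1 deg}. Thus the result becomes a routine packaging of what has already been established; the only genuine check is that the hypothesis $\dim X+\rank(H)\leq p$ on the original bundle produces the correct level hypothesis on the direct sum.

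Concretely, let $(H,\nabla,Fil)$ be periodic of period $f$, with associated sequence $\{(H_i,\nabla_i,Fil_i)\}_{0\leq i\leq f-1}$ satisfying $(H_{i+1},\nabla_{i+1})=C^{-1}\circ Gr_{Fil_i}(H_i,\nabla_i)$ and an identification $(H_f,\nabla_f)\cong (H_0,\nabla_0)$. Form the de Rham bundle
$$
(\tilde H,\tilde\nabla,\widetilde{Fil})=\bigoplus_{i=0}^{f-1}(H_i,\nabla_i,Fil_i),
$$
where $\widetilde{Fil}$ is the direct sum filtration. Since $Gr$ and $C^{-1}$ commute with direct sums and the period map cyclically permutes the summands, $(\tilde H,\tilde\nabla,\widetilde{Fil})$ is \emph{one-periodic}. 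The inverse Cartier transform preserves rank, so $\rank(H_i)=\rank(H)$ for every $i$, and therefore the level $l$ of $\widetilde{Fil}$ satisfies
$$
l=\max_i l(Fil_i)\leq \rank(H)-1.
$$
Combining with the hypothesis, $\dim X+l\leq \dim X+\rank(H)-1\leq p-1<p$, which is exactly the hypothesis of Theorem \ref{E_1 deg}.

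Applying Theorem \ref{E_1 deg} to $(\tilde H,\tilde\nabla,\widetilde{Fil})$, we conclude that the Hodge to de Rham spectral sequence of $\Omega^*_{int}(\tilde H,\tilde\nabla)$ degenerates at $E_1$. Finally, since the formation of $\Omega^*_{int}$ and of the Hodge filtration on it both commute with direct sums, the filtered complex $\Omega^*_{int}(\tilde H,\tilde\nabla)$ decomposes as the direct sum $\bigoplus_{i}\Omega^*_{int}(H_i,\nabla_i)$ (with its Hodge filtration). Consequently the associated spectral sequence is the direct sum of the spectral sequences of the summands, and $E_1$-degeneration is inherited by each factor; in particular by $\Omega^*_{int}(H_0,\nabla_0)=\Omega^*_{int}(H,\nabla)$, which is the claim.

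There is essentially no hard step: the intersection adaptedness for periodic (not merely one-periodic) bundles has already been upgraded in Theorem \ref{loc dec thm}, and the decomposition theorem is Theorem \ref{dec thm}. If any point warrants care, it is the verification that $l(\widetilde{Fil})\leq \rank(H)-1$ (rather than $f\cdot\rank(H)-1$), which is what makes the direct sum reduction work under the stated numerical hypothesis; this only requires observing that the level is the length of the filtration, not the total dimension.
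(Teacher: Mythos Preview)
Your argument is correct and gives a genuinely different route from the paper's. The paper does not pass to the direct sum here; instead it runs the inequality chain from the proof of Theorem~\ref{E_1 deg} step by step along the flow
\[
\dim \HH^i(X,\Omega^*_{int}(H_0,\nabla_0))\leq \dim \HH^i(X,\Omega^*_{int}(H_1,\nabla_1))\leq \cdots\leq \dim\HH^i(X,\Omega^*_{int}(H_{f},\nabla_{f})),
\]
invoking Theorem~\ref{loc dec thm} and Theorem~\ref{dec thm} at each stage, and then uses the periodicity $(H_f,\nabla_f)\cong(H_0,\nabla_0)$ to force all inequalities to be equalities. Your direct-sum reduction recycles the trick from the proof of Theorem~\ref{loc dec thm} and is arguably more economical, calling Theorem~\ref{E_1 deg} once rather than unwinding its proof $f$ times; the paper's approach in exchange yields the extra information that all the intermediate hypercohomology dimensions coincide.

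One small correction of language: the $l$ appearing in the hypothesis of Theorem~\ref{E_1 deg} is the nilpotency level of $(H,\nabla)$ (equivalently of the Higgs field), since that is what controls the truncation $\tau_{<p-l}$ in Theorem~\ref{dec thm}; it is not the length of the Hodge filtration. Your inequality $l\leq\rank(H)-1$ is valid for the nilpotency level (a nilpotent endomorphism of a rank~$r$ bundle has $r$-th power zero), and that is what you need. The filtration length of some $Fil_i$ could in principle exceed $\rank(H)-1$ if intermediate graded pieces vanish, so your parenthetical justification at the end is slightly off target; but this does not affect the validity of the proof.
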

	\begin{proof}
		We shall use the notation in the proof of Theorem \ref{loc dec thm}. The condition on rank guarantees the level of the filtration $Fil_i$ in the sequence is of $\leq p-1$. Then by Theorem \ref{loc dec thm}, the proof of Theorem \ref{E_1 deg} yields the following sequence of (in)equalities:
		$$\begin{array}{rcl}
			\dim \HH^i(X,\Omega^*_{int}(H_0,\nabla_0))&\leq&\dim\HH^i(X,Gr_{Fil_0}\Omega^*_{int}(H_0,\nabla_0))\\
			&=&\dim \HH^i(X,\Omega^*_{int}(H_1,\nabla_1))\\
			&\vdots&\\
			&\leq& \dim\HH^i(X,\Omega^*_{int}(H_{f},\nabla_{f})).\end{array}
		$$
		Since the last term is equal to the first term by the periodicity, the above inequalities are all equalities.  The first equality in above is the claim.
	\end{proof}
	\begin{remark}\label{E_1 for log}
		In Theorem \ref{E_1 deg} and Corollary \ref{E_1 degeneration}, the Hodge to de Rham spectral sequence associated to the de Rham complex $\Omega^*(H,\nabla)$ also degenerates at $E_1$. The above proofs works verbatim after replacing $\Omega^{*}_{int}$ with $\Omega^*$. This generalizes the $E_1$ degeneration theorem of Illusie attached to semistable families \cite{IL90} (see also \cite{Fa}, \cite{Ogus}).
	\end{remark}
	One of the most significant applications of $E_1$-degeneration is the Kodaira-type vanishing theorem (see \cite{EV}). The following vanishing result generalizes many known in literature.
	\begin{corollary}\label{vanishing result}
		Assume additionally that $X$ is projective over $k$. Let $(E,\theta)$ be a periodic Higgs bundle on $X_{\log}$. If the inequality $\dim(X)+\rank(E)\leq p$ holds, one has the following vanishing
		$$
		\HH^i(X,\Omega_{int}^*(E,\theta)\otimes L)=0, \quad i>\dim X
		$$
		for any ample line bundle $L$ on $X$.
	\end{corollary}
	\begin{proof}
		Assume the period of $(E,\theta)$ to be $f$.  Let $(E_i,\theta_i)$ be the Higgs terms of an $f$-periodic Higgs-de Rham flow with the initial term $(E_0,\theta_0)=(E,\theta)$. Theorem \ref{dec thm} and Theorem \ref{loc dec thm} together yield a sequence of (in)equalities
		$$\begin{array}{rcl}
			\dim \HH^i(\Omega^*_{int}(E_0,\theta_0)\otimes L)&= & \dim \HH^i(\Omega^*_{int}C^{-1}(E_0,\theta_0)\otimes L^p)\\
			&\leq & \dim \HH^i(Gr_{Fil}\Omega^*_{int}C^{-1}(E_0,\theta_0)\otimes L^p)\\
			&= & \dim \HH^i(\Omega^*_{int}(E_1,\theta_1)\otimes L^p)\\
			&\vdots&\\
			&\leq &\dim \HH^i(\Omega^*_{int}(E_f,\theta_f)\otimes L^{p^f}).
		\end{array}$$
		As $(E_f,\theta_f)\cong (E_0,\theta_0)$ by assumption, we obtain the following inequality
		$$
		\dim \HH^i(\Omega^*_{int}(E,\theta)\otimes L)\leq \dim \HH^i(\Omega^*_{int}(E,\theta)\otimes L^{p^f}).
		$$
		It follows that for any $m\in \N$,
		$$
		\dim \HH^i(\Omega^*_{int}(E,\theta)\otimes L)\leq \dim \HH^i(\Omega^*_{int}(E,\theta)\otimes L^{p^{mf}}).
		$$
		On the other hand, the Serre vanishing theorem implies that for $i>\dim X$,
		$$
		\dim \HH^i(\Omega^*_{int}(E,\theta)\otimes L^{p^{mf}})=0,~ m\gg1.
		$$
		Therefore $\dim \HH^i(\Omega^*_{int}(E,\theta)\otimes L)=0$ for $i>\dim X$ as claimed.
	\end{proof}

	Based on Theorem \ref{dec thm} and Theorem \ref{E_1 deg},  we are able to obtain a basic structure theorem of intersection cohomology groups attached to strict $p$-torsion logarithmic Fontaine modules. More generally, one may extend the result below for strict $p$-torsion logarithmic Fontaine modules with endomorphism structure $\F_{p^f}$. See \cite[\S2 Variant 2]{LSZ1} for the basic definition. We leave the necessary modification for this general case to interested readers.
	
	Recall that the category of strict $p$-torsion Fontaine-Laffaille modules consists of the following objects $(M, Fil, \phi)$, where $M$ is a finite dimensional $k$-vector space, $Fil$ is a finite decreasing filtration on $M$ satisyfing $Fil^0=M$, and $\phi$ is a $k$-linear isomorphism $F_k^*Gr_{Fil}M\to M$ (the category is denoted as $\widetilde{MF}^f_{tor}$ in \cite{FL}). We shall give a slight improvement of the following result of G. Faltings \cite[Theorem 4.1*]{Fa}\footnote{In loc. cit. the condition on the Hodge-Tate weight $w$ reads $w+\dim X\leq p-2$.}.
	\begin{theorem}[Faltings]\label{FL for log}
		Let $(H,\nabla,Fil,\Phi)$ be a strict $p$-torsion Fontaine module over $\tilde X_{\log}$ of Hodge-Tate weight $w$. If $w+\dim X\leq p-1$, then for each $i$ the cohomology group
		$$H_{DR}^i(X,(H,\nabla)):=\HH^i(X,\Omega^*(H,\nabla))$$
		is a strict $p$-torsion Fontaine-Laffaille module.
	\end{theorem}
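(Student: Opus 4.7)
Set $M := \HH^i(X,\Omega^*(H,\nabla))$. Since $X$ is proper over $k$, $M$ is a finite-dimensional $k$-vector space. The plan is to endow $M$ with a filtration $Fil$ and a Frobenius $\phi \colon \sigma^{*}Gr_{Fil}M \xrightarrow{\sim} M$ making $(M,Fil,\phi)$ into a strict $p$-torsion Fontaine-Laffaille module, by combining the decomposition theorem (Theorem \ref{equivalence}(ii)) with the $E_1$-degeneration from Remark \ref{E_1 for log}, applied to $(H,\nabla,Fil)$ viewed as a one-periodic de Rham bundle via Proposition \ref{one-periodic}.

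Concretely, write $(E,\theta) := Gr_{Fil}(H,\nabla)$, and give $\Omega^*(H,\nabla)$ its Hodge filtration $Fil^{r}\Omega^{s}(H,\nabla) = Fil^{r-s}H \otimes \omega^{s}_{X/k}$. This induces a filtration $Fil$ on $M$ with $Fil^{0}M = M$ because $Fil$ on $H$ is effective. The one-periodic structure $(H,\nabla,Fil)$ has level $l\leq w$; since $\dim X+l \leq \dim X+w \leq p-1$, Remark \ref{E_1 for log} supplies an $E_1$-degeneration isomorphism $Gr_{Fil}M \cong \HH^{i}(X,\Omega^{*}(E,\theta))$. For the Frobenius, Proposition \ref{one-periodic} gives an identification $\tilde\Phi \colon C^{-1}_{OV}\pi^{*}(E,\theta)\xrightarrow{\sim}(H,\nabla)$ of flat bundles. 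Applying $F_{*}\Omega^{*}$, invoking Theorem \ref{equivalence}(ii), and using the base-change identity $\HH^{i}(X',\pi^{*}K^{*}) = \sigma^{*}\HH^{i}(X,K^{*})$, we assemble the chain
$$M \;\cong\; \HH^{i}(X',F_{*}\Omega^{*}C^{-1}_{OV}\pi^{*}(E,\theta)) \;\cong\; \HH^{i}(X',\Omega^{*}\pi^{*}(E,\theta)) \;\cong\; \sigma^{*}\HH^{i}(X,\Omega^{*}(E,\theta)) \;\cong\; \sigma^{*}Gr_{Fil}M,$$
and take $\phi$ to be the inverse of this composition.

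The principal technical obstacle lies in the middle isomorphism above: Theorem \ref{equivalence}(ii) only yields a quasi-isomorphism after applying $\tau_{<p-l}$, and we must check the truncation is innocuous. The assumption $\dim X+w \leq p-1$, together with $l\leq w$, forces $p-l > \dim X$; since both complexes are concentrated in degrees $[0,\dim X]$, the truncation is the identity there, and the full quasi-isomorphism descends to $\HH^{i}$. This is precisely the point where the sharper bound $\dim X+w \leq p-1$ replaces Faltings' original $\dim X+w \leq p-2$. The remaining verifications---finite-dimensionality of $M$, $Fil^{0}M=M$, and that the composite chain is $k$-linear and bijective---are then formal from the construction, as is functoriality in $i$.
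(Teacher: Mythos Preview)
Your proof is correct and follows essentially the same approach as the paper: both combine the $E_1$-degeneration from Remark \ref{E_1 for log}, flat base change along $\sigma$, the one-periodicity isomorphism $\tilde\Phi$ of Proposition \ref{one-periodic}, and the Ogus--Vologodsky decomposition theorem to assemble the Frobenius structure $\sigma^{*}Gr_{Fil}M\cong M$. You are in fact slightly more explicit than the paper about why the truncation $\tau_{<p-l}$ is harmless under the hypothesis $\dim X+w\leq p-1$.
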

	\begin{proof}
		By Proposition \ref{one-periodic} and $E_1$-degeneration (Remark \ref{E_1 for log}), one has the equalities of $k$-vector spaces:
		$$
		Gr_{Fil}\HH^i(X,\Omega^*(H,\nabla))=\HH^i(X, Gr_{Fil}\Omega^*(H,\nabla))=\HH^i(X,\Omega^*Gr_{Fil}(H,\nabla)).
		$$
		By flat base change, Theorem \ref{equivalence} (ii) and Proposition \ref{one-periodic} again, one has
		$$\begin{array}{rcl}
			F_k^*\HH^i(X,\Omega^*Gr_{Fil}(H,\nabla))&\cong &\HH^i(X', \Omega^*Gr_{Fil}(H,\nabla))\\
			&\cong&\HH^i(X', F_*\Omega^*(C^{-1}Gr_{Fil}(H,\nabla)))\\
			&\cong&\HH^i(X',F_*\Omega^*(H,\nabla))\\
			&\cong&\HH^i(X,\Omega^*(H,\nabla))
		\end{array}$$
		The above computations show that $H^i_{DR}(X_{\log},(H,\nabla))$ is naturally a strict $p$-torsion Fontaine-Laffaille module.
	\end{proof}
	
	The natural inclusion $\Omega_{int}^*(H,\nabla)\subset \Omega^*(H,\nabla)$ induces natural morphisms for their cohomology groups. We show further that
	\begin{corollary}\label{FL module}
		Let $(H,\nabla,Fil,\Phi)$ be as in Theorem \ref{FL for log}. Then for each $i$, the intersection cohomology group associated to $(H,\nabla)$ (Definition \ref{intersection cohomology group})
		$$IH^i(X,(H,\nabla)):=\HH^i(X,\Omega^*_{int}(H,\nabla))$$
		admits the structure of a strict $p$-torsion Fontaine-Laffaille module such that the natural morphism $$IH^i(X,(H,\nabla))\to H_{DR}^i(X,(H,\nabla))$$ is a morphism of strict $p$-torsion Fontaine-Laffaille modules.
	\end{corollary}
	\begin{proof}
		In the proof of Theorem \ref{FL for log}, by replacing the equality $$Gr_{Fil}\Omega^*(H,\nabla)=\Omega^*Gr_{Fil}(H,\nabla)$$ with the intersection adaptedness theorem (Theorem \ref{inter comm}),  Theorem \ref{equivalence} (ii) and the $E_1$ degeneration theorem with their corresponding intersection versions (Theorem \ref{dec thm}, Theorem \ref{E_1 deg}), one obtains the same conclusion for the intersection cohomology groups.  
		The natural morphism
		$$IH^i(X_{\log},(H,\nabla))\to H_{DR}^i(X_{\log},(H,\nabla))$$
		induced by $\Omega_{int}^*(H,\nabla)\subset \Omega^*(H,\nabla)$ is obviously a morphism of strict $p$-torsion Fontaine-Laffaille modules, since the filtration on $\Omega_{int}^*(H,\nabla)$ is induced from the one on $\Omega^*(H,\nabla)$.
	\end{proof}

	\section{Applications}
	In this section, $X$ is an $n$-dimensional smooth projective variety over $\C$ and $D\subset X$ a reduced normal crossing divisor. Set $X^0:=X-D$. The following fundamental result in Hodge theory was first established by Zucker \cite{Zuc} in the curve case and by Cattani-Kaplan-Schmid \cite{CKS} and independently by Kashiwara-Kawai \cite{KK} in general.
	\begin{theorem}\label{pure HS}
		Let $\V$ be a variation of Hodge structure of weight $w$ over $X^{0,an}$. Then the filtered intersection complex $(IC(X^{an},\V_{\C}),Fil)$ is a Hodge complex inducing a canonical pure Hodge structure of weight $w+m$ on the intersection cohomology $IH^m(X^{an},\V_\C)$.
	\end{theorem}
	Among other things, the theory of $L^2$-integrable harmonic forms plays a pivotal role in the proof of Theorem \ref{pure HS}. An algebraic consequence of this transcendental result is the $E_1$-degeneration of the spectral sequence:
	\begin{eqnarray}\label{Appli den}\quad\quad E_{1}^{p,q}=\mathbb{H}^{p+q}(X^{an},Gr^{p}_{Fil}IC(X^{an},\V_\C))\Rightarrow \mathbb{H}^{p+q}(X^{an},IC(X^{an},\V_\C)).
	\end{eqnarray}
	The major aim of this section is to provide a characteristic $p$ proof of this fact for those $\V$s coming from geometry.
	\begin{definition}
		A semistable family $f:Y\to X$ over $X$ is a projective morphism, smooth over $X^0$, with $Y$ smooth projective and $E:=f^{*}D\subset Y$ a reduced normal crossing divisor. Set $Y^0:=Y-E$ and $f^0:=f|_{Y^0}$. We  call $\V=R^wf^{0,an}_*\Q_{Y^{0,an}}$ for some $w\geq 0$ a VHS of geometric origin.
	\end{definition}
	We are going to prove the following
	\begin{theorem}\label{app main theo} Let $\V$ be a VHS of geometric origin over $X^{0,an}$. Then the spectral sequence \eqref{Appli den} degenerates at $E_1$.
	\end{theorem}
	By the local monodromy theorem of Landman \cite{Landman}, we know the local monodromies of $\V$ of geometric origin are unipotent. Let us recall briefly the definition of Deligne extension in this setting as well as the consequent definition of intersection complex attached to $\V_{\C}$. We shall use \cite[\S8]{CEGT} as the major reference.
	
	It is enough to look at the local situation. So we set
	$$
	U^{an}=\Delta^{n},D^{an}=(t_1\cdots t_r=0),U^{0,an}=(\Delta^{*})^{r}\times\Delta^{n-r},1\leq r\leq n,$$
	where $\Delta\subset\mathbb{C}$ is the unit disk, $\Delta^{*}=\Delta-\{0\}$ and $t_1,\cdots,t_n$ are the standard coordinates on $U^{an}$. Let $j:U^{0,an}\hookrightarrow U^{an}$ be the natural inclusion, $O\in U^{0,an}$ a reference point. Let $\V$ be any VHS over $U^{0,an}$. Set $V:=\V_{O}$, and $(\mathcal{V}^{an},\nabla^{an},Fil^{an})$ the associated analytic de Rham bundle to $\V$. Let $T_{1},\cdots,T_{r}\in \mathrm{End}_{\mathbb{Q}}(V)$ be the local monodromies along $r$-components of $D$.
	\begin{definition}[\S8.3.3.1 \cite{CEGT}] \label{IC def C}Notation as above. Let $\overline{\mathcal{V}^{an}}\subset j_*\mathcal{V}^{an}$ be the $\mathcal{O}_{U^{an}}$-submodule generated by the following sections
		$$
		\tilde{v}=\mathrm{exp}(-\frac{1}{2\pi \sqrt[]{-1}}\sum_{1\leq i\leq r}N_i\log t_i)v\in\Gamma(X^{an},j_*\mathcal{V}^{an}),~v\in V
		$$
		where $N_i=\log T_i,1\leq i\leq r$. Let $(\overline{\nabla^{an}},\overline{Fil^{an}})$ be the natural extension of $(\nabla^{an},Fil^{an})$ to $\overline{\mathcal{V}^{an}}$, defined as follows
		$$
		\overline{\nabla}\tilde{v}=\sum_{1\leq i\leq r}-\frac{1}{2\pi \sqrt[]{-1}}\widetilde{N_iv}\otimes d\log t_i,~s\in\overline{Fil}^{p}\overline{\mathcal{V}}\Leftrightarrow s\in Fil^{p}\mathcal{V}.
		$$
		The triple $(\overline{\mathcal{V}^{an}},\overline{\nabla^{an}},\overline{Fil^{an}})$ is called the Deligne extension of $(\mathcal{V}^{an},\nabla^{an},Fil^{an})$ over $U^{an}$.
	\end{definition}
	Let $D_I=\cap_{i\in I}D_i, I\in\{1,\cdots,r\}$ ($D_{\emptyset}=X$), and let $D^{*}_{I}=D_{I}-\cap_{I\subsetneqq J}D_{J}$. It is easy to see that for any point $x\in U^{an}$, there exists a unique $I\subset\{1,\cdots,r\}$ such that $x\in D^{*}_I$.
	\begin{definition}[Definition 8.2.23 \cite{CEGT}]
		Notation as above. The intersection complex $IC(U^{an},\V_\C)$ is the subcomplex of  $\Omega^{*}(\overline{\mathcal{V}^{an}},\overline{\nabla^{an}})$ whose stalk at a point $x\in D^{*}_I$ is defined as an $\Omega^{\bullet}_{U^{an},x}$-submodule, generated by sections
		$$
		\tilde{v}\wedge_{j\in J}d\log t_j,~v\in N_{J}V,~N_J=\prod_{j\in J}N_j~(N_{\emptyset}=Id),~J\subset I.
		$$
		The filtration $Fil$ on $IC(U^{an},\V_\C)$ is induced from the filtration on $\Omega^{*}(\overline{\mathcal{V}},\overline{\nabla})$.
	\end{definition}
	For a VHS $\V$ over $X^{0,an}$ with unipotent local monodromies around $D$, the intersection complex $IC(X^{an},\V_{\C})$ is obtained by patching various local intersection complexes $IC(U^{an},\V_\C)$ together. On the other hand, let $(\overline{\mathcal{V}^{an}},\overline{\nabla^{an}},\overline{Fil^{an}})$ be the Deligne extension of the analytic de Rham bundle $(\sV^{an},\nabla^{an},Fil^{an})$. By GAGA, the Deligne extension over $X$ is indeed algebraic. That is, there is an algebraic de Rham bundle $(\overline\sV,\overline \nabla,\overline{Fil})$ whose analytification is $(\overline{\mathcal{V}^{an}},\overline{\nabla^{an}},\overline{Fil^{an}})$. So we get the intersection de Rham complex $\Omega^*_{int}(\overline{\mathcal{V}},\overline{\nabla})$ which is naturally filtered.
	\begin{lemma}\label{two def equ}
		Notation as above. Then the analytification of $\Omega^*_{int}(\overline{\mathcal{V}},\overline{\nabla})$ coincides with the intersection complex $IC(X^{an},\V_\C)$ as filtered complex.
	\end{lemma}
	\begin{proof}
		Write $\Omega^*_{int}(\overline{\mathcal{V}},\overline{\nabla})^{an}$ for the analytification. Both filtered complexes in question are filtered subcomplexes of $\Omega^*(\overline{\mathcal{V}},\overline{\nabla})^{an}$. So it suffices to show
		$$
		IC(X^{an},\V_\C)=\Omega^{*}_{int}(\overline{\mathcal{V}},\overline{\nabla})^{an}.
		$$
		The problem is analytically local. So we may assume that we are in the situation of Definition \ref{IC def C}. Write
		$$
		\overline{\nabla}=\sum_{i=1}^r\overline{\nabla}_i\otimes d\log t_i+\sum_{i=r+1}^n\overline{\nabla}_i\otimes dt_i.
		$$
		Notice that for any $v\in V$,
		$$
		\widetilde{N_iv}=-2\pi\sqrt[]{-1}\overline{\nabla}_i\tilde{v},\quad i\in\{1,\cdots,r\}.
		$$
		It follows that for any $J\subset\{1,\cdots,r\}$, we have
		$$
		\widetilde{N_Jv}=(-2\pi\sqrt[]{-1})^{|J|}\overline{\nabla}_{J}\tilde{v},v\in V.
		$$
		So the stalk of $IC(X^{an},\V_\C)$ at $x\in D^*_{I},I\subset\{1,\cdots,r\}$ is generated by sections
		$$\overline{\nabla}_{J}\tilde{v}\wedge_{j\in J}d\log t_j,~v\in V,~J\subset I.$$
		As over some analytically open neighborhood of $x$, any $t_i,i\in\{1,\cdots,r\}-I$ becomes a unit, we may replace the above set of generators by the following set of sections
		$$
		\overline{\nabla}_{J}\tilde{v}\wedge_{j\in J}d\log t_j,~v\in V,~J\subset\{1,\cdots,r\}.
		$$
		A comparison of the above set of generators with the one given in Lemma \ref{independence of special local coord} yields the equality $IC(X^{an},\V_\C)=\Omega^{*}_{int}(\overline{\mathcal{V}},\overline{\nabla})^{an}$ as claimed.
	\end{proof}
	The following lemma is well-known.
	\begin{lemma}\label{Deligne extension vs log Gauss-Manin}
		Let $f:(Y,E)\to (X,D)$ be a semistable family. Let
		$$
		(H,\nabla,Fil):=(R^wf_{*}\Omega^{*}_{Y/X}(\log E/D),\nabla^{GM},Fil)
		$$
		be the logarithmic Gau{\ss}-Manin system of degree $w$ associated to $f$. Then
		$$
		(H,\nabla,Fil)=(\overline{\mathcal{V}},\overline{\nabla},\overline{Fil}).
		$$	
	\end{lemma}
	\begin{proof}
		See \cite[Remark 11.5, Corollary 11.19]{steen}.
	\end{proof}
	
	It is clear that Lemmata \ref{two def equ} and \ref{Deligne extension vs log Gauss-Manin} reduce the proof of Theorem \ref{app main theo} to the following statement.
	\begin{theorem}\label{E1 thm in char zero}
		Notation as in Lemma \ref{Deligne extension vs log Gauss-Manin}. Then, the spectral sequence of the filtered complex $(\Omega^*_{int}(H,\nabla),Fil)$ degenerates at $E_1$.
	\end{theorem}
	We shall use the spread-out technique to reduce Theorem \ref{E1 thm in char zero} to its corresponding statement in characteristic $p$ viz. Theorem \ref{E_1 deg}. The same idea applies to showing the following intersection adaptedness theorem in characteristic zero.
	\begin{theorem}\label{inter zero}
		Notation as in Theorem \ref{E1 thm in char zero}. Then the following equality holds:
		$$
		\Omega^*_{int}Gr_{Fil}(H,\nabla)=Gr_{Fil}\Omega^*_{int}(H,\nabla).
		$$
	\end{theorem}
	There is an important consequence of the previous results on the pure Hodge structure of the intersection cohomology groups of $\V$. The graded Higgs bundle $Gr_{Fil}(H,\nabla)$ reads
	$$
	(E=\bigoplus_{i+j=w} E^{i,j}=R^jf_{*}\Omega^i_{Y/X}(\log E/D),\theta=\bigoplus_{i+j=n}\theta^{i,j}),
	$$
	with $\theta^{i,j}: E^{i,j}\to E^{i-1,j+1}\otimes \Omega_X(\log D)$ the $i$-th Kodaira-Spencer morphism. Note that the Higgs complex $\Omega^*(E,\theta)$ is naturally graded:
	$$
	\Omega^*(E,\theta)=\bigoplus_{P=0}^{w+n}\Omega^*_{P}(E,\theta),~\Omega^i_P(E,\theta):=E^{P-i,w-P+i}\otimes \Omega_X^i(\log D).
	$$
	It induces a grading on $\Omega_{int}^*(E,\theta)$:
	$$
	\Omega_{int}^*(E,\theta)=\bigoplus_{P=0}^{w+n}\Omega^*_{int,P}(E,\theta), \  \Omega^*_{int,P}(E,\theta):=\Omega^*_{int}(E,\theta)\cap \Omega^*_{P}(E,\theta).
	$$
	The following result is a vast generalization of the complex Eichler-Shimura isomorphism for modular curves (see \cite[\S12]{Zuc}).
	\begin{corollary}
		Let $f: (Y,E)\to (X,D)$ be a semistable family over $\C$ and $\V=R^nf^{0,an}_*\Q_{Y^{0,an}}$. Then, the Hodge $(P,Q)$-component of the pure Hodge structure of weight $w+m$ on $IH^m(X^{an},\V)$ as given in Theorem \ref{pure HS} is naturally isomorphic to $\HH^{m}(X,\Omega^*_{int,P}(E,\theta))$.
	\end{corollary}
	\begin{proof}
		By Kashiwara-Kawai \cite{KK86, KK}, the middle perverse sheaf $\V_{\C}^{\pi}$ is quasi-isomorphic to the intersection complex $IC(X^{an},\V_{\C})$. Thus, by Lemma \ref{two def equ}, Theorems \ref{E1 thm in char zero}-\ref{inter zero} and the GAGA principle, it follows that
		\begin{eqnarray*}
			IH^{P,Q}(X^{an},\V_{\C})&\cong& Gr^P_{Fil}\HH^m(X^{an},\V_{\C}^{\pi})\\
			&\cong& Gr^P_{Fil}\HH^m(X^{an},IC(X^{an},\V_{\C}))\\
			&\cong& Gr^P_{Fil}\HH^m(X,\Omega^{*}_{int}(H,\nabla))\\
			&\cong& \HH^m(X,Gr^P_{Fil}\Omega^{*}_{int}(H,\nabla))\\
			&\cong& \HH^m(X,\Omega^{*}_{int}Gr^P_{Fil}(H,\nabla))\\
			&\cong& \HH^m(X,\Omega^{*}_{int,P}(E,\theta)).
		\end{eqnarray*}

	\end{proof}

	In the same vein, we get an algebraic proof of the following Kodaira type vanishing theorem for a VHS of geometric origin due to Saito (\cite[Proposition 2.33]{Saito}).
	\begin{corollary}
		Notation as Theorem \ref{inter zero}. Then, for any ample line bundle $L$ over $X$, it holds that
		$$
		\HH^i(X,\Omega^*_{int}(E,\theta)\otimes L)=0, \quad i>\dim X.
		$$
	\end{corollary}
	\begin{proof}
		Use the spread-out technique (as we shall explain in detail below). The result follows from Corollary \ref{vanishing result}.
	\end{proof}

	Now we proceed to the proofs of Theorems \ref{E1 thm in char zero}-\ref{inter zero}.
	\begin{proof}
		Write $\C$ as the inductive limit of its finitely generated sub $\Z$-algebras. Then by the standard argument (\cite[8, 11.2, 17.7]{EGA IV} ), there exists a sub $\Z$-algebra $A\subset \C$ of finite type and a semistable family $\mathfrak{f}: (\mathfrak Y, \mathfrak {E})\to (\mathfrak X, \mathfrak D)$ defined over $S=\Spec(A)$ such that $S$ is integral and regular, $\alpha: \mathfrak X\to S$ is smooth and projective, and $f$ is the base change of $\mathfrak f$ via $\Spec(\C)\to S$. Let $(\mathfrak{H},\nabla,\mathfrak{Fil})$ be the logarithmic Gau{\ss}-Manin system of degree $w$ associated to $\mathfrak{f}$, $(\mathfrak{E},\theta)=Gr_{\mathfrak{Fil}}(\mathfrak{H},\nabla)$. We claim that, after shrinking $S$ if necessary, the morphism $\mathfrak f$ satisfies the following additional properties:
		\begin{itemize}
			\item[(i)] For any closed point $s\in S$,
			$$\mathrm{char}(k(s))>N:=\mathrm{dim}(Y/X)+\mathrm{dim}(X)+\mathrm{rank}(H)+w.
			$$
			\item[(ii)] For any $i\geq 0$, the $\mathcal{O}_S$-modules $\R^i\alpha_*\Omega^*_{int}(\mathfrak H,\mathfrak \nabla)$ and $\R^i\alpha_*\Omega^*_{int}(\mathfrak E,\mathfrak \theta)$ are locally free of finite rank. The quotient $\sO_X$-modules $\Omega^\bullet(\mathfrak H,\mathfrak \nabla)/\Omega^\bullet_{int}(\mathfrak H,\mathfrak \nabla)$ and $\Omega^\bullet(\mathfrak E,\mathfrak \theta)/\Omega^\bullet_{int}(\mathfrak E,\mathfrak \theta)$ are flat over $S$.
			\item[(iii)] Let $S'\rightarrow S$ be the morphism given by either $\Spec(\C)\to S$ or $\Spec(k(s))\to S$ for a closed point $s\in S$. Let $f':(Y',E')\to(X',D')$ be the base change of $\mathfrak{f}$ via $S'\rightarrow S$. Then the pullbacks of $\Omega^*_{int}(\mathfrak{H},\nabla),\Omega^*_{int}(\mathfrak{E},\theta)$ and $Gr_{\mathfrak{Fil}}\Omega^*_{int}(\mathfrak{H},\nabla)$ to $X'$ are isomorphic to $\Omega^*_{int}(H',\nabla'),\Omega^*_{int}(E',\theta')$ and $Gr_{Fil'}\Omega^*_{int}(H',\nabla')$ respectively, where $(H',\nabla',Fil')$ is the logarithmic Gau{\ss}-Manin system of degree $w$ associated to $f'$ and $(E',\theta')=Gr_{Fil'}(H',\nabla')$.
			\item[(iv)] $Gr_{\mathfrak{Fil}}\Omega^*_{int}(\mathfrak{H},\nabla)=\Omega^*_{int}(\mathfrak{E},\theta)$.
			\item[(v)] Let $f_s$ be the fiber of $f$ over a closed point $s\in S$. Let $(H_s,\nabla_s,Fil_s)$ the logarithmic Gau{\ss}-Manin system of degree $w$ associated to $f_s$. Then $(H_s,\nabla_s,Fil_s)$ is one-periodic. \footnote{The one-periodicity refers to any $W_2(k(s))$-lifting of $X_s$, which is induced by a closed immersion $\Spec(W_2(k(s)))\to S$.}
		\end{itemize}
		Let us deduce first the theorems by assuming the claim. Note that Theorem \ref{inter zero} follows immediately by the properties (iii) and (iv). Set $(E_s,\theta_s)=Gr_{Fil_s}(H_s,\nabla_s)$. Then by Proposition \ref{cohomology and base change} and Theorem \ref{E_1 deg}, the following sequence of equalities hold, from which Theorem \ref{E1 thm in char zero} follows:
		\begin{eqnarray*}
			\dim_{\C} \HH^m(X, \Omega^*_{int}(H,\nabla))&=&\dim_{\C} R^m\alpha_*(\Omega^*_{int}(\mathfrak H,\mathfrak \nabla))\otimes_{A}\C\\
			&=&\dim_{k(s)}R^m\alpha_*(\Omega^*_{int}(\mathfrak H,\mathfrak \nabla))\otimes_Ak(s)\\
			&=&\dim_{k(s)} \HH^m(X_s, \Omega^*_{int}(H_s,\nabla_s))\\
			&=&\dim_{k(s)} \HH^m(X_s, \Omega^*_{int}(E_s,\theta_s))\\
			&=&\dim_{k(s)}R^m\alpha_*(\Omega^*_{int}(\mathfrak E,\mathfrak \theta))\otimes_Ak(s)\\
			&=&\dim_{\C} R^m\alpha_*(\Omega^*_{int}(\mathfrak E,\mathfrak \theta))\otimes_{A}\C\\
			&=&\dim_{\C} \HH^m(X, \Omega^*_{int}(E,\theta)).
		\end{eqnarray*}
		Among the five properties of the claim, (v) is the content of Proposition \ref{one-periodic}. (i) is obvious. The first part of (ii) follows from the finiteness theorem of Grothendieck which implies that  $\R^i\alpha_*\Omega^*_{int}(\mathfrak H,\mathfrak \nabla)$ and $\R^i\alpha_*\Omega^*_{int}(\mathfrak E,\mathfrak \theta)$ are coherent $\sO_S$-modules and hence locally free over some nonempty open subset of $S$. The second part of (ii) is the consequence of Theorem of generic flatness (\cite[Th\'{e}or\`{e}me 6.9.1]{EGA IV}). This step is necessary, since components of the intersection de Rham/Higgs complex are
		\emph{not} locally free in general (though $H$ are $E$ are locally free), which is in contrast with the logarithmic de Rham/Higgs complex.

		By Deligne-Illusie \cite{DI}, Illusie \cite{IL02}, for any $i,j$, $$R^{j}\mathfrak{f}_{*}\Omega^{i}_{\mathfrak{Y}/\mathfrak{X}}(\log \mathfrak{E}/\mathfrak{D}),~ R^{i+j}\mathfrak{f}_{*}\Omega^{*}_{\mathfrak{Y}/\mathfrak{X}}(\log \mathfrak{E}/\mathfrak{D})$$ are locally free of finite type, and the spectral sequence
		\begin{eqnarray}\label{int E1}E^{i,j}_1=R^{j}\mathfrak{f}_{*}\Omega^{i}_{\mathfrak{Y}/\mathfrak{X}}(\log \mathfrak{E}/\mathfrak{D})\Rightarrow R^{i+j}\mathfrak{f}_{*}\Omega^{*}_{\mathfrak{Y}/\mathfrak{X}}(\log \mathfrak{E}/\mathfrak{D})\end{eqnarray}
		degenerates at $E_1$. The corresponding statements of (iii) for $\Omega^*(\mathfrak H, \nabla)$ and $\Omega^*(\mathfrak E, \theta)$ follow from the above fact whose proofs are standard.  The point $\Spec(\C)\to S$ is the geometric generic point of $S$. So by Proposition \ref{cohomology and base change} (i), (iii) for the base change via $\Spec(\C)\to S$ follows. Using the $S$-flatness condition in (ii), one obtains (iii) for a closed point $s\in S$ by Proposition \ref{cohomology and base change} (ii).

		By Lemma \ref{inclusion of intersection subcomplex} and by Corollary \ref{multiweight Higgs}, we have the inclusion of coherent $\sO_{\mathfrak X}$-modules:
		$$\Omega^\bullet_{int}Gr_{\mathfrak{Fil}}(\mathfrak{H},\nabla)\subset Gr_{\mathfrak{Fil}}\Omega^\bullet_{int}(\mathfrak{H},\nabla).$$ Let $\mathfrak{Q}^\bullet$ be the cokernel of the inclusion. By generic flatness, we may assume $\mathfrak{Q}^{\bullet}$ flat over $S$. As $\mathfrak{Q}^{\bullet}$ is coherent and $f$ is proper, it suffices to show  $\mathfrak{Q}^{\bullet}\otimes k(s)=0$ for any closed point $s\in S$. By the base change property (iii) and Theorem \ref{inter comm} using (v), it follows that $\mathfrak{Q}^{\bullet}\otimes k(s)=0$. Thus (iv) follows.
	\end{proof}

\section{Appendix}
In this appendix, we show the existence of an $\infty$-homotopy as claimed in Theorem \ref{infty homotopy}. The whole section is divided into three parts.

\subsection{Higgs-de Rham ring}
Let $R$ be any commutative ring with identity. Let $A$ be the polynomial algebra over $R$ with the following three types of indeterminate:
\begin{itemize}
\item $\theta _l, ~l=1,2,\cdots$;
\item $e_I,~I\subset \mathbb \Z_{>0},|I|<\infty$;
\item $h_{k,l},~k,l=1,2,\cdots$.
\end{itemize}
Let $\mathfrak a$ be the ideal generated by indeterminate of the second type, and let $\bar A$ be the quotient ring $A/\mathfrak a^2$. Let $M$ be the free $\bar A$-module generated by
\begin{itemize}
\item $\zeta_{k,l},k=0,1,~\cdots,~l=1,2,\cdots$.
\end{itemize}
Recall that the exterior algebra $B_0:=\bigwedge_{\bar A}(M)$ is defined to be the quotient of the tensor algebra $T_{\bar A}(M)$ by the two-sided ideal generated by all expressions $x\otimes x$ for $x\in M$. So $B_0=\bigoplus_n\bigwedge^n(M)$ is a skew commutative graded $\bar A$-algebra.
\begin{definition}
Let $\mathcal I\subset B_0$ be the two-sided ideal generated by $\{\theta_i,e_I,h_{k,l},\zeta_{k,l}\}$, and for $s\in \N$,
$\mathcal J_s$ be the two-sided ideal generated by $\theta_i,e_I,h_{k,l},\zeta_{k,l}$ with $i,|I|,k+l\geq s$. Let $B_1$ to be the completion of $B_0$ with respect to the decreasing family of two-sided ideals $\{\mathcal I^s+\mathcal J^s\}_{s}$. Finally, we define
 $$B_{\mathrm{HdR}}:=B_1[\theta_i^{-1},i\geq1].$$
 We call it the Higgs-de Rham ring over $R$. To emphasize that this ring is defined over $R$, we add a superscript $R$, i.e. $B^R_{\mathrm{HdR}}$. When $R=\mathbb Q$, we simply write it as $B_{\mathrm{HdR}}$.
\end{definition}
 For each positive number $m$, let $\mathfrak b^R_m$ be the closure of the two-sided ideal  generated by monomials in $h_{k,l}, \zeta_{k,l}$ with total power $\geq m$. The quotient ring $B^R_{\mathrm{HdR},m}:=B^R_{\mathrm{HdR}}/\mathfrak{b}^R_m$ will be important for later use. Note that there is a natural decomposition of $R$-modules
\begin{eqnarray}\label{decomposition B}
B^R_{\mathrm{HdR}}=B^R_{\mathrm{HdR},m}\bigoplus\mathfrak{b}_m^R.
\end{eqnarray}
In fact, any element in $B^R_{\mathrm{HdR}}$ can be uniquely expressed as the sum of an  $R$-linear form combination of monomials in $\theta_l,e_I,h_{k,l},\zeta_{k,l}$ such that the total power in $h_{k,l},\zeta_{k,l}$ is $<m$ and an  $R$-linear form combination of monomials in $\theta_l,e_I,h_{k,l},\zeta_{k,l}$ such that the total power in $h_{k,l},\zeta_{k,l}$ is $\geq m$. Set $B^R_{\mathrm{HdR},f}:=\bigcup_mB^R_{\mathrm{HdR},m}$.

\subsection{An initial value problem and its solution}
In this subsection, our discussion is over $\mathbb Q$.
Let $d: B_0\to B_0$ be the additive map determined by the following rules:
$$
d\theta_i=de_I=d\zeta_{k,l}=0,~dh_{k,l}=\zeta_{k,l}-\zeta_{k-1,l},~d(xy)=(dx)y+(-1)^{n}xdy,
$$
for $x\in \bigwedge^n(M)$. Clearly, $d$ is continuous with the topology defined by $\{\mathcal I^s+\mathcal J^s\}_{s}$. Thus it extends uniquely an additive continuous operator on $B_1$, which is also denoted by $d$. We regard the element  $\sum_{i=1}^\infty\theta_i\zeta_{0,i}$ as an operator on $B_1$ by left multiplication. Set
$$
\nabla=d+\sum_{i=1}^\infty\theta_i\zeta_{0,i}.
$$
Clearly, $\nabla^2=0$.

Let $\Theta: B_0\to B_0$ be the differential operator on $B_0$ determined by
$$
\Theta\theta_i=\Theta h_{k,l}=\Theta \zeta_{k,l}=0,
$$
$\Theta e_{\emptyset}=0$, and for $I\neq \emptyset$,
$$
\Theta e_I=\sum_{1\leq k\leq s}(-1)^{k-1}e_{\{\cdots,\hat{i_k},\cdots\}},~I=\{i_1,\cdots,i_s\},~ i_1<\cdots<i_s.
$$
It is easy to verify $\Theta^2=0$. Again, $\Theta$ extends uniquely an additive continuous operator over $B_1$, and it satisfies $\Theta^2=0$.
For each $s\in \N$, we define an additive continuous operator $\mathrm{Shift}_s$ as follows: For $s=0$, $\mathrm{Shift}_0$ is determined by
$$
\theta_i\mapsto\theta_i,~e_I\mapsto e_I,~h_{k,l}\mapsto h_{k+1,l},~\zeta_{k,l}\mapsto\zeta_{k+1,l}.
$$
For $s>0$, $\mathrm{Shift}_s$ is determined by
$$
\theta_l\mapsto\theta_l,~e_I\mapsto e_I,~h_{k,l}\mapsto\left\{\begin{matrix}h_{k,l},~k<s\\
 h_{s,l}+h_{s+1,l},~k=s\\
 h_{k+1,l},~k>s
\end{matrix}\right.,~\zeta_{k,l}\mapsto\left\{\begin{matrix}
\zeta_{k,l},~k<s\\
\zeta_{k+1,l},~k\geq s.
\end{matrix}\right.
$$
 We linearly extend $\nabla, \Theta, \mathrm{Shift}_s, s\in \N$ to $B_{\mathrm{HdR}}$. The symbol $\mathrm{exp}(\sum_{l=0}^\infty\theta_l h_{1,l})$ is a well-defined element
$$
1+\sum_{l=0}^\infty\theta_l h_{1,l}+\frac{1}{2}(\sum_{l=0}^\infty\theta_l h_{1,l})^2+\cdots
$$
in $B_{\mathrm{HdR}}$, and we regard it as an operator on $B_{\mathrm{HdR}}$ by left multiplication. Set
$$
\delta_0:=\mathrm{exp}(\sum_{l=0}^\infty\theta_l h_{1,l})\mathrm{Shift}_0,\quad \delta_s:=\mathrm{Shift}_s, \ s>0.
$$
Finally, we define the operator $D:\mathrm{Hom}_{\mathrm{Set}}(\mathbb Z\times\mathbb Z,B_{\mathrm{HdR}})\to\mathrm{Hom}_{\mathrm{Set}}(\mathbb Z\times\mathbb Z,B_{\mathrm{HdR}})$ by sending $\varphi=\{\varphi(r,s)\}\in \mathrm{Hom}_{\mathrm{Set}}(\mathbb Z\times\mathbb Z,B_{\mathrm{HdR}})$
to
$$
(r,s)\mapsto\nabla\varphi(r,s-1)+(-1)^{s}\sum^r_{l=0}(-1)^l\delta_l\varphi(r-1,s)-\Theta\varphi(r,s)).
$$
\begin{theorem}\label{solvability}
The following two-dimensional discrete initial value problem over $B_{\mathrm{HdR}}$ is solvable:
\begin{eqnarray}\label{initial problem}
\left\{\begin{matrix}
D\varphi=0,~\varphi\in\mathrm{Hom}_{\mathrm{Set}}(\mathbb Z\times\mathbb Z,B_{\mathrm{HdR}});\\
\varphi(0,s)=\sum_{I\subset\mathbb N_{>0},|I|=s}e_I\zeta_{0,I},~s\geq0;\\
\varphi(r,s)=0,~r<0~\mathrm{or}~s<0.
\end{matrix}\right.
\end{eqnarray}
Here $\zeta_{0,\emptyset}=1$, and $\zeta_{0,I}:=\zeta_{0,i_1}\cdots\zeta_{0,i_s}$ for $I=\{i_1,\cdots,i_s\}$ with $i_1<\cdots<i_s$.
\end{theorem}
In the following, we shall give an \emph{explicit} solution to the initial value problem. There might be a proof showing the abstract existence of such a solution (via a certain obstruction theory). But it will not suffice for our purpose. This is mainly because the intersection condition is given in terms of local coordinates. We need to show our solution preserves the intersection condition. Also, that our solution can be reduced modulo $p$ is quite essential.
 \begin{itemize}
\item[$\overline i$\ ] $\{i^k\}_{k\in\mathbb Z_{>0}}$ such that $i_k\geq0$ and there are only finitely many $k\in\mathbb Z_{>0}$ such that $i^k>0$.

\item[$\underline i$\ ] $\{i_l\}_{l\in\mathbb Z_{>0}}$ such that $i_l\geq0$ and there are only finitely many $l\in\mathbb Z_{>0}$ such that $i_l>0$.
\item[$\underline s$\ ] $\{s_l\}_{l\in\mathbb Z_{\geq0}}$ such that each $s_l\geq0$ and $s_l>0$ holds only for finitely many $k$.

\item[$\underline{\underline j}$\ ] It is a family $\{j_{k,l}\}_{(k,l)\in\mathbb Z_{>0}\times\mathbb Z_{>0}}$ such that $j_{k,l}\geq0$ holds for any pair $(k,l)$ and $j_{k,l}>0$ holds for only finitely many pair $(k,l)$.

\item[$\underline{\underline{j}}_{\overline{i}}$] $\prod_{k}j_{k,i^{k}}$, here $j_{k,0}:=1$.

\item[$\theta_{\overline i}$]  $\prod_{k}\theta_{i^k}$, here $\theta_0:=1$.

\item[$\theta^{\underline{\underline j}}$] $\prod_{l}\theta_{l}^{\sum_{k}j_{k,l}}$.

\item[$e_{\underline i,\overline i}$\ ] Let $\underline I:=\{i_l>0|l\in\mathbb Z_{>0}\}$ and $\overline I:=\{i^k>0|k\in\mathbb Z_{>0}\}$. Assume $\underline I=\overline I=\emptyset$. Set $e_{\underline i,\overline i}:=e_\emptyset$. Assume $\underline I\neq\emptyset,~\overline I\neq\emptyset$. Set
$$
\underline I=\{i_{l_1},\cdots,i_{l_s}\}~\mathrm{with}~l_1<\cdots<l_s,~\overline I=\{i^{k_1},\cdots,i^{k_r}\}~\mathrm{with}~k_1<\cdots<k_r.
$$
If $|\underline I\cup\overline I|<r+s$, then set $e_{\underline i,\overline i}:=0$. If $|\underline I\cup\overline I|=r+s$, then set
$$
e_{\underline i,\overline i}:=\mathrm{sgn}(\sigma)e_{\underline I\cup\overline I},
$$
where $\sigma$ is a permutation of $\{1,\cdots,r+s\}$ such that
$$
i'_{\sigma(1)}<\cdots<i'_{\sigma(r+s)},~(i'_1,\dots,i'_{r+s}):=(i_{l_1},\cdots,i_{l_s},i^{k_1},\cdots,i^{k_r}).$$
The remaining two cases $\underline I\neq\emptyset,\overline I=\emptyset$ and $\underline I=\emptyset,\overline I\neq\emptyset$ can be discussed similarly.

\item[$h^{[\underline{\underline j}]}$] $\prod_{k,l}\frac{h_{k,l}^{j_{k,l}}}{j_{k,l}!}$.

\item[$\zeta_{\underline s,\underline i}$] For $s=0$, set $\zeta_{\underline s,\underline i}:=1$. Otherwise, write the set of nonzero entries of $\underline s$ by $\{k_1,\cdots,k_q\}$ with $k_1<\cdots<k_q$, and set
$\zeta_{\underline s,\underline i}:=\zeta_{l_1,i_1}\cdots\zeta_{l_s,i_s}$, where the sequence $l_1,\cdots,l_s$ is $s_{k_1}$-times $k_1$, $\cdots$, $s_{k_q}$-times $k_q$.
\item[$a(\underline{\underline j},\underline s)$] $\frac{\prod_ls_l!}{\prod^\infty_{p=1}\prod^{s_{p-1}}_{q=0}\mathrm{max}\{1,q+\sum_{k\geq p,l}j_{k,l}+\sum_{l\geq p}s_l\}}$.
\item[$T(r)$\ ] $\{i^k\}_{k\in\mathbb Z_{>0}}$ such that $i^k>0$ for $k\leq r$ and $i^k=0$ for $k>r$.
\item[$\mathbb Z^{\underline s}_{>0,\uparrow}$\ ] Let $s:=\sum_l s_l>0$. If $s=0$, then put $\mathbb Z^{\underline s}_{>0,\uparrow}$ to be the set of single sequence $\underline 0=\{0\}_{l\in\mathbb Z_{>0}}$. Assume $s>0$.
Let $\{l|s_l>0\}=\{l_1,\cdots,l_q\}$ with $l_1<\cdots<l_q$. Put $\mathbb Z^{\underline s}_{>0,\uparrow}$ to be the set of sequences
$$
i^1_1,\cdots,i^1_{s_{l_1}},\cdots,i^q_1,\cdots,i^q_{s_{l_q}},0,\cdots,0,\cdots
$$
satisfying $i^1_1<\cdots<i^1_{s_{l_1}},\cdots,i^q_1<\cdots<i^q_{s_{l_q}}$.

\item[$T(r,s)$\ ] For any $r,s\geq0$, let $T(r,s)$ be the set consisting of triples $(\underline s,\underline i,\underline{\underline j})$ satisfying the following conditions:
\begin{itemize}
\item $j_{k,l}=s_k=0$ for $k>r$;
\item $\underline i\in\mathbb Z^{\underline s}_{>0,\uparrow}$.
\end{itemize}

\end{itemize}

\begin{proof}
Let $r,s\geq 0$, we define
\begin{eqnarray}\label{varphi construction}
\varphi_\infty(r,s):=\sum_{(\underline s,\underline i,\underline{\underline j})\in T(r,s)}[\sum_{\overline i\in T(r)}a(\underline{\underline j},\underline s)\underline{\underline j}_{\overline i}\theta^{-1}_{\overline i}\theta^{\underline{\underline j}}e_{\underline i,\overline i}]h^{[\underline{\underline j}]}\zeta_{\underline s,\underline i}.
\end{eqnarray}
The verification detail that $\varphi_\infty=\{\varphi_\infty(r,s)\}$ is a solution of \ref{initial problem} is contained in \cite{SZ}.
\end{proof}

\subsection{The $\infty$-homotopy}
 Let $\varphi_m$ be the natural reduction of $\varphi_\infty$ to $B_{\mathrm{HdR},m}$, which is viewed as an element of $\mathrm{Hom}_{\mathrm{Set}}(\mathbb Z\times\mathbb Z,B_{\mathrm{HdR}})$ using \eqref{decomposition B}. Clearly, we have
$$
\varphi_m(r,s)\to\varphi_\infty(r,s),~m\to\infty.
$$
Let $m=p$ be a prime number. By the very construction of $\varphi$ \eqref{varphi construction}, under the natural projection
$$\mathrm{Hom}_{\mathrm{Set}}(\mathbb Z\times\mathbb Z,B_{\mathrm{HdR}})\to\mathrm{Hom}_{\mathrm{Set}}(\mathbb Z\times\mathbb Z,B_{\mathrm{HdR},p}),$$
$\{\varphi_p(r,s)\}_{(r,s)}$ are contained in
$$\mathrm{Hom}_{\mathrm{Set}}(\mathbb Z\times\mathbb Z,B_{\mathrm{HdR},p}^{\mathbb Z_{(p)}}
)\subset \mathrm{Hom}_{\mathrm{Set}}(\mathbb Z\times\mathbb Z,B_{\mathrm{HdR},p}).$$
Note that the components $\nabla,\Theta,\delta_l$ defining the operator $D$ naturally pass on $B^{\mathbb Z_{(p)}}_{\mathrm{HdR},p}$, which are denoted by $\nabla_p,\Theta_p,(\delta_l)_p$ respectively. Hence $D$ induces the operator $D_p$ on $\mathrm{Hom}_{\mathrm{Set}}(\mathbb Z\times \mathbb Z,B_{\mathrm{HdR},p}^{\mathbb Z_{(p)}})$. Certainly, $D_p\varphi_p=0$. Written explicitly, it means the following equality
\begin{eqnarray}\label{reduction p}
\qquad(-1)^s\nabla_p\varphi_p(r,s)+(-1)^{s+1}\Theta_p\varphi_p(r,s+1)=\sum^r_{l=0}(-1)^l(\delta_l)_p\varphi_p(r-1,s+1).
\end{eqnarray}

Next, we shall realize \ref{reduction p} as an $\infty$-homotopy between the Higgs complex associated to $(E,\theta)$ and the Frobenius pushforward of the de Rham complex associated to its inverse Cartier transform. Suppose $\ell <p$. Let $\mathrm{Frame}$ be the frame bundle of $\Omega_{X'_{\log}/k}$, and $\sL=F_*\sL_{\sX/\sS}$. Let $U\subset X$ be an open subset and $r$ an non-negative integer. Fix
 \begin{itemize}
 \item a frame $(\omega'_1,\cdots,\omega'_n)\in\mathrm{Frame}(U)$;
 \item an $r+1$-tuple $(\tilde F_0,\cdots,\tilde F_r)$, where $\tilde F_i$ is a logarithmic Frobenius lifting over $\tilde U$;
 \item a section of $\tau_{<p-\ell}\Omega^\bullet(E,\theta)$ over $U'$ given by (where $e,e_{i_1,\cdots,i_q}\in E(U')$)
 $$
 e+\sum_{0<q<p-\ell}\frac{1}{q!}\sum_{1\leq i_1,\cdots,i_q\leq n}e_{i_1,\cdots,i_q}\otimes\omega_{i_1}'\wedge\cdots\wedge\omega_{i_q}'.
 $$
 \end{itemize}
We define a ring homomorphism
 $$
 B^{\mathbb Z_{(p)}}_{\mathrm{HdR},f}\to\Gamma(U',R(E,\theta)\otimes_{\sO_{X'}} F_*\Omega^\bullet_{X_{\log}/k})$$
 as follows:
 \begin{itemize}
 \item $\theta_i\mapsto\vartheta_i\otimes 1$ for $i\leq n$ and $\theta_i\mapsto0$ for $i>n$, where $\theta=\sum_{i=1}^n\vartheta_i\otimes\omega_i'$;
 \item $e_\emptyset\mapsto e\otimes 1$, $e_I\mapsto e_{i_1,\cdots,i_q}\otimes 1$ for $I=\{i_1,\cdots,i_q\}$ with $i_1<\cdots<i_q\leq n$ and $e_I\mapsto 0$ otherwise;
 \item $h_{k,l}\mapsto 1\otimes h_{\tilde F_{k-1}\tilde F_{k}}(\omega_l)$ for $k\leq r$ and $l\leq n$ and $0$ otherwise;
 \item $\zeta_{k,l}\mapsto1\otimes \zeta_{\tilde F_k}(\omega_l)$ for $k\leq r$ and $l\leq n$, and $0$ otherwise.
 \end{itemize}
Regard $B^{\mathbb Z_{(p)}}_{\mathrm{HdR},f}$ as a constant sheaf on $X'$. Then the above construction gives rise to a morphism of sheaves on $X'$:
  $$\mathrm{Frame}\times\bigsqcup_{r\geq0}\mathcal L^{r+1}\times\tau_{<p-\ell}\Omega^\bullet(E,\theta)\to\mathcal Hom(B^{\mathbb Z_{(p)}}_{\mathrm{HdR},f},R(E,\theta)\otimes_{\sO_{X'}} F_*\Omega^\bullet_{X_{\log}/k}).
  $$
We shall take the component of the previous morphism along the projection
$$
 R(E,\theta)\otimes_{\sO_{X'}} F_*\Omega^\bullet_{X_{\log}/k}\to E\otimes_{\sO_{X'}}  F_*\Omega^\bullet_{X_{\log}/k}.
$$
Now return to the setting of \S3.2. Recall that $(H,\nabla)=C^{-1}(E,\theta)$, the inverse Cartier transform of $(E,\theta)$. There are natural identifications
$$
E_{U'}\otimes F_*\Omega^\bullet_{U_{\log}/k}\cong F_*\Omega^\bullet(H_{\tilde F_0},\nabla_{\tilde F_0})\cong F_*\Omega^\bullet(H,\nabla)_U.
$$
So the above discussion provides us the following morphism
$$
\mathrm{Frame}\times\bigsqcup_{r\geq0}\mathcal L^{r+1}\times\tau_{<p-\ell}\Omega^\bullet(E,\theta)\to\mathcal Hom(B^{\mathbb Z_{(p)}}_{\mathrm{HdR},f},F_*\Omega^\bullet(H,\nabla)),
$$
which induces the following evaluation morphism
$$
\mathrm{ev}: \mathrm{Frame}\times\bigsqcup_{r\geq0}\mathcal L^{r+1}\times B^{\mathbb Z_{(p)}}_{\mathrm{HdR},f}\to \mathcal Hom_{\mathcal O_{X'}}(\tau_{<p-\ell}\Omega^\bullet(E,\theta),F_*\Omega^\bullet(H,\nabla)).
$$

\begin{lemma}\label{indep frame}
For $r,s\geq 0$, the induced morphism $\mathrm{ev}(-,-,\varphi_p(r,s))$
$$
\mathrm{Frame}\times\mathcal L^{r+1}\to\mathcal Hom_{\mathcal O_{X'}}(\tau_{<p-\ell}\Omega^{r+s}(E,\theta),\tau_{<p-\ell}F_*\Omega^s(H,\nabla)),
$$
is independent of the first factor $\mathrm{Frame}$. Consequently, we obtain a well-defined morphism
$$
\mathrm{ev}(-,\varphi_p(r,s)):\mathcal L^{r+1}\to\mathcal Hom_{\mathcal O_{X'}}(\tau_{<p-\ell}\Omega^{r+s}(E,\theta),\tau_{<p-\ell}F_*\Omega^s(H,\nabla)).
$$
 \end{lemma}
 \begin{proof}
 This lemma follows from the strong symmetry of $\varphi_p(r,s)$, see \cite{SZ} for details.
 \end{proof}

\begin{theorem}\label{appendix construction Ho}
Let $\mathcal X/\mathcal S, (E,\theta), (H,\nabla),\sL$ be as in Theorem \ref{dec thm}. Then $\{\mathrm{Ho}^r\}_{r\geq 0}$ with
$$
\mathrm{Ho}^r=\mathrm{ev}(-,\varphi_p(r,-)):\mathcal L^{r+1}\to\mathcal Hom^{-r}(\tau_{<p-\ell}\Omega^*(E,\theta),\tau_{<p-\ell}F_*\Omega^*(H,\nabla))
$$
defines an $\mathcal L$-indexed $\infty$-homotopy from $\tau_{<p-\ell}\Omega^*(E,\theta)$ to $\tau_{<p-\ell}F_*\Omega^*(H,\nabla)$.
\end{theorem}
\begin{proof}
The theorem is nothing but \eqref{reduction p} under the evaluation morphism. However, we should take care of the nilpotent condition on $\theta$. Applying $\mathrm{Ho}^r(\tilde F_0,\cdots,\tilde F_r;-)$ to both sides of \eqref{reduction p}   yields
\begin{eqnarray}\label{LHS infty}
\quad\mathrm{ev}(\tilde F_0,\cdots,\tilde F_r;(-1)^s\nabla_p\varphi_p(r,s))+\mathrm{ev}(\tilde F_0,\cdots,\tilde F_r;(-1)^{s+1}\Theta_p\varphi_p(r,s+1))\end{eqnarray}
and
\begin{eqnarray}\label{RHS infty}
\mathrm{ev}(\tilde F_0,\cdots,\tilde F_r;\sum^r_{k=0}(-1)^k(\delta_k)_p\varphi_p(r-1,s+1)),
\end{eqnarray}
respectively. Claim that
\begin{eqnarray}\label{varphi nabla evaluating}
\mathrm{ev}(\tilde F_0,\cdots,\tilde F_r;(-1)^s\nabla_p\varphi_p(r,s))=(-1)^s\nabla\mathrm{ev}(\tilde F_0,\cdots,\tilde F_r;\varphi_p(r,s)).
\end{eqnarray}
Indeed, there is an obvious equality
$$
\mathrm{ev}(\tilde F_0,\cdots,\tilde F_r;(-1)^s\nabla\varphi_p(r,s))=(-1)^s\nabla\mathrm{ev}(\tilde F_0,\cdots,\tilde F_r;\varphi_p(r,s)).
$$
Since the difference $\nabla\varphi_p(r,s)-\nabla_p\varphi_p(r,s)$ belongs to $\mathfrak b_p$, it can be expressed as an $\mathbb F_p$-linear formal combination of elements of form
$$\underline{\underline j}_{\overline i}\theta^{-1}_{\overline i}\theta^{\underline{\underline j}}\theta_le_{\underline i,\overline i}h^{[\underline{\underline j}]}\zeta_{0,l}\zeta_{\underline s,\underline i},~\overline i\in T(r),~l\geq1,~\sum_{k,l} j_{k,l}+\sum_k s_k=p-1.$$
Note the total power of $\theta_l$ in any element above is not less than $p-(r+s)$. Therefore, as $(E,\theta)$ is nilpotent of level $\leq \ell$, we have for $r+s<p-\ell$
$$
\mathrm{ev}(\tilde F_0,\cdots,\tilde F_r;(-1)^s(\nabla-\nabla_p)\varphi_p(r,s))=0.
$$
The claim follows. Clearly, one has
$$
\mathrm{ev}(\tilde F_0,\cdots,\tilde F_r;(-1)^{s+1}\Theta_p\varphi_p(r,s+1))=(-1)^{s+1}\mathrm{ev}(\tilde F_0,\cdots,\tilde F_r;\varphi_p(r,s+1))\theta.
$$
This shows that \eqref{LHS infty} equals
$$
(d_{\mathcal Hom^*}\mathrm{Ho}^r)_{s+1}(\tilde F_0,\cdots,\tilde F_r),
$$
where $(d_{\mathcal Hom^*}\mathrm{Ho}^r)_{s+1}$ is one of the components of $d_{\mathcal Hom^*}\mathrm{Ho}^r$. By a similar argument, \eqref{RHS infty} becomes
$$
(\delta\mathrm{Ho}^{r-1})_{s+1}(\tilde F_0,\cdots,\tilde F_r),
$$
where
$$(\delta\mathrm{Ho}^{r-1})_{s+1}:\mathcal L^r\to\mathcal Hom_{\mathcal O_{X'}}(\tau_{<p-\ell}\Omega^{r+s}(E,\theta),\tau_{<p-\ell}F_*\Omega^{s+1}(H,\nabla))$$
is one of the components of $\delta\mathrm{Ho}^{r-1}$.
Consequently, one has
$$
d_{\mathcal Hom^*}\mathrm{Ho}^r=\delta\mathrm{Ho}^{r-1},$$
from which this theorem follows.
 \end{proof}
This completes the first half of Theorem \ref{dec thm}. To handle the remaining part, we need to show further that the so-constructed $\infty$-homotopy preserves the intersection condition. Clearly, the problem now is a local one. So we may assume that $X$ admits an admissible system of local coordinates $t=(t_1,\cdots,t_n)$. It suffices to show the following
\begin{claim}\label{truncation ev}
$\mathrm{ev}((\tilde F_0,\cdots,\tilde F_r),\varphi_p(r,s))$ in Lemma \ref{indep frame} restricts to a morphism
\begin{eqnarray}
\tau_{<p-\ell}\Omega_{int}^{r+s}(E,\theta)\to\tau_{<p-\ell}F_*\Omega_{int}^s(H,\nabla).
\end{eqnarray}
\end{claim}
In the following, we shall fix the identification
$$
(H,\nabla)=(H_{\tilde{F}_0},\nabla_{\tilde{F}_0}).
$$
Via the local coordinate $t$, we write
$$
\nabla=\sum_{i=1}^n\nabla_i\otimes \omega_i,
$$
where $\omega_i=d\log t_i$, as we did in \S2.1. Recall that for $w\in \N^n$, we defined the submodule of $H$
$$
H_{w}=\sum_{\alpha+\beta=w,\alpha,\beta\in\mathbb N^n}t^\alpha\nabla^\beta H.
$$
\begin{lemma}\label{H_w general}
For any $w=(w_1,\cdots,w_n)\in\{0,1\}^n$, we have
\begin{eqnarray}\label{theta=int}
H_{w}=\sum_{\alpha+\beta=w,\alpha,\beta\in\mathbb N^n}t^\alpha(F^*\theta)^\beta H,
\end{eqnarray}
where $(F^*\theta)^\beta=\prod_{i=1}^n(F^*\theta_i)^{\beta_i}$ with $\beta=(\beta_1,\cdots,\beta_n)$.
	\end{lemma}
		
\begin{proof}
Set
$$
H^w:=\sum_{\alpha+\beta=w,\alpha,\beta\in\mathbb N^n}t^\alpha(F^*\theta)^\beta H.
$$
We do induction on $|w|=\sum_{i=1}^nw_i$. The lemma holds trivially for $|w|=0$, i.e. $w=0$. We assume the truth for $|w|=N-1$. Without loss of generality, we take $w\in\{0,1\}^n$ with
$$
w=(1,0,\cdots,0)+w', \quad |w'|=N-1.
$$
For each $i$, there are $\{g_{ij}\}\subset \Gamma(X,\mathcal O_X)$ such that
$$
\nabla_i=F^*\theta_i+(\nabla_{\mathrm{can},i}+t_i\sum_{j=1}^ng_{ij}F^*\theta_j).
$$
where $\nabla_{\mathrm{can},i}(\lambda F^*e)=t_i\frac{\partial \lambda}{\partial t_i}F^*e$ for $\lambda\in\mathcal O_X,e\in E$.
Note that  for $s\neq i$, we have the following relations in $\mathrm{End}_k(H)$:
$$
[\nabla_{\mathrm{can},s},t_i]=[\nabla_{\mathrm{can},s},F^*\theta_i]=0.$$
Then it follows that
$$
\begin{array}{rcl}
H^w&\subset&t_1H_{w'}
+(\nabla_1H_{w'}-\nabla_{\mathrm{can},1}H^{w'}-(t_1\sum_{j=1}^ng_{1j}F^*\theta_j)H^{w'})\\
&\subset& H_w+t_1H^{w'}
= H_w+t_1H_{w'}= H_w.
\end{array}
$$
By symmetry, $H_w\subset H^w$. The lemma follows.
\end{proof}	
\begin{corollary}\label{infty homotopy intersection}
Notations as in Theorem \ref{appendix construction Ho}. Then $\{Ho^r\}_{r\geq 0}$ restricts to an $\mathcal L$-indexed $\infty$-homotopy from $\tau_{<p-\ell}\Omega_{int}^*(E,\theta)$ to $\tau_{<p-\ell}F_*\Omega_{int}^*(H,\nabla)$.
\end{corollary}
\begin{proof}
In order to show claim \eqref{truncation ev}, it suffices to consider the case of $r+s<p$ and $s>0$. Note that $\mathrm{ev}((\tilde F_0,\cdots,\tilde F_r),\varphi_p(r,s))$ always extends(=without truncation) to a morphism
$$
\mathrm{ev}((\tilde F_0,\cdots,\tilde F_r),\varphi_p(r,s)):\Omega^{r+s}(E,\theta)\to F_*\Omega^s(H,\nabla).
$$
By Lemma \ref{H_w general}, it follows that for $\beta=\omega_I\in P^{r+s}$, the evaluation morphism sends any section of $E_{w(\beta)}\otimes\beta$ to a section of
$$
\sum H_{w(\beta)}\otimes\zeta_{\tilde F_{l_1}}(\omega_{i_1})\wedge\cdots\zeta_{\tilde F_{l_s}}(\omega_{i_s}),
$$
where the sum takes over $0\leq l_1,\cdots,l_s\leq r,i_1,\cdots,i_s\in I$. Clearly, the latter subsheaf is contained in $F_*\Omega^s_{int}(H,\nabla)$. Hence \eqref{truncation ev} follows from Proposition \ref{multiweight Higgs}.
\end{proof}

\end{document}